%% file: main.tex
\documentclass[onefignum,onetabnum]{siamart250211}

\input{preamble}

\headers{Restricted Dynamic Geometric Complexity}{Z. Li}

\title{Restricted Dynamic Geometric Complexity:
Path-Space Reduction and M\"obius--Jacobi Response}

\author{Zavier Li\thanks{Xidian University, Xi'an, China
  (\email{zavierli888@gmail.com}).}}

\begin{document}
\maketitle

\begin{abstract}
Structured preconditioners restrict optimization to a small family of
positive metrics, but endpoint condition-number reachability does not measure
the geometric effort required to reach a useful metric.  We formulate this
effort as a path-space value problem.  Restricted dynamic geometric complexity
is the least affine-invariant length of an admissible metric path whose
endpoint reaches a Hessian-relative generalized-eigenvalue condition target.
Path elimination gives an exact min-plus semigroup and Bellman principle,
while fixed-horizon kinetic energy is exactly squared complexity divided by
twice the horizon.  Explicit Tonelli and compact-control realizations supply
existence and a conditional Hamilton--Jacobi layer.  The main response result
is global on a Hadamard state space: geodesic convexity produces a smooth
intervention-cube path branch and a uniformly coercive Jacobi form, while one
Green inverse generates the value Hessian, two-sided force-to-curvature
bounds, exact M\"obius effects, and arbitrary prescribed finite-order
responses.  For the hard condition
target, a bordered Jacobi--KKT theorem differentiates the moving projection
endpoint and multiplier on every regular active spectral stratum; its
indefinite inverse also explains why hard-target interactions need not share
the unconstrained sign.  The theory specializes to affine-invariant
positive-definite geometry.
A determinant-one two-dimensional diagonal model has an exact target
interval, a closed-form forced path, and a strictly negative-definite
interaction matrix.  A moving diagonal Hessian gives a closed-form hard-target
projection, multiplier, and pair effects of either sign, while a
coordinate-sequential three-dimensional protocol yields an exact path metric
strictly larger than the ambient projection distance.  Thus the global Green
and bordered hard-target responses are explicit laws of restricted metric-path
elimination built on Bellman composition.
\end{abstract}

\begin{keywords}
structured preconditioning, affine-invariant geometry, dynamic programming,
Jacobi operator, parametric optimization, M\"obius interaction
\end{keywords}

\begin{MSCcodes}
49L20, 49K40, 53C23, 65K10, 90C25
\end{MSCcodes}

\input{sections/01_introduction}

\input{sections/02_related_work}

\input{sections/03_spd_rdgc}

\input{sections/04_path_reduction}

\input{sections/05_diagonal_model}

\input{sections/06_verification}

\input{sections/07_discussion}

\bibliographystyle{siamplain}
\bibliography{references}

\appendix
\input{appendix/a_path_proofs}

\input{appendix/b_spd_diagonal_proofs}

\end{document}

%% file: preamble.tex
\usepackage{amsmath,amssymb,mathtools,mathrsfs}
\usepackage{bm}
\usepackage{booktabs}
\usepackage{enumitem}
\usepackage{graphicx}
\usepackage{microtype}
\usepackage{placeins}

\newsiamthm{assumption}{Assumption}
\newsiamremark{remark}{Remark}

\newcommand{\R}{\mathbb R}
\newcommand{\SPD}{\mathbb S_{++}}

\newcommand{\F}{\mathfrak F}
\newcommand{\Ck}{\mathcal C_K}

\newcommand{\AI}{\mathrm{AI}}
\newcommand{\Len}{\operatorname{Len}}

\newcommand{\Val}{\operatorname{Val}}

\newcommand{\diag}{\operatorname{diag}}

\newcommand{\rank}{\operatorname{rank}}
\newcommand{\tr}{\operatorname{tr}}
\newcommand{\dist}{\operatorname{dist}}

\newcommand{\dd}{\mathrm d}

\newcommand{\Jac}{\mathcal J_{\mathrm{Jac}}}
\newcommand{\Path}{\mathscr P}
\newcommand{\Hilb}{\mathscr H}
\newcommand{\fro}[1]{\left\lVert #1\right\rVert_F}

\newcommand{\norm}[1]{\left\lVert #1\right\rVert}
\newcommand{\ip}[2]{\left\langle #1,#2\right\rangle}

\crefname{assumption}{Assumption}{Assumptions}
\crefname{definition}{Definition}{Definitions}
\crefname{theorem}{Theorem}{Theorems}
\crefname{proposition}{Proposition}{Propositions}
\crefname{lemma}{Lemma}{Lemmas}
\crefname{corollary}{Corollary}{Corollaries}

%% file: sections/01_introduction.tex
\section{Introduction}

Structured preconditioners constrain optimization to a small family of
positive metrics.  This creates an endpoint question and a path question.
The endpoint question asks whether the family can reach a prescribed
condition number.  The path question asks how much intrinsic metric motion is
required to reach it from the current state.  Endpoint feasibility alone does
not measure this geometric effort: two families may reach the same target but
place it at very different affine-invariant distances from the initial
metric.

We formulate the path question as a value problem.  For a quadratic objective
with Hessian \(H\succ0\), a metric \(G\succ0\) induces the generalized
eigenproblem \(Hv=\lambda Gv\).  In relative coordinates
\[
  S=H^{-1/2}GH^{-1/2},
\]
the relevant condition ratio is
\[
  \kappa_{\rm gen}(H,G)
  :=\frac{\lambda_{\max}(G^{-1/2}HG^{-1/2})}
  {\lambda_{\min}(G^{-1/2}HG^{-1/2})}
  =\kappa(S).
\]
This is the positive generalized-eigenvalue ratio of \(Hv=\lambda Gv\),
which is invariant under inversion of the relative spectrum.  Given a metric
family \(\F\), an initial
state \(S_0\), and \(K\ge1\), restricted dynamic geometric complexity
\(D_{K,\F}(S_0;H)\) is the least affine-invariant length of an admissible
metric path that reaches
\[
  \Ck=\{S\in\SPD^d:\kappa(S)\le K\}.
\]
The value is \(+\infty\) when the target is structurally unreachable.  The
word complexity refers throughout to minimum intrinsic metric motion (or,
after fixing the horizon, its exactly equivalent action value); it carries no
iteration-count or wall-clock complexity claim.

The central operation is elimination of an entire future path.  If
\(c_{s,t}(x,y)\) denotes the least action between two states, then restriction
and concatenation yield
\[
  c_{s,t}(x,z)
  =
  \inf_y\{c_{s,r}(x,y)+c_{r,t}(y,z)\}.
\]
This min-plus semigroup is the exact global law of path elimination.  It
precedes smoothness, existence, and Hamilton--Jacobi regularity.  Tonelli
conditions supply attained paths, while a separate compact-control
realization supplies the HJB characterization.  For fixed-horizon kinetic
action, constant-speed gauge fixing gives the exact identity
\[
  \inf_\gamma\frac12\int_0^T\norm{\dot\gamma_t}^2\,\dd t
  =
  \frac{D_{K,\F}(S_0;H)^2}{2T}.
\]
Thus the smooth action used for response analysis has precisely the same
geometric minimizers as length RDGC.

Path elimination also has a differential law.  Near a unique smooth optimum,
the vertical second variation is a Jacobi operator \(\Jac\).  Eliminating
path fluctuations gives
\[
  D\gamma_\star=-\Jac^{-1}\mathcal A_{\gamma z},
  \qquad
  D^2\Val
  =
  \mathcal A_{zz}
  -\mathcal A_{z\gamma}\Jac^{-1}\mathcal A_{\gamma z}.
\]
The second term is the path-space Schur complement.  For additive
interventions it is a negative-semidefinite Gram operator.  Its mixed entries
integrate over cube faces to exact Boolean M\"obius effects.

The local formula becomes global under geometric hypotheses.  On a Hadamard
state space, smooth geodesically convex bulk and terminal potentials yield a
unique optimal path for every intervention in a neighborhood of the Boolean
cube.  Nonpositive curvature makes the full Jacobi index form uniformly
coercive.  The Green inverse then solves bulk forcing and terminal Robin
forcing simultaneously, generates pair and conditional M\"obius effects, and
recursively determines every prescribed finite interaction order under the
corresponding smoothness.  Its Gram representation also
identifies the exact nullspace and rank of value curvature and bounds that
curvature by the intervention-force norm and the Jacobi spectrum.

The hard condition target has a different local operator.  On a regular
active spectral stratum, terminal feasibility adds one multiplier and one
linearized boundary equation.  The resulting bordered Jacobi--KKT inverse
jointly differentiates the optimal path, moving projection endpoint, and
multiplier, and yields the constrained value Hessian.  This directly treats a
moving hard RDGC target while locating the precise failure boundary at
eigenvalue collisions, loss of strict complementarity, or active-stratum
changes.  A determinant-one diagonal RDGC model realizes the unconstrained
Green theory in closed form and produces a strictly negative-definite
interaction matrix.

\paragraph{Contributions.}
The paper establishes four results.

\begin{enumerate}[leftmargin=*,itemsep=0.25em]
  \item We define RDGC as an affine-invariant path distance to a
  Hessian-relative generalized spectral target and derive the
  full-SPD spectral benchmark.  Exact min-plus composition and the
  length--energy identity provide its path-elimination and fixed-horizon
  foundations.

  \item We prove a global Hadamard M\"obius--Jacobi theorem.  One uniformly
  coercive Green operator handles bulk and terminal forcing, gives exact
  force-to-curvature bounds and rank identities, integrates to pair and
  conditional effects, and recursively generates every prescribed finite
  interaction order.

  \item We prove a bordered Jacobi--KKT theorem for a genuinely hard terminal
  inequality.  On a smooth active condition-number stratum it differentiates
  the moving RDGC projection and multiplier and gives an exact constrained
  M\"obius formula whose sign can differ from the unconstrained Gram law.

  \item We give explicit Tonelli and compact-control realizations, specialize
  the response theory to affine-invariant SPD geometry, and solve a
  two-dimensional determinant-one diagonal RDGC model exactly.  Both the
  fixed-endpoint Green response and a moving hard condition target have
  closed-form paths and interactions; the latter realizes both interaction
  signs.  A three-dimensional coordinate-sequential update protocol has an
  exact \(\ell_1\)-type RDGC strictly larger than its ambient AIRM projection,
  showing where the declared path class changes the optimization value.
\end{enumerate}

The finite-dimensional companion \cite{li2026optimizationgeometrodynamics}
proves generic Schur response and affine-intervention Gram formulas for hidden
controller states.  Those
formulas are the finite-dimensional template here.  The present paper's new
response results begin
with their path-Hilbert realization: Hadamard geometry derives global
existence and cube-wide coercivity, the weak inverse becomes a strong
Green--Jacobi problem with Robin forcing, arbitrary prescribed finite orders
share that inverse, and a bordered boundary system treats the moving hard
condition target.  The companion
information-induced-geometry paper studies exact visible metric completion
\cite{li2026informationinducedgeometry}.  The present work is
self-contained at the path-space level and does not depend on the subsequent
causal optimizer calculus.

\paragraph{Organization.}
\Cref{sec:related} positions the results relative to optimal control,
parametric optimization, SPD geometry, and interaction decompositions.
\Cref{sec:spd-rdgc} defines the affine-invariant optimization target.
\Cref{sec:path-reduction} develops path elimination and global response.
\Cref{sec:diagonal-model} gives the exact diagonal realization, and
\cref{sec:verification,sec:discussion} record verification and scope.

%% file: sections/02_related_work.tex
\section{Related Work}
\label{sec:related}

\paragraph{Variational optimization and optimal control.}
Continuous-time optimization has been studied through gradient flows,
accelerated ODEs, Bregman Lagrangians, mirror descent, and natural-gradient
geometry
\cite{su2016differential,wibisono2016variational,krichene2015accelerated,
beck2003mirror,amari1998natural}.  Our controlled variable is the metric
itself, the terminal set is determined by preconditioning quality, and the
value measures the intrinsic deformation required to reach that set.

Min-plus semigroups, dynamic programming, and Hamilton--Jacobi equations are
classical in optimal control \cite{fleming2006controlled}.  Geometric control
also supplies the classical second variation, Jacobi fields, and conjugate
point analysis \cite{agrachev2004control}.  We use these ingredients as the
analytic foundation for a different reduction target: an entire metric path
ending at a preconditioning-quality set.  The new response statements are the
intervention-cube Green representation and its bordered hard-target analogue.

\paragraph{Parametric optimization and Jacobi response.}
Reduced Hessians, variable projection, solution-map sensitivity, and implicit
differentiation eliminate optimized variables through Schur complements
\cite{golub1973differentiation,bonnans2000perturbation,
dontchev2014implicit,amos2017optnet,blondel2022efficient,
franceschi2018bilevel}.  We do not claim the generic finite-dimensional Schur
formula as new.  The contribution is its coercive path-Hilbert realization,
the derivation of that coercivity from Hadamard geometry on an entire
intervention neighborhood, the joint bulk/terminal Green operator, and the
resulting finite- and arbitrary-order interaction formulas.  For an active hard
terminal inequality, the closest general framework is parametric constrained
optimization and generalized equations
\cite{bonnans2000perturbation,dontchev2014implicit}; our bordered theorem
identifies the derivative explicitly as a Jacobi--KKT boundary operator and
specializes it to the moving condition-number projection.

Jacobi fields and index forms classically describe variations of geodesics.
Here the Jacobi operator contains kinetic, curvature, bulk-potential, and
terminal Robin terms.  Nonpositive curvature contributes a nonnegative term
to the index form, allowing global control even when the convex potentials
have linear negative growth.

The finite-dimensional companion theory already contains the generic hidden
response, Schur reduced Hessian, affine-intervention negative Gram operator,
and its pair finite-difference integral
\cite{li2026optimizationgeometrodynamics}.  The local formulas in
\cref{thm:path-schur,cor:path-interaction} are their Hilbert-path lift and are
used here as infrastructure.  The additions proved in this paper are the
Hadamard hypotheses that derive global path existence, uniqueness, and
cube-wide coercivity; the strong Green--Jacobi boundary problem that unifies
bulk and Robin forcing; arbitrary prescribed finite-order path recursion; and
the bordered Jacobi--KKT response of a moving hard target.  These conclusions
are absent from the finite-dimensional companion reduction theorem.

\paragraph{Positive-definite geometry and structured preconditioning.}
The affine-invariant geometry of positive-definite matrices supplies the
distance, exponential map, and nonpositive curvature used in the SPD
specialization
\cite{bhatia2007positive,pennec2006riemannian,higham2008functions}.
Geodesic convex analysis and metric projection on Hadamard spaces are
developed in
\cite{sra2015conic,lim2004best,bacak2014convex}.
Smoothness on a fixed simple-eigenvalue stratum and loss of classical
differentiability at spectral multiplicities follow the spectral-function
theory in \cite{lewis1996derivatives,lewis2001twice,lewis2003eigenvalue}.

Classical equilibration and condition-number minimization optimize an endpoint
scaling or matrix approximation
\cite{vandersluis1969equilibration,marechal2009optimizing,
lu2011minimizing,diamond2016stochastic,tanaka2013positive,
qu2024optimal,dogan2025geodesicpreconditioning}.  These works clarify the
endpoint benchmark addressed by \cref{thm:full-spd-benchmark}.  RDGC adds the
initial metric and an admissible path geometry; its hard-target theorem
differentiates the projection path in addition to the endpoint condition
number.

Diagonal, block, and Kronecker preconditioners arise in adaptive and
second-order optimization
\cite{duchi2011adagrad,kingma2014adam,shazeer2018adafactor,
martens2015kfac,grosse2016kfc,gupta2018shampoo,anil2020scalable}.
These algorithms motivate restricted metric families, but a concrete
optimizer-to-path statement must additionally specify the optimizer state,
damping, bias correction, step map, and interpolation.  RDGC is conditional
on that realization and measures the geometric effort once it has been
declared.

\paragraph{Finite interaction decompositions.}
Boolean and factorial interactions can be represented by finite differences,
M\"obius coefficients, and higher-order attribution indices
\cite{pukelsheim2006optimal,dasgupta2015causal,dhamdhere2020shapley,
janizek2021explaining}.  The result here is a path-geometric representation:
the pair coefficient is a cube-face integral of an inverse-Jacobi Gram kernel,
conditional effects use the same kernel on subfaces, and all higher
coefficients follow from one Green recursion.

%% file: sections/03_spd_rdgc.tex
\section{Restricted Metric Paths in Affine-Invariant Geometry}
\label{sec:spd-rdgc}

We now specialize the path system to positive-definite metrics and make the
terminal optimization problem explicit.  Let
\[
  f(\theta)=\frac12\theta^\top H\theta,
  \qquad H\in\SPD^d,
\]
and let \(G\in\SPD^d\) be a positive metric.  In Hessian-relative
coordinates,
\[
  S=H^{-1/2}GH^{-1/2}.
\]
The Hessian-relative condition ratio is
\[
  \kappa_{\rm gen}(H,G)
  :=\frac{\lambda_{\max}(G^{-1/2}HG^{-1/2})}
  {\lambda_{\min}(G^{-1/2}HG^{-1/2})}
  =\kappa(S).
\]
Whenever the shorthand \(\kappa(G^{-1}H)\) appears, it denotes this ratio of
positive generalized eigenvalues, never the Euclidean singular-value
condition number of the generally nonnormal product \(G^{-1}H\).
The affine-invariant metric and distance are
\[
  \norm{Z}_{S,\AI}
  =\fro{S^{-1/2}ZS^{-1/2}},
  \qquad
  d_{\AI}(S_0,S_1)
  =\fro{\log(S_0^{-1/2}S_1S_0^{-1/2})}.
\]
The cone \(\SPD^d\) is a finite-dimensional Hadamard manifold
\cite{bhatia2007positive,pennec2006riemannian}.

For \(K\ge1\), define the condition target
\[
  \Ck=\{S\in\SPD^d:\kappa(S)\le K\}.
\]
The unrestricted distance to this target has a one-dimensional spectral
formula.

\begin{theorem}[Full-SPD spectral benchmark]
\label{thm:full-spd-benchmark}
Let \(y_1\le\cdots\le y_d\) be the ordered log-eigenvalues of
\(S_0\in\SPD^d\).  Then
\[
  D_K(S_0)
  :=d_{\AI}(S_0,\Ck)
  =
  \min_{c\in\R}
  \left(
  \sum_{i=1}^d\dist(y_i,[c,c+\log K])^2
  \right)^{1/2}.
\]
\end{theorem}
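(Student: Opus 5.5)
The plan is to prove two inequalities: a lower bound by spectral majorization of the affine-invariant distance, and a matching upper bound from an explicit commuting competitor. Since \(\kappa\) is scale-invariant and \(\Ck\) is closed under congruence only through spectra, the natural reduction is to eigenvalues.

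\textbf{Lower bound.} For any \(S\in\Ck\), write \(d_{\AI}(S_0,S)=\fro{\log(S_0^{-1/2}SS_0^{-1/2})}\). I would invoke the classical log-majorization inequality for the affine-invariant metric (Bhatia, \cite{bhatia2007positive}): if \(x_1\le\cdots\le x_d\) are the ordered log-eigenvalues of \(S\), then
\[
  d_{\AI}(S_0,S)\ \ge\ \Bigl(\sum_{i=1}^d (y_i-x_i)^2\Bigr)^{1/2}.
\]
This follows from the Lidskii-type statement that the log-eigenvalues of \(S_0^{-1/2}SS_0^{-1/2}\) are majorized by \(x^\downarrow-y^\uparrow\) in the appropriate sense. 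Equivalently, one can use Gelfand--Naimark: \(\lambda(AB)\) is multiplicatively majorized in the right way, after which convexity of \(t\mapsto t^2\) gives the Frobenius bound. Now \(S\in\Ck\) means \(x_d-x_1\le\log K\), so all \(x_i\) lie in \([c,c+\log K]\) with \(c=x_1\). Hence each \(|y_i-x_i|\ge\dist(y_i,[c,c+\log K])\), and the lower bound follows after taking the infimum over \(S\).

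\textbf{Upper bound.} Let \(S_0=U\diag(e^{y})U^\top\). For fixed \(c\), take \(x_i=\min(\max(y_i,c),c+\log K)\), the clamp of \(y_i\) onto the interval, and set \(S=U\diag(e^{x})U^\top\). Then \(\kappa(S)\le K\), and because \(S\) commutes with \(S_0\) we have \(d_{\AI}(S_0,S)=\|x-y\|_2=\bigl(\sum_i\dist(y_i,[c,c+\log K])^2\bigr)^{1/2}\). Taking the infimum over \(c\) gives equality with the minimum.

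\textbf{Attainment of the minimum.} The objective is continuous and convex in \(c\), and it tends to \(+\infty\) as \(|c|\to\infty\). The infimum is therefore a minimum. Closedness of \(\Ck\) together with properness of \(d_{\AI}\) also shows that the distance to \(\Ck\) is attained.

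The main obstacle is the lower bound, specifically citing or justifying the log-eigenvalue majorization for the affine-invariant distance cleanly. I would first try the Bhatia result \(d_{\AI}(A,B)\ge\|\log\lambda^\downarrow(A)-\log\lambda^\downarrow(B)\|_2\), which is the exponential-metric-increasing and Lidskii consequence. I would also check that it pairs sorted with sorted, since pairing sorted with sorted is what makes the clamping argument valid.
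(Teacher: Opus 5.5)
Your proposal is correct and follows the paper's proof essentially step for step: the same cited spectral lower bound \(d_{\AI}(S_0,S)\ge\|y-x\|_2\) with both log-spectra sorted the same way, the same observation that a spectrum of width at most \(\log K\) lies in some window \([c,c+\log K]\), and the same commuting clamped competitor for the reverse inequality. Your attainment argument in \(c\) covers a point the paper leaves implicit. One caution: the majorization you sketch, \(\log\lambda(S_0^{-1/2}SS_0^{-1/2})\prec x^\downarrow-y^\uparrow\), points the wrong way and only gives an upper bound on \(d_{\AI}\). The lower bound instead needs \(x^\downarrow-y^\downarrow\prec\log\lambda(S_0^{-1/2}SS_0^{-1/2})\), or equivalently the exponential-metric-increasing inequality followed by Hoffman--Wielandt, which is the inequality you ultimately propose to cite.
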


Let \(\F\subset\SPD^d\) be an admissible family and set
\[
  \mathcal S_\F(H)
  =
  \{H^{-1/2}GH^{-1/2}:G\in\F\}.
\]
A realization includes a path-connected component, a treatment of scale
freedom, an admissible absolutely continuous path class, and its induced or
quotient affine-invariant length.

\begin{definition}[Restricted dynamic geometric complexity]
\label{def:restricted-complexity}
For \(G_0\in\F\) and \(S_0=H^{-1/2}G_0H^{-1/2}\), define
\[
\begin{aligned}
  D_{K,\F}(S_0;H)
  =
  \inf\{&\Len_{\AI}(S_{[0,T]}):
  S(0)=S_0,\ S(T)\in\Ck,\\
  &S_t\in\mathcal S_\F(H),
  \ S_{[0,T]}\text{ admissible}\}.
\end{aligned}
\]
The value is \(+\infty\) if the relative target is empty or belongs to a
different admissible component.
\end{definition}

For a closed geodesically convex realization, RDGC is an ordinary Hadamard
projection distance.  This fact connects the nonsmooth length problem to the
fixed-horizon action used in the response theory.

\begin{proposition}[Static RDGC projection]
\label{prop:rdgc-target-projection}
Suppose that \(\mathcal S_\F(H)\) is nonempty, closed, and geodesically
convex and that
\[
  \mathcal T_{K,\F}(H)
  =\mathcal S_\F(H)\cap\Ck
\]
is nonempty.  Then \(S_0\) has a unique projection
\(P_{K,\F}(S_0;H)\) onto \(\mathcal T_{K,\F}(H)\), and
\[
  D_{K,\F}(S_0;H)
  =
  d_{\AI}\bigl(S_0,P_{K,\F}(S_0;H)\bigr).
\]
For quadratic kinetic action with terminal indicator of
\(\mathcal T_{K,\F}(H)\), the value at time \(t<T\) is
\[
  V(t,S)=
  \frac{d_{\AI}(S,\mathcal T_{K,\F}(H))^2}{2(T-t)}.
\]
The minimizing path is the unique constant-speed geodesic to the projection.
\end{proposition}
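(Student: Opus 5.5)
We first establish the projection and then use it for both the length value and the action value. Because \(\SPD^d\) with the affine-invariant metric is a complete Hadamard manifold, \(\Ck\) is closed and geodesically convex: \(\log\lambda_{\max}\) is geodesically convex and \(\log\lambda_{\min}\) is geodesically concave along affine-invariant geodesics. It follows that \(\mathcal T_{K,\F}(H)=\mathcal S_\F(H)\cap\Ck\) is a nonempty, closed, geodesically convex set. The standard Hadamard projection theorem (as in \cite{bacak2014convex}) then gives existence and uniqueness of the nearest point \(P_{K,\F}(S_0;H)\). Existence comes from properness of \(d_{\AI}(S_0,\cdot)\) on a closed set in a complete, locally compact manifold. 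Uniqueness comes from strict geodesic convexity of \(d_{\AI}(S_0,\cdot)^2\), which follows from the CAT(0) comparison inequality.

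Next we identify \(D_{K,\F}\) with \(d_{\AI}(S_0,P)\), working in the realization whose admissible paths are absolutely continuous curves in \(\mathcal S_\F(H)\) with induced affine-invariant length.

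For the lower bound, take any admissible path ending in \(\Ck\). It stays in \(\mathcal S_\F(H)\), so its endpoint lies in \(\mathcal T_{K,\F}(H)\). Its length is at least the ambient distance between its endpoints, which is at least \(d_{\AI}(S_0,\mathcal T_{K,\F}(H))\).

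For the upper bound, the unique ambient geodesic from \(S_0\) to \(P\) stays in \(\mathcal S_\F(H)\), because \(S_0\in\mathcal S_\F(H)\) and that set is geodesically convex. This geodesic is therefore admissible, and its length equals the distance. Hence the infimum is attained, and \(D_{K,\F}(S_0;H)=d_{\AI}(S_0,P)\).

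The one point to check here is that ``admissible'' allows this geodesic and uses the induced metric. I will state this as part of the closed convex realization hypothesis.

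For the value function, fix \(t<T\) and \(S\in\mathcal S_\F(H)\), and consider any path \(\gamma\) on \([t,T]\) from \(S\) with \(\gamma_T\in\mathcal T_{K,\F}(H)\). Cauchy--Schwarz gives
\[
  \frac12\int_t^T\norm{\dot\gamma}^2\,\dd s\ge\frac{\Len(\gamma)^2}{2(T-t)}\ge\frac{d_{\AI}(S,\mathcal T_{K,\F}(H))^2}{2(T-t)}.
\]
The constant-speed reparametrization of the geodesic from \(S\) to its projection achieves equality in both inequalities, which yields the formula for \(V(t,S)\).

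For uniqueness of the minimizer, suppose a path attains this value. Equality in the second inequality forces the endpoint to be the unique projection of \(S\). It also forces the path to have minimal length, so it is a reparametrized minimizing geodesic, and minimizing geodesics in a Hadamard manifold are unique. Equality in Cauchy--Schwarz then forces constant speed.

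The main obstacle is convexity of \(\Ck\), so I would prove it directly. Along the geodesic \(S_s=S_0^{1/2}\exp(sX)S_0^{1/2}\), the map \(s\mapsto\log\lambda_{\max}(S_s)\) is convex. To see this, note that for any geodesic \(\gamma\) and \(\mu\in[0,1]\), the operator-norm bound \(\lambda_{\max}(\gamma_\mu)\le\lambda_{\max}(\gamma_0)^{1-\mu}\lambda_{\max}(\gamma_1)^{\mu}\) follows from the geometric-mean inequality \(A\#_\mu B\le\norm{A}^{1-\mu}\norm{B}^\mu I\), using monotonicity of the weighted geometric mean. Applying the same argument to inverses gives concavity of \(\log\lambda_{\min}\). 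Hence \(\log\kappa\) is geodesically convex, and its sublevel set \(\Ck\) is geodesically convex; it is closed by continuity of eigenvalues.
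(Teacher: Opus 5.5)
Your proposal is correct and follows the paper's proof essentially step for step. Geodesic convexity of \(\log\kappa\) makes \(\mathcal T_{K,\F}(H)\) closed and geodesically convex. The Hadamard projection theorem, the length--distance bound, and convexity of \(\mathcal S_\F(H)\) give the RDGC identity, and Cauchy--Schwarz with the constant-speed geodesic gives \(V(t,S)\). Your direct convexity argument via \(A\#_\mu B\le\norm{A}^{1-\mu}\norm{B}^{\mu}I\) and your equality-case proof that the minimizing path is unique make explicit two steps the paper only asserts.
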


The projection reduction is a benchmark for unrestricted curves in a closed
geodesically convex realization.  It does not apply when the update protocol
restricts admissible velocities or excludes the ambient geodesic.  The exact
coordinate-sequential model in \cref{prop:sequential-diagonal-rdgc} gives a
structured example whose RDGC strictly exceeds this ambient benchmark.

The convex potentials required by the global response theorem arise
naturally from relative spectra.  For \(H_a\in\SPD^d\) and a convex
permutation-invariant function \(\phi_a:\R^d\to\R\), write
\[
  \Omega_a(G)
  =
  \phi_a\!\left(
  \log\lambda(G^{-1/2}H_aG^{-1/2})
  \right).
\]

\begin{proposition}[Geodesically convex spectral information]
\label{prop:spd-spectral-convexity}
Let \(\F\subset\SPD^d\) be nonempty, closed, and geodesically convex.  If
\(\rho:\R^q\to\R\) is convex and nondecreasing in every coordinate, then
\[
  \mathcal J(G)=\rho(\Omega_1(G),\ldots,\Omega_q(G))
\]
is closed and geodesically convex on \(\F\).  If every
\(\phi_a(x+c\mathbf1)=\phi_a(x)\), then
\(\mathcal J(cG)=\mathcal J(G)\).
In particular,
\[
  \phi_{\rm osc}(x)=\max_i x_i-\min_i x_i
\]
generates \(\mathcal J_H(G)=\log\kappa_{\rm gen}(H,G)\).  Smooth spectral
approximations retain geodesic convexity.
\end{proposition}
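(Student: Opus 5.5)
The plan is to reduce geodesic convexity of \(\mathcal J\) to geodesic convexity of each \(\Omega_a\) on all of \(\SPD^d\), and then compose with \(\rho\). Take an affine-invariant geodesic \(G_t=G_0^{1/2}(G_0^{-1/2}G_1G_0^{-1/2})^{t}G_0^{1/2}\); it stays in \(\F\) because \(\F\) is geodesically convex. The key structural fact is that \(\lambda(G^{-1/2}H_aG^{-1/2})=\lambda(G^{-1}H_a)\), and these are the reciprocals of \(\lambda(H_a^{-1/2}GH_a^{-1/2})\). The map \(G\mapsto H_a^{-1/2}GH_a^{-1/2}\) is an isometry of the affine-invariant geometry, so it sends geodesics to geodesics. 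Therefore
\[
t\mapsto \log\lambda(G_t^{-1/2}H_aG_t^{-1/2})=-\log\lambda(S^a_t),
\]
where \(S^a_t\) is an affine-invariant geodesic. We then need \(t\mapsto\psi(\log\lambda(S_t))\) to be convex along geodesics for every convex permutation-invariant \(\psi\) (here \(\psi(x)=\phi_a(-x)\), which is again convex and permutation invariant).

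For this central step I would use the majorization form of the geodesic convexity of spectral functions. For geodesics \(S_t\) in \(\SPD^d\), the vector \(\log\lambda(S_{(t+s)/2})\) is weakly majorized by the average of the sorted vectors \(\tfrac12(\log\lambda(S_t)+\log\lambda(S_s))\), and the weak majorization can be upgraded to full majorization because \(\sum_i\log\lambda_i=\log\det\) is affine along geodesics. The inequality comes from the log-majorization \(\lambda(A\#B)\prec_{\log}\lambda(A)^{1/2}\lambda(B)^{1/2}\) (Ando--Hiai type, or the Lidskii--Wielandt/Gel'fand--Naimark inequality after writing \(A\#B=A^{1/2}(A^{-1/2}BA^{-1/2})^{1/2}A^{1/2}\)). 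A convex permutation-invariant \(\psi\) is Schur-convex, so majorization together with convexity gives midpoint convexity of \(\psi(\log\lambda(S_t))\). Continuity of \(\psi\) (it is finite and convex on \(\R^d\)) then upgrades midpoint convexity to convexity. This yields geodesic convexity of each \(\Omega_a\), and continuity of eigenvalues gives closedness. I expect this majorization step to be the main obstacle. If a clean citation is unavailable, I would instead invoke the Lewis-type characterization in Sra's conic geometric optimization results \cite{sra2015conic}.

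Composition is standard: if each \(\Omega_a\circ G_t\) is convex in \(t\) and \(\rho\) is convex and coordinatewise nondecreasing, then
\[
\rho(\Omega(G_t))\le\rho\bigl((1-t)\Omega(G_0)+t\Omega(G_1)\bigr)\le(1-t)\rho(\Omega(G_0))+t\rho(\Omega(G_1)).
\]
Closedness (lower semicontinuity) follows because \(\rho\), being finite and convex, is continuous. For scale invariance, \(\lambda((cG)^{-1/2}H_a(cG)^{-1/2})=c^{-1}\lambda(\cdot)\), so the log-spectrum shifts by \(-\log c\,\mathbf 1\) and \(\Omega_a\) is unchanged.

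For the specific case, \(\phi_{\rm osc}\) is a difference of a max and a min and is convex: \(\max_i x_i\) is convex and \(-\min_i x_i=\max_i(-x_i)\) is convex. It is permutation and shift invariant, and \(\max\log\lambda-\min\log\lambda=\log\kappa_{\rm gen}(H,G)\). Taking \(q=1\) and \(\rho=\mathrm{id}\) gives \(\mathcal J_H\). Smooth spectral approximations such as \(\tau\log\sum_i e^{x_i/\tau}+\tau\log\sum_i e^{-x_i/\tau}\) are convex, symmetric and shift invariant, so the same argument applies; more generally, any convex symmetric smoothing of \(\phi_a\) keeps the hypotheses intact.
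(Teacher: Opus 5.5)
Your proposal is correct and follows essentially the paper's route: pass to the congruence isometry and the reciprocal relative spectrum, log-majorize the eigenvalues of the geometric mean and upgrade to majorization via the determinant identity, apply Schur convexity of convex symmetric functions, and finish with monotone convex composition, continuity, and the logarithmic shift for scale invariance. The paper differs only in proving the weighted bound $\prod_{i\le k}\lambda_i^\downarrow(A\#_tB)\le\prod_{i\le k}\lambda_i^\downarrow(A)^{1-t}\lambda_i^\downarrow(B)^t$ for every $t$ at once, from $A\#_tB\preceq\lambda_1(A)^{1-t}\lambda_1(B)^t I$ by monotonicity of the mean applied to exterior powers, which removes your midpoint-plus-continuity step and the need for a citation; note also that the upper Gel'fand--Naimark bound applied to your factorization $A^{1/2}(A^{-1/2}BA^{-1/2})^{1/2}A^{1/2}$ does not by itself give the needed inequality, so a product-eigenvalue proof should instead start from $B=(A\#B)A^{-1}(A\#B)$.
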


Combining this construction with the global path theorem gives the exact SPD
interaction law used below.

\begin{corollary}[SPD--AIRM M\"obius--Jacobi specialization]
\label{cor:spd-mobius-jacobi}
Let \(\mathcal N\) be \(\SPD^d\), or a structured AIRM realization that is
a complete simply connected totally geodesic manifold.  Let the bulk and
terminal potentials in \cref{thm:global-mobius-jacobi} be smooth
geodesically convex spectral information potentials satisfying its
regularity assumptions.  Then the conclusions of that theorem hold with
\[
  J_u\xi
  =-D_t^2\xi-R^{\AI}(\xi,\dot G_u)\dot G_u
  +\operatorname{Hess}^{\AI}W_u\,\xi,
\]
and
\[
  -\ip{R^{\AI}(\xi,\dot G_u)\dot G_u}{\xi}_{G_u}\ge0.
\]
In particular, the pair effect is
\[
  \boxed{
  \zeta_{\{i,j\}}
  =-\int_{[0,1]^2}
  \left\langle
  D\Delta_i,\mathfrak J_u^{-1}D\Delta_j
  \right\rangle\dd u_i\dd u_j.}
\]
Fixed smooth active strata are covered.  The nonsmooth condition-number width
requires a generalized Jacobi inclusion at eigenvalue or active-index
changes.
\end{corollary}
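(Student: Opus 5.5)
The corollary will follow by checking the hypotheses of \cref{thm:global-mobius-jacobi} for the AIRM state space and then specializing its conclusions, without re-proving any response formula.

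\emph{Step 1: the state space is Hadamard.} The cone \(\SPD^d\) with the affine-invariant metric is complete, simply connected and has nonpositive sectional curvature \cite{bhatia2007positive,pennec2006riemannian}. For a structured realization \(\mathcal N\), the assumption that \(\mathcal N\) is complete, simply connected and totally geodesic carries this over. Total geodesicity makes the second fundamental form vanish. The Gauss equation then says the intrinsic curvature of \(\mathcal N\) is the restriction of \(R^{\AI}\), so \(\mathcal N\) is itself Hadamard.

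\emph{Step 2: the potentials are admissible.} By \cref{prop:spd-spectral-convexity}, the spectral information potentials are geodesically convex on \(\SPD^d\). Their restrictions to a totally geodesic \(\mathcal N\) remain geodesically convex, because geodesics of \(\mathcal N\) are ambient geodesics. Smoothness and the remaining regularity conditions are assumed in the statement. Together with Step 1, every hypothesis of \cref{thm:global-mobius-jacobi} holds, and all of its conclusions transfer.

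\emph{Step 3: identify the Jacobi operator.} The second variation of the action
\[
\frac12\int\norm{\dot G}^2_{\AI}+\int W_u(G)
\]
along the optimal path \(G_u\) is the standard Riemannian index form. Integrating by parts gives
\[
J_u\xi=-D_t^2\xi-R^{\AI}(\xi,\dot G_u)\dot G_u+\operatorname{Hess}^{\AI}W_u\,\xi .
\]
On totally geodesic \(\mathcal N\) the covariant derivative, curvature and Hessian all agree with the ambient ones restricted to tangent fields. The sign claim is
\[
-\ip{R^{\AI}(\xi,\dot G_u)\dot G_u}{\xi}_{G_u}=-K(\xi,\dot G_u)\,|\xi\wedge\dot G_u|^2\ge0 ,
\]
which holds because sectional curvature is nonpositive.

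One can also check this directly for AIRM. Translating to the identity by congruence, the curvature is \(R(X,Y)Z=-\tfrac14[[X,Y],Z]\) for symmetric \(X,Y,Z\). Then
\[
-\ip{R(\xi,v)v}{\xi}=\tfrac14\fro{[\xi,v]}^2\ge0 ,
\]
up to the sign convention, which is fixed by the requirement that the resulting sectional curvature be nonpositive.

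\emph{Step 4: the pair effect.} This is the theorem's pair formula, read with \(\mathfrak J_u\) the Green–Jacobi operator including the terminal Robin term, and \(D\Delta_i\) the intervention forces.

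\emph{Step 5: scope remarks.} On a fixed smooth active stratum, the spectral potential is a smooth function of simple eigenvalues \cite{lewis1996derivatives,lewis2001twice}, so the argument applies unchanged there. At eigenvalue collisions or changes of active index, \(\phi_{\rm osc}\) is not \(C^2\). There the Hessian term is only a set-valued generalized Hessian, and the Jacobi equation becomes an inclusion; this is stated as a limitation rather than proved.

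\emph{Main obstacle.} The delicate point is Step 1 for structured realizations. One must make sure the curvature term and the convexity are computed intrinsically on \(\mathcal N\), which is exactly what total geodesicity provides. I would cite the Gauss equation for this and not rely on any embedding-dependent computation.
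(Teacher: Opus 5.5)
Your proposal is correct and follows the paper's proof essentially step for step. You verify that \(\SPD^d\) and the totally geodesic realization are Hadamard, let the convex potentials supply (MJ2), read off the operator in (MJ7) with the AIRM connection and curvature, use nonpositive curvature for the sign, obtain the pair formula from (MJ10), and treat the stratum/collision clause as a scope remark; your Gauss-equation argument and the explicit identity \(-\ip{R(\xi,v)v}{\xi}=\tfrac14\fro{[\xi,v]}^2\) spell out what the paper asserts in one line, and the appeal to \cref{prop:spd-spectral-convexity} is harmless but unnecessary, since geodesic convexity is already a hypothesis.
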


\begin{remark}[Structured families covered directly]
The positive diagonal cone in a fixed basis and every fixed block-diagonal
SPD cone are complete simply connected totally geodesic AIRM submanifolds;
their geodesics, logarithms, and exponentials preserve the declared block
structure.  The determinant-one diagonal family is the corresponding flat
totally geodesic scale section.  Hence the corollaries above apply directly to
these families.  Generic low-rank and Kronecker parameterizations require
their own embedded or quotient geometry and are outside this direct
totally-geodesic specialization.
\end{remark}

The hard condition target itself also has a classical response away from
spectral collisions.  It is useful here to write
\[
  \kappa_{\rm gen}(H,G)
  =\frac{\lambda_{\max}(G^{-1/2}HG^{-1/2})}
  {\lambda_{\min}(G^{-1/2}HG^{-1/2})}.
\]

\begin{corollary}[Moving hard-condition target on a smooth spectral stratum]
\label{cor:hard-condition-response}
Let \(\F\) be a complete simply connected totally geodesic AIRM
realization, fix \(G_0\in\F\), and let \(H_u\in\SPD^d\) and \(K_u>1\) vary
smoothly.  Define
\[
  c_u(G)=\log\kappa_{\rm gen}(H_u,G)-\log K_u,
  \qquad
  \mathcal T_u=\{G\in\F:c_u(G)\le0\}.
\]
Suppose that \(G_0\notin\mathcal T_{u_0}\), that the largest and smallest
generalized eigenvalues of \((H_{u_0},q_0)\) are simple at the AIRM projection
\(q_0=P_{\mathcal T_{u_0}}G_0\), and that
\(\dd(c_{u_0}|_{\F})(q_0)\ne0\).  Then, locally in \(u\), the projection
endpoint \(q_u\), its constant-speed geodesic \(\gamma_u\), and its positive
terminal multiplier are smooth.  Moreover,
\[
  \mathcal V_{\rm hard}(u)
  =\frac{d_{\AI}(G_0,\mathcal T_u)^2}{2T}
\]
satisfies \textup{(HT4)}--\textup{(HT5)} with
\(\mathfrak J_u\) equal to the kinetic index form plus
\(\lambda_u\operatorname{Hess}^{\AI}(c_u|_{\F})\) at the endpoint.
If the active defining function is affine in the intervention coordinates on
a Boolean face, and the same active spectral stratum, simple extreme
eigenvalues, LICQ, positive multiplier, and bordered invertibility persist on
that entire face, \textup{(HT6)} gives its exact hard-RDGC interaction.

Thus the response differentiates the moving hard-target projection itself.
The conclusion ends precisely when an extreme generalized eigenvalue loses
simplicity, the restricted boundary loses regularity, strict complementarity
fails, or the active stratum changes.
\end{corollary}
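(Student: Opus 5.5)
The argument reduces the statement to the bordered Jacobi--KKT theorem of \cref{sec:path-reduction}. The hypotheses labelled (HT1)--(HT3) there, which the corollary does not restate, should be exactly the regularity hypotheses we verify below. We then read off (HT4)--(HT6) from the conclusions of that theorem.

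\emph{Static reduction.} Because \(\F\) is a complete simply connected totally geodesic AIRM submanifold, it is itself Hadamard. For fixed \(u\), \(c_u(G)=\phi_{\rm osc}(\log\lambda(G^{-1/2}H_uG^{-1/2}))-\log K_u\) is geodesically convex on \(\F\) by \cref{prop:spd-spectral-convexity}, so \(\mathcal T_u\) is closed and geodesically convex. When \(\mathcal T_u\) is nonempty, \cref{prop:rdgc-target-projection} (run on \(\F\), with \(H\) replaced by the identity coordinates) gives three things: a unique projection \(q_u\), the value \(d_{\AI}(G_0,\mathcal T_u)^2/(2T)\), and a minimizer that is the constant-speed geodesic \(\gamma_u\) to \(q_u\). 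Since \(G_0\notin\mathcal T_{u_0}\), the constraint is active at \(q_0\).

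\emph{Smoothness of the constraint and KKT at \(u_0\).} Simplicity of \(\lambda_{\max}\) and \(\lambda_{\min}\) at \((H_{u_0},q_0)\) gives analyticity of both eigenvalues near \((u_0,q_0)\) by standard perturbation theory (Lewis' spectral-function results). Hence \(c_u|_\F\) is smooth jointly in \((u,G)\) on a neighbourhood of \((u_0,q_0)\). Together with \(\dd(c_{u_0}|_\F)(q_0)\ne0\), this is LICQ. The projection is then characterised by the first-order system
\[
  -\log_{q}G_0=\lambda\,\operatorname{grad}^{\AI}(c_u|_\F)(q),\qquad c_u(q)=0,
\]
where \(\operatorname{grad}^{\AI}\) and \(\log_q\) are taken in \(\F\). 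Its multiplier is \(\lambda_{u_0}=d_{\AI}(G_0,q_0)/\norm{\operatorname{grad}c}>0\), which gives strict complementarity. In path form this system is the geodesic equation with Robin terminal condition \(\dot\gamma(T)=-T\lambda\,\operatorname{grad}c\), up to the kinetic normalisation.

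\emph{Bordered invertibility.} This is the main step. The bordered operator is the kinetic Jacobi form plus the endpoint term \(\lambda\operatorname{Hess}^{\AI}(c_u|_\F)\), bordered by \(\dd c\). We need it coercive on the tangent space \(\ker\dd c(q_0)\) of admissible variations. Three facts combine. First, nonpositive curvature makes the kinetic index form \(\int\norm{D_t\xi}^2-\ip{R(\xi,\dot\gamma)\dot\gamma}{\xi}\ge\int\norm{D_t\xi}^2\), which is coercive on \(\xi(0)=0\), as in \cref{cor:spd-mobius-jacobi}. Second, \(\lambda>0\). Third, \(c\) is geodesically convex, so \(\operatorname{Hess}c\succeq0\) at the smooth point \(q_0\). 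Coercivity on the constrained subspace plus the surjective border \(\dd c\ne0\) gives invertibility by the standard saddle-point (Babu\v{s}ka--Brezzi) argument.

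\emph{Conclusions.} With invertibility in hand, the implicit function theorem applies to the joint system in \((\gamma,q,\lambda,u)\). It gives smooth local branches \(q_u,\gamma_u,\lambda_u\), with \(\lambda_u>0\) and the active set preserved. This yields (HT4), and the bordered formula for the constrained Hessian gives (HT5). For (HT6), the hypotheses assume that the stratum, simplicity, LICQ, positivity and bordered invertibility persist on the whole Boolean face, and that the defining function is affine in \(u\). The branch is therefore smooth on the entire face, and integrating the mixed derivative over the face gives the exact interaction as in the theorem.

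The failure boundary is where the argument above breaks:
\begin{itemize}
\item eigenvalue collision destroys smoothness of \(c\);
\item \(\dd c=0\) destroys LICQ;
\item \(\lambda\to0\) destroys strict complementarity;
\item a change of active stratum changes which eigenvalues define \(c\).
\end{itemize}

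The delicate point is verifying that convexity of \(c\) really gives \(\operatorname{Hess}c\succeq0\) on \(\F\) with the correct sign convention inside the bordered form. If the sign convention does not deliver this directly, the fallback is only to need invertibility on a neighbourhood. Strict second-order sufficiency from the uniqueness of the projection in a Hadamard space gives that.
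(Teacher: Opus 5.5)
This is essentially the paper's proof. It uses the same steps: static reduction via \cref{prop:rdgc-target-projection} and the length--energy gauge; smoothness of \(c_u|_{\F}\) from simple extreme eigenvalues; LICQ and \(\lambda_0>0\) from \(G_0\notin\mathcal T_{u_0}\); coercivity from nonpositive curvature plus \(\lambda_0\operatorname{Hess}^{\AI}(c_{u_0}|_{\F})\succeq0\); and then \cref{thm:hard-target-jacobi}. The Hessian sign you worried about does come out right, because a geodesically convex function smooth near \(q_0\) has nonnegative Hessian there, so your fallback (which uniqueness alone would not justify) is unneeded.

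Two small repairs are needed. First, the projection condition is \(\log_{q}G_0=\lambda\operatorname{grad}^{\AI}(c_u|_{\F})(q)\); your sign is reversed, and the path multiplier is then \(\lambda/T\). Second, add, as the paper does, that geodesic convexity of \(c_u\) and strict convexity of the endpoint objective make every positive-multiplier KKT point the unique global projection. That is what identifies the implicit-function branch with \(q_u\) and the value with \(d_{\AI}(G_0,\mathcal T_u)^2/(2T)\).
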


%% file: sections/04_path_reduction.tex
\section{Path-Space Reduction and Jacobi Response}
\label{sec:path-reduction}

This section separates the algebraic, variational, and differential layers of
restricted geometric control.  The first layer needs only path concatenation.
The second adds compactness and convexity.  The third studies a smooth
nondegenerate optimal path through its second variation.

\subsection{Transition costs and min-plus composition}

Let \(\mathcal M\) be a state space and let
\(\Path_{s,t}(x,y)\) be a declared class of admissible paths from \(x\) at
time \(s\) to \(y\) at time \(t\).  A path has action
\(\mathcal A_{s,t}(\gamma)\in\R\cup\{+\infty\}\).  Empty path classes have
value \(+\infty\).

\begin{assumption}[Composable path system]
\label{ass:path-composition}
For every \(s<r<t\):
\begin{enumerate}[leftmargin=*,itemsep=0.15em]
  \item restricting \(\gamma\in\Path_{s,t}(x,z)\) at time \(r\) produces
  paths in \(\Path_{s,r}(x,\gamma(r))\) and
  \(\Path_{r,t}(\gamma(r),z)\);
  \item any two admissible segments with a common state at time \(r\) can be
  concatenated into an admissible path;
  \item the action is additive under restriction and concatenation.
  \item for every fixed interval \([a,b]\), there is
  \(m_{a,b}>-\infty\) such that
  \(\mathcal A_{a,b}(\gamma)\ge m_{a,b}\) for every admissible segment.
\end{enumerate}
\end{assumption}

Define the transition cost
\begin{equation*}
  c_{s,t}(x,y)
  =
  \inf_{\gamma\in\Path_{s,t}(x,y)}\mathcal A_{s,t}(\gamma).
\end{equation*}

\begin{theorem}[Path-elimination semigroup]
\label{thm:path-dpp}
Under \cref{ass:path-composition}, for every \(s<r<t\),
\begin{equation}
  \boxed{
  c_{s,t}(x,z)
  =
  \inf_{y\in\mathcal M}
  \{c_{s,r}(x,y)+c_{r,t}(y,z)\}.}
  \tag{P1}
\end{equation}
Both sides lie in \(\R\cup\{+\infty\}\), so the min-plus sum is always
defined.  The equality does not require minimizers, smoothness, or convexity.
\end{theorem}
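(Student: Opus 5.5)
The plan is to prove the two inequalities separately, using only items (1)--(4) of \cref{ass:path-composition}. The first task is to check that every quantity lives in \(\R\cup\{+\infty\}\). Fix \(s<r<t\). By item (4), every admissible segment on \([s,r]\) has action at least \(m_{s,r}>-\infty\). Hence \(c_{s,r}(x,y)\ge m_{s,r}\) whenever \(\Path_{s,r}(x,y)\) is nonempty, and \(c_{s,r}(x,y)=+\infty\) when it is empty. The same reasoning applies to \(c_{r,t}\) and \(c_{s,t}\). So no value is \(-\infty\), each sum \(c_{s,r}(x,y)+c_{r,t}(y,z)\) is a well-defined element of \(\R\cup\{+\infty\}\), and the infimum over \(y\) of values bounded below by \(m_{s,r}+m_{r,t}\) is again in \(\R\cup\{+\infty\}\).

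\emph{Inequality \(\ge\).} If \(\Path_{s,t}(x,z)\) is empty, the left side is \(+\infty\) and there is nothing to prove. Otherwise, take any \(\gamma\in\Path_{s,t}(x,z)\) and set \(y=\gamma(r)\). By item (1), the restrictions \(\gamma|_{[s,r]}\) and \(\gamma|_{[r,t]}\) are admissible. By item (3),
\[
  \mathcal A_{s,t}(\gamma)
  =\mathcal A_{s,r}(\gamma|_{[s,r]})+\mathcal A_{r,t}(\gamma|_{[r,t]})
  \ge c_{s,r}(x,y)+c_{r,t}(y,z)
  \ge \inf_{y'}\{c_{s,r}(x,y')+c_{r,t}(y',z)\}.
\]
Taking the infimum over \(\gamma\) gives the bound.

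\emph{Inequality \(\le\).} Fix \(y\) and \(\varepsilon>0\). If either \(c_{s,r}(x,y)\) or \(c_{r,t}(y,z)\) equals \(+\infty\), the bound for this \(y\) is trivial. Otherwise both values are finite, by the first paragraph. Then choose \(\alpha\in\Path_{s,r}(x,y)\) and \(\beta\in\Path_{r,t}(y,z)\) with actions within \(\varepsilon\) of the respective infima. By item (2), the concatenation \(\alpha\ast\beta\) lies in \(\Path_{s,t}(x,z)\), and by item (3),
\[
  c_{s,t}(x,z)\le \mathcal A_{s,t}(\alpha\ast\beta)\le c_{s,r}(x,y)+c_{r,t}(y,z)+2\varepsilon.
\]
Letting \(\varepsilon\to0\) and then taking the infimum over \(y\) finishes the proof.

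No minimizers, smoothness, or convexity are used at any point. The only delicate step is the extended-real bookkeeping. That is exactly where item (4) enters: it excludes \(-\infty\) actions, so that sums like \(-\infty+\infty\) never arise and \(\varepsilon\)-minimizers of finite infima exist. I would also remark that without item (4) the \(\ge\) direction still holds, but the min-plus sum could become undefined.
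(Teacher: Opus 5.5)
Your proof is correct and is essentially the paper's argument: restriction plus additivity gives $\ge$, concatenation of $\varepsilon$-optimal segments gives $\le$, and item (4) only keeps the transition costs (not merely individual actions) away from $-\infty$, so every min-plus sum is defined. The one cosmetic difference is that you prove $c_{s,t}(x,z)\le c_{s,r}(x,y)+c_{r,t}(y,z)$ for each fixed $y$ before taking the infimum, which avoids the paper's choice of a near-optimal intermediate state and its $3\varepsilon$ bookkeeping.
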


For a terminal cost \(\Gamma:\mathcal M\to\R\cup\{+\infty\}\) bounded
below, define
\[
  V(s,x)
  =
  \inf_y\{c_{s,T}(x,y)+\Gamma(y)\}.
\]

\begin{corollary}[Bellman principle]
\label{cor:path-bellman}
Under the same assumptions,
\begin{equation}
  \boxed{
  V(s,x)
  =
  \inf_y\{c_{s,r}(x,y)+V(r,y)\},
  \qquad s<r<T.}
  \tag{P2}
\end{equation}
Thus Bellman recursion is associativity of path elimination across time
slices.
\end{corollary}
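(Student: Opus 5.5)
We derive (P2) from (P1) of \cref{thm:path-dpp} by exchanging two infima; no minimizers or regularity are used. Substituting the definition of \(V(r,y)\) into the right-hand side of (P2) gives
\[
  \inf_y\{c_{s,r}(x,y)+V(r,y)\}
  =\inf_y\inf_z\{c_{s,r}(x,y)+c_{r,T}(y,z)+\Gamma(z)\}.
\]
Interchanging the two infima and applying (P1) with \(t=T\) to the inner infimum over \(y\) yields
\(\inf_z\{c_{s,T}(x,z)+\Gamma(z)\}=V(s,x)\), which is (P2).

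The main point to check is that the extended-real arithmetic is legitimate. Each summand lies in \(\R\cup\{+\infty\}\):

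\begin{enumerate}[leftmargin=*,itemsep=0.15em]
  \item \(c_{s,r}\ge m_{s,r}>-\infty\) by item 4 of \cref{ass:path-composition}, and an empty path class gives \(+\infty\).
  \item \(\Gamma\) is bounded below by hypothesis.
  \item Hence \(V(r,y)\ge m_{r,T}+\inf\Gamma>-\infty\).
\end{enumerate}

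So no \(\infty-\infty\) ever arises. Adding the constant \(c_{s,r}(x,y)\) commutes with the infimum over \(z\), including when that constant equals \(+\infty\), since then both sides are \(+\infty\). Infima over a product index set may always be taken in either order. The identity therefore holds in \(\R\cup\{+\infty\}\), including the degenerate case where every path class is empty and both sides equal \(+\infty\).

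For the interpretive sentence, note that iterating (P2) over nested slices \(s<r_1<r_2<T\) is consistent with (P1) applied on \([s,r_2]\). This compatibility is exactly the associativity of min-plus composition of the kernels \(c\).
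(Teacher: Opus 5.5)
Your proof is correct and is essentially the paper's argument: substitute the definition of \(V(r,y)\), exchange the two infima, and apply (P1) with \(t=T\). Your check that every summand lies in \(\R\cup\{+\infty\}\), using the interval lower bounds of the composability assumption and the lower bound on \(\Gamma\), makes explicit the extended-real bookkeeping that the paper leaves implicit.
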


\subsection{The fixed-horizon energy gauge}

Let \(\mathcal C\subset\mathcal M\) be a terminal set and let
\(\Path_T(x,\mathcal C)\) be an admissible class of absolutely continuous
paths on \([0,T]\) starting at \(x\) and ending in \(\mathcal C\).  Suppose
the class is closed under orientation-preserving absolutely continuous
reparameterization and every finite-length path admits its constant-speed
reparameterization within the class.  Define
\[
  D(x,\mathcal C)=\inf_{\gamma\in\Path_T(x,\mathcal C)}\Len(\gamma),
  \qquad
  \mathcal E_T(x,\mathcal C)
  =\inf_{\gamma\in\Path_T(x,\mathcal C)}
  \frac12\int_0^T\norm{\dot\gamma_t}^2\,\dd t.
\]

\begin{theorem}[Length--energy gauge for RDGC]
\label{thm:length-energy-gauge}
Under the preceding reparameterization assumptions,
\begin{equation}
  \boxed{
  \mathcal E_T(x,\mathcal C)
  =\frac{D(x,\mathcal C)^2}{2T}.}
  \tag{P3}
\end{equation}
The identity holds with value \(+\infty\).  When the common value is finite,
a finite-energy path attains the energy minimum if and only if it has constant
speed and attains the length minimum.  Consequently, whenever
the RDGC path class has this reparameterization property,
\[
  \mathcal E_{K,\F,T}(S_0;H)
  =\frac{D_{K,\F}(S_0;H)^2}{2T}.
\]
Thus fixed time is a gauge choice that removes the reparameterization
degeneracy of the length functional without changing its optimal geometric
path.
\end{theorem}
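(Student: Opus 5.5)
The identity (P3) is the classical length--energy comparison via Cauchy--Schwarz, combined with the reparameterization closure of \(\Path_T(x,\mathcal C)\). I will prove the two inequalities separately and then characterize equality.

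\emph{Lower bound.} Take any \(\gamma\in\Path_T(x,\mathcal C)\). If its energy is infinite there is nothing to prove. Otherwise Cauchy--Schwarz on \([0,T]\) gives
\[
  \Len(\gamma)^2=\Bigl(\int_0^T\norm{\dot\gamma_t}\,\dd t\Bigr)^2
  \le T\int_0^T\norm{\dot\gamma_t}^2\,\dd t .
\]
Hence \(\frac12\int_0^T\norm{\dot\gamma_t}^2\dd t\ge \Len(\gamma)^2/(2T)\ge D(x,\mathcal C)^2/(2T)\). Taking the infimum yields \(\mathcal E_T\ge D^2/(2T)\). If \(D=+\infty\), every admissible path has infinite length and therefore infinite energy, so \(\mathcal E_T=+\infty\). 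This also covers an empty class.

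\emph{Upper bound.} Assume \(D<\infty\) and fix \(\varepsilon>0\). Choose \(\gamma\) with \(\Len(\gamma)\le D+\varepsilon\). By hypothesis its constant-speed reparameterization \(\tilde\gamma\) lies in the class. It has the same length and speed \(\norm{\dot{\tilde\gamma}_t}=\Len(\gamma)/T\) almost everywhere, so its energy is exactly \(\Len(\gamma)^2/(2T)\le(D+\varepsilon)^2/(2T)\). Letting \(\varepsilon\to0\) gives \(\mathcal E_T\le D^2/(2T)\). The main technical point is this step: one must check that an absolutely continuous path of finite length admits an absolutely continuous constant-speed reparameterization with the same length. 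I would use the arclength function \(s(t)=\int_0^t\norm{\dot\gamma}\). Its generalized inverse composed with \(\gamma\) is well defined, because \(\gamma\) is constant on the intervals where \(s\) is constant, and it is Lipschitz. The paper simply assumes membership in the class, so only the length and speed computation needs justification here.

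\emph{Equality case.} Suppose the common value is finite and \(\gamma\) has finite energy equal to \(D^2/(2T)\). Then both inequalities in the chain above are equalities. Equality \(\Len(\gamma)=D\) means \(\gamma\) attains the length minimum. Equality in Cauchy--Schwarz means \(\norm{\dot\gamma}\) is almost everywhere equal to a constant. Conversely, a constant-speed length minimizer has energy \(D^2/(2T)\) by the computation in the upper bound.

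\emph{RDGC specialization.} Apply the general statement with \(\mathcal M\) the admissible component of \(\mathcal S_\F(H)\), the affine-invariant norm \(\norm{\cdot}_{S,\AI}\), and \(\mathcal C=\Ck\), using \cref{def:restricted-complexity}. The definition allows an arbitrary horizon, but any admissible path can be affinely rescaled to \([0,T]\) without changing its \(\AI\)-length, since length is reparameterization invariant. Therefore \(D(S_0,\Ck)\) computed on \([0,T]\) equals \(D_{K,\F}(S_0;H)\), and the displayed identity follows.
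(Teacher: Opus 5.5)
Your proof is correct and essentially identical to the paper's: the same Cauchy--Schwarz chain $\frac12\int_0^T\norm{\dot\gamma_t}^2\,\dd t\ge \Len(\gamma)^2/(2T)\ge D(x,\mathcal C)^2/(2T)$, constant-speed reparameterization of near-length-minimizing paths for the reverse inequality, the finite-energy-implies-finite-length argument for $D=+\infty$, and the minimizer characterization via equality in both inequalities. Your additions are harmless refinements: the arclength-inverse sketch justifies a reparameterization that the paper simply assumes, and the horizon-rescaling remark in the RDGC step needs the admissible class to be closed under affine time changes between horizons, whereas the paper applies the identity directly to the RDGC class on the fixed horizon $[0,T]$.
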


\begin{remark}[A checkable intervention-cube margin]
The convexity hypothesis (MJ2) follows from a uniform Hessian margin.  For
example, suppose \(\mathcal U_0\subset(-\delta,1+\delta)^m\),
\[
  \operatorname{Hess}W_0\succeq\alpha_W g,
  \qquad
  \norm{\operatorname{Hess}w_i}_{\mathrm{op}}\le b_i,
\]
uniformly on \([0,T]\times\mathcal N\), and
\(\alpha_W>(1+\delta)\sum_i b_i\).  Then every \(W_u\) is geodesically
convex on \(\mathcal U_0\).  The analogous condition
\(\alpha_\Phi>(1+\delta)\sum_i c_i\) for
\(\operatorname{Hess}\Phi_0\succeq\alpha_\Phi g\) and
\(\norm{\operatorname{Hess}\phi_i}_{\mathrm{op}}\le c_i\) treats the terminal
potential.  Affine interventions have \(b_i=c_i=0\), as in
\cref{thm:diag-forced-response}.
\end{remark}

\subsection{Existence and Hamilton--Jacobi characterization}

We next give one sufficient analytic realization.  Let
\((\mathcal M,g)\) be a finite-dimensional Hadamard manifold and
\(\F\subset\mathcal M\) a nonempty closed geodesically convex set.  Admissible
paths are absolutely continuous curves in \(\F\) satisfying
\(\dot\gamma(t)\in\mathcal A_t(\gamma(t))\) almost everywhere, and
\[
  \mathcal A_{s,t}(\gamma)
  =
  \int_s^t
  \{L_r(\gamma(r),\dot\gamma(r))+U(r,\gamma(r))\}\,\dd r.
\]

\begin{assumption}[Tonelli--viability conditions]
\label{ass:tonelli}
The admissible velocity multifunction has nonempty closed convex values and a
Borel graph.  In every smooth coordinate chart, the extended integrand
\[
  \overline L(t,G,Z)
  =L_t(G,Z)+U(t,G)+\iota_{\mathcal A_t(G)}(Z)
\]
is a normal integrand: it is Borel measurable in \(t\), lower semicontinuous
in \((G,Z)\) for almost every \(t\), and convex in \(Z\).  For some
\(p>1\), \(a>0\), and \(b\ge0\), uniformly in \((t,G,Z)\),
\[
  \overline L(t,G,Z)\ge a\norm{Z}_G^p-b.
\]
The viability constraint is sequentially closed under uniformly convergent
base paths and weakly convergent chart velocities.  The terminal cost
\(\Gamma\) is proper, lower semicontinuous, and bounded below, and at least one
finite-action competitor exists whenever the value under consideration is
finite.  The path system is composable in the sense of
\cref{ass:path-composition}.
\end{assumption}

\begin{theorem}[Tonelli existence for restricted paths]
\label{thm:path-tonelli}
Under \cref{ass:tonelli}, every finite value \(V(s,x)\) is attained.  If
pointwise geodesic interpolation preserves admissibility and the complete
action plus terminal cost is strictly geodesically convex for paths with a
common initial state, then the minimizing path is unique.
\end{theorem}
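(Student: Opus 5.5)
We prove existence by the direct method in the calculus of variations and uniqueness by midpoint convexity along geodesic interpolation.

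\textbf{Minimizing sequence and bounds.} Fix \(s\) and \(x\) with \(V(s,x)<\infty\). Choose admissible \(\gamma_n\) on \([s,T]\) with \(\gamma_n(s)=x\) and
\[
\mathcal A_{s,T}(\gamma_n)+\Gamma(\gamma_n(T))\to V(s,x).
\]
Such a sequence exists because \cref{ass:tonelli} supplies a finite-action competitor. Since \(\Gamma\) is bounded below, the actions are bounded above. The coercivity \(\overline L\ge a\norm{Z}_G^p-b\) then gives a uniform bound on \(\int_s^T\norm{\dot\gamma_n}^p\,\dd t\). By H\"older's inequality,
\[
d(\gamma_n(t),\gamma_n(t'))\le C|t-t'|^{1-1/p}.
\]
The curves are therefore equicontinuous, and they all start at \(x\). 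Completeness of the Hadamard manifold (Hopf--Rinow) lets Arzel\`a--Ascoli produce a subsequence converging uniformly to a curve \(\gamma\). This \(\gamma\) stays in the closed set \(\F\).

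\textbf{Lower semicontinuity.} Because all the \(\gamma_n\) lie in a fixed compact ball, finitely many charts cover them. In each chart the metric is uniformly equivalent to the Euclidean one, so the chart velocities are bounded in \(L^p\). Passing to a further subsequence, they converge weakly in \(L^p\), and the weak limit is \(\dot\gamma\). Hence \(\gamma\) is absolutely continuous.

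We then apply the Ioffe/Tonelli lower semicontinuity theorem for normal integrands that are convex in the velocity. Its hypotheses are uniform convergence of the states, weak \(L^1\) convergence of the velocities, and the integrand bounded below by \(-b\). It yields
\[
\int\overline L(t,\gamma,\dot\gamma)\,\dd t\le\liminf_n\int\overline L(t,\gamma_n,\dot\gamma_n)\,\dd t.
\]
Since the indicator \(\iota_{\mathcal A_t}\) is built into \(\overline L\), finiteness of the left side already shows \(\dot\gamma\in\mathcal A_t(\gamma)\) almost everywhere. The assumed sequential closedness of the viability constraint gives admissibility directly as well.

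To pass from charts to the whole interval, partition \([s,T]\) into subintervals on which the limit curve stays inside a single chart. Additivity of the action from \cref{ass:path-composition} then sums the chartwise inequalities. Lower semicontinuity of \(\Gamma\) handles the endpoint. Together these give
\[
\mathcal A(\gamma)+\Gamma(\gamma(T))\le V(s,x),
\]
so \(\gamma\) attains the value.

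\textbf{Uniqueness.} Suppose \(\gamma^0\) and \(\gamma^1\) are two minimizers with the same initial state \(x\). Form the pointwise geodesic midpoint \(\gamma^{1/2}(t)\). By hypothesis it is admissible, and it starts at \(x\). Strict geodesic convexity of the total cost gives
\[
\mathcal A(\gamma^{1/2})+\Gamma(\gamma^{1/2}(T)) < V(s,x)
\]
unless \(\gamma^0=\gamma^1\). Since \(V(s,x)\) is the minimum, \(\gamma^0=\gamma^1\).

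\textbf{Main obstacle.} The step needing the most care is the lower semicontinuity argument on a manifold rather than in \(\R^n\). Two points must be checked: that the weak limits of the chart velocities are chart-independent, and that chartwise integrals glue together. Both are handled by localizing on a uniform subinterval partition, which exists because the limit curve is uniformly continuous and its image is compact.
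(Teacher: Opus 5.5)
Your proof is correct and follows essentially the same route as the paper's proof. Both use the direct method: the coercive \(L^p\) speed bound, H\"older equicontinuity, Arzel\`a--Ascoli on the proper Hadamard manifold, chartwise weak convergence of velocities on a finite subdivision of the limit path with lower semicontinuity for convex normal integrands, lower semicontinuity of \(\Gamma\) at the endpoint, and uniqueness from strict convexity along the pointwise geodesic interpolation. Your observation that the indicator built into \(\overline L\) already forces \(\dot\gamma\in\mathcal A_t(\gamma)\) almost everywhere is a small bonus: it makes the separately assumed sequential viability closure unnecessary for the existence step.
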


For the HJB layer we use a separate, directly checkable control realization.
Suppose \(\F\) is a smooth complete manifold without boundary,
\(\mathcal U\) is a compact metric control space, and admissible paths solve
\[
  \dot G_t=b(t,G_t,a_t)
\]
for measurable controls \(a_t\in\mathcal U\).  Assume \(b\) is continuous,
locally Lipschitz in \(G\) uniformly in \((t,a)\), and tangent to \(\F\).
Let the running cost \(\ell(t,G,a)\) and terminal cost \(\Gamma(G)\) be
continuous and bounded below, and define
\[
  \mathcal H(t,G,p)
  =\sup_{a\in\mathcal U}
  \{\ip{p}{b(t,G,a)}-\ell(t,G,a)\}.
\]

\begin{proposition}[HJB layer with explicit analytic boundary]
\label{prop:path-hjb}
Assume the preceding compact-control realization is nonexplosive on
\([0,T]\), its value \(V\) is finite and continuous, and admissible controls
are closed under concatenation.  Then \(V\) is a viscosity solution of
\begin{equation}
  -\partial_tV(t,G)
  +\mathcal H(t,G,-\dd_GV(t,G))=0,
  \qquad
  V(T,\cdot)=\Gamma.
  \tag{P4}
\end{equation}
At every differentiability point this equation holds classically.  If the
Hamiltonian and terminal data satisfy a comparison principle in a declared
solution class, then \(V\) is the unique viscosity solution in that class.
When the maximizing control is unique, the optimal path satisfies
\[
  \dot G_t=b(t,G_t,a^\star(t,G_t,-\dd_GV(t,G_t)))
\]
at classical points.  Compactness of \(\mathcal U\) gives maximizers and local
velocity compactness; continuity and the uniform local Lipschitz condition
give the short-time realizability used in the viscosity test.  No comparison
or uniqueness claim is made without the stated comparison hypothesis.
\end{proposition}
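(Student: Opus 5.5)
The plan is the standard dynamic-programming verification for compact-control problems, carried out in local charts. Since \(\F\) is a smooth manifold without boundary and the viscosity test is local, we work in a coordinate chart around a test point \((t_0,G_0)\) with \(t_0<T\). There \(b\) becomes a continuous vector field, locally Lipschitz in \(G\) uniformly in \((t,a)\). The proof has three ingredients.

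\emph{Dynamic programming.} Closure of admissible controls under concatenation makes the control path system composable in the sense of \cref{ass:path-composition}. The action is \(\int\ell\), which is additive, and the lower bound in item 4 comes from \(\ell\) bounded below on a finite horizon. Then \cref{cor:path-bellman} gives
\[
  V(t_0,G_0)=\inf_{a}\Bigl\{\int_{t_0}^{t_0+h}\ell(r,G_r,a_r)\,\dd r+V(t_0+h,G_{t_0+h})\Bigr\}
\]
for small \(h>0\).

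\emph{Short-time realizability.} Nonexplosiveness and the uniform local Lipschitz bound give existence and uniqueness of trajectories. On a compact neighborhood, \(|b|\le M\), so every trajectory from \(G_0\) stays in the chart for \(h\) small. Moreover \(G_{t_0+h}-G_0=\int_{t_0}^{t_0+h} b\,\dd r\) with \(|G_r-G_0|\le Mh\). Continuity of \(b\) and \(\ell\), uniform over the compact set \(\mathcal U\), then yields
\[
  \tfrac1h\int_{t_0}^{t_0+h}\bigl[\ip{p}{b(r,G_r,a_r)}-\ell(r,G_r,a_r)\bigr]\dd r
  =\tfrac1h\int_{t_0}^{t_0+h}\bigl[\ip{p}{b(t_0,G_0,a_r)}-\ell(t_0,G_0,a_r)\bigr]\dd r+o(1),
\]
uniformly in the control.

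\emph{Sub- and supersolution tests.} Let \(\varphi\in C^1\) touch \(V\) from above at \((t_0,G_0)\). Fix a constant control \(a\). The DPP together with \(V\le\varphi\) gives
\[
  0\le \tfrac1h\int\ell+\tfrac1h\bigl[\varphi(t_0+h,G_{t_0+h})-\varphi(t_0,G_0)\bigr].
\]
Letting \(h\to0\) gives \(0\le \ell+\partial_t\varphi+\ip{\dd\varphi}{b}\). Taking the supremum over \(a\) yields \(-\partial_t\varphi+\mathcal H(t_0,G_0,-\dd\varphi)\le0\), the subsolution inequality.

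For \(\varphi\) touching from below we argue by contradiction. Suppose \(-\partial_t\varphi+\mathcal H(\cdot,-\dd\varphi)\le-\epsilon\) at \((t_0,G_0)\). By continuity of \(\mathcal H\), which holds because \(\mathcal U\) is compact and \(b,\ell\) are continuous, the inequality persists with \(-\epsilon/2\) near the point. Take an \(\epsilon h/4\)-optimal control in the DPP and use the uniform estimate above. This gives
\[
  \varphi(t_0,G_0)\ge V(t_0,G_0)\ge \varphi(t_0,G_0)+\epsilon h/2-\epsilon h/4,
\]
a contradiction. The step I expect to be the main obstacle is this measurable-control supersolution argument. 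It needs the uniformity in \(a\), which comes from compactness of \(\mathcal U\) and equicontinuity of trajectories, and the fact that \(\varphi\le V\) holds only locally; the trajectory confinement from the second ingredient handles the latter.

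\emph{Terminal data and remaining assertions.} The terminal condition \(V(T,\cdot)=\Gamma\) holds by definition and is attained because \(V\) is continuous. At a differentiability point, \(\varphi\) can be taken with \(\dd\varphi=\dd V\) from both sides, so both inequalities hold and (P4) holds classically. Uniqueness in the declared class is immediate from the assumed comparison principle: any two viscosity solutions with the same data coincide there, and no more is claimed. Finally, at a classical point, equality in the subsolution step together with a unique maximizer \(a^\star\) of the Hamiltonian identifies the optimal velocity as \(b(t,G_t,a^\star(t,G_t,-\dd_GV))\). This uses the standard verification argument along the optimal path, with the DPP equality holding on every subinterval.
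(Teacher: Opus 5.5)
Your proposal is correct and follows essentially the same route as the paper: the Bellman identity on $[t,t+h]$, constant controls for the subsolution inequality, and a near-optimal measurable control with uniform-in-$a$ continuity on a shrinking compact neighborhood for the supersolution inequality. The differences are cosmetic. You run the supersolution step by contradiction through continuity of $\mathcal H$, whereas the paper directly bounds the integrand $\partial_t\varphi+\ell+\dd\varphi[b]$ along the trajectory from below by its infimum over $a$ at the base point, up to $o(1)$. You also explicitly check the composability hypotheses behind the Bellman identity, which the paper takes for granted.
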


The layers above have different force.  \Cref{thm:path-dpp} is an exact
algebraic identity, \cref{thm:length-energy-gauge} fixes the time gauge without
changing the optimal geometric path, \cref{thm:path-tonelli} is a direct-method
theorem, and \cref{prop:path-hjb} is an analytic characterization for the
declared compact-control realization, conditional on comparison only for
uniqueness.

\subsection{Jacobi--Schur response}

Let \(\mathcal Z\) be a finite-dimensional visible parameter space and
\(\Hilb_0\) a real Hilbert space of zero-boundary path fluctuations.  A local
trivialization of the admissible path space writes
\[
  \gamma=Rz+\eta,
  \qquad z\in\mathcal Z,
  \quad \eta\in\Hilb_0,
\]
where \(R\) is a fixed continuous right inverse for the visible boundary or
constraint map.  Let
\(\mathfrak A:\mathcal Z\times\Hilb_0\to\R\) be the resulting action.
We use the Riesz isomorphism to identify \(\Hilb_0^*\) with \(\Hilb_0\).

\begin{theorem}[Jacobi--Schur path-response law]
\label{thm:path-schur}
Suppose \(\mathfrak A\in C^3\) near \((z_0,\eta_0)\), where
\(\eta_0\) is the unique local minimizer at \(z_0\).  Assume the vertical
second variation
\[
  \Jac
  =D^2_{\eta\eta}\mathfrak A(z_0,\eta_0)
\]
is represented by a bounded self-adjoint operator satisfying
\[
  \ip{h}{\Jac h}\ge\mu\norm{h}^2,
  \qquad \mu>0,
  \quad h\in\Hilb_0.
\]
Then there are neighborhoods \(Z_0\) of \(z_0\) and \(N_0\) of \(\eta_0\)
such that every \(z\in Z_0\) has a unique minimizing critical point
\(\eta_\star(z)\in N_0\), and this local minimizing section is \(C^2\).  Write
\[
  \Jac(z)=D^2_{\eta\eta}\mathfrak A(z,\eta_\star(z)).
\]
With
\(\Val(z)=\mathfrak A(z,\eta_\star(z))\),
\begin{equation}
  \boxed{
  D\eta_\star
  =
  -\Jac^{-1}\mathfrak A_{\eta z},}
  \tag{P5}
\end{equation}
and
\begin{equation}
  \boxed{
  D^2\Val
  =
  \mathfrak A_{zz}
  -\mathfrak A_{z\eta}\Jac^{-1}\mathfrak A_{\eta z}.}
  \tag{P6}
\end{equation}
All derivatives on the right are evaluated along the minimizing path.  The
effective value curvature is therefore the Schur complement of the vertical
Jacobi block.
\end{theorem}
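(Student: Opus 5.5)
The plan is to apply the implicit function theorem to the vertical Euler--Lagrange map and then differentiate the value through the resulting section. Define
\[
  F:\mathcal Z\times\Hilb_0\to\Hilb_0,\qquad F(z,\eta)=\nabla_\eta\mathfrak A(z,\eta),
\]
using the Riesz identification. Since \(\mathfrak A\in C^3\), the map \(F\) is \(C^2\). Because \(\eta_0\) is a local minimizer at \(z_0\), it is a critical point, so \(F(z_0,\eta_0)=0\). The partial derivative \(D_\eta F(z_0,\eta_0)=\Jac\) is bounded, self-adjoint and coercive. By Lax--Milgram (or directly: coercivity gives injectivity and closed range, and self-adjointness then gives dense, hence full, range) it is an isomorphism with \(\norm{\Jac^{-1}}\le\mu^{-1}\).

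\textbf{Step 1: the critical section.} The Banach-space implicit function theorem yields neighborhoods \(Z_0\ni z_0\) and \(N_0\ni\eta_0\) and a \(C^2\) map \(\eta_\star:Z_0\to N_0\). For each \(z\in Z_0\), \(\eta_\star(z)\) is the unique zero of \(F(z,\cdot)\) in \(N_0\).

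\textbf{Step 2: the critical point is a minimizer.} This is the step requiring care, since it must hold uniformly in \(z\) rather than only at \(z_0\).
\begin{itemize}
\item By continuity of \(D^2_{\eta\eta}\mathfrak A\), shrink \(Z_0\) and \(N_0\) so that \(N_0\) is a convex ball and
\[
  \ip{h}{D^2_{\eta\eta}\mathfrak A(z,\eta)h}\ge\tfrac{\mu}{2}\norm{h}^2\qquad\text{for all }(z,\eta)\in Z_0\times N_0 .
\]
\item Then \(\mathfrak A(z,\cdot)\) is strictly convex on \(N_0\).
\item A second-order Taylor expansion along the segment from \(\eta_\star(z)\) shows that \(\eta_\star(z)\) is the unique minimizer of \(\mathfrak A(z,\cdot)\) on \(N_0\).
\item It is also the unique critical point there, which is consistent with Step 1.
\end{itemize}
This gives the claimed unique local minimizing section. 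Coercivity also persists, so \(\Jac(z)\) is invertible on \(Z_0\).

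\textbf{Step 3: formula (P5).} Differentiate the identity \(F(z,\eta_\star(z))\equiv0\) in \(z\). The chain rule gives
\[
  \Jac(z)\,D\eta_\star+\mathfrak A_{\eta z}=0,
\]
where \(\mathfrak A_{\eta z}=D_zF\) is viewed as a map \(\mathcal Z\to\Hilb_0\). Inverting \(\Jac(z)\) gives (P5).

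\textbf{Step 4: formula (P6).} The value \(\Val(z)=\mathfrak A(z,\eta_\star(z))\) is \(C^2\) as a composition of a \(C^3\) map with a \(C^2\) map.
\begin{itemize}
\item First derivative: the envelope identity gives
\[
  D\Val=\mathfrak A_z+\ip{\nabla_\eta\mathfrak A}{D\eta_\star}=\mathfrak A_z(z,\eta_\star(z)),
\]
since \(\nabla_\eta\mathfrak A=0\) along the section.
\item Second derivative: differentiating once more gives
\[
  D^2\Val=\mathfrak A_{zz}+\mathfrak A_{z\eta}D\eta_\star .
\]
\item Substituting (P5) yields
\[
  D^2\Val=\mathfrak A_{zz}-\mathfrak A_{z\eta}\Jac^{-1}\mathfrak A_{\eta z},
\]
which is (P6).
\end{itemize}
The only bookkeeping needed here is that \(\mathfrak A_{z\eta}\) is the adjoint of \(\mathfrak A_{\eta z}\), which holds by symmetry of the second derivative of a \(C^2\) function. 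This makes the Schur complement self-adjoint on \(\mathcal Z\). All derivatives are evaluated at \((z,\eta_\star(z))\), as stated.
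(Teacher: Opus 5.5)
Your proposal is correct and follows the paper's proof essentially step for step. Like the paper, you apply the Hilbert-space implicit function theorem to the vertical first-order condition, using invertibility of the coercive self-adjoint \(\Jac\), and use persistence of coercivity to make the critical section the unique local minimizer; you then differentiate the first-order condition for (P5) and use the envelope identity plus a second differentiation for (P6), where your strict-convexity argument on a shrunken convex ball simply spells out the paper's one-line remark that coercivity persists after shrinking the neighborhood.
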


\begin{corollary}[Coercive kinetic bridge on a Hadamard family]
\label{cor:rdgc-jacobi-bridge}
Let \(\F\) be a finite-dimensional Hadamard manifold realized as a totally
geodesic submanifold of the ambient geometry, and let \(\gamma_0\) be the constant-speed geodesic from
\(x\in\F\) to a fixed endpoint \(q\in\F\).  On zero-boundary fluctuations
\(h\in H_0^1(\gamma_0^*T\F)\), the second variation of kinetic energy is
\[
  I(h,h)=\int_0^T
  \left(\norm{D_th}^2
  -\ip{R(\dot\gamma_0,h)h}{\dot\gamma_0}\right)\dd t
  \ge\int_0^T\norm{D_th}^2\dd t.
\]
Hence the weak Jacobi operator is coercive in the derivative norm on
\(H_0^1\).  If \(q\) is the unique RDGC target projection, then
\cref{thm:length-energy-gauge} identifies \(\gamma_0\) with the energy-gauged
RDGC minimizer.  Every \(C^3\) endpoint perturbation that preserves the fixed
boundary chart therefore satisfies the full Schur formula in
\cref{thm:path-schur}, including its direct visible Hessian term.  Affine
additive-action intensities with fixed endpoint additionally satisfy the
negative-Gram formula in \cref{cor:path-interaction} locally.
\end{corollary}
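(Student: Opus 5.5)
The plan is to obtain the corollary by combining three ingredients: the second-variation formula for fixed-endpoint kinetic energy, the sign of sectional curvature on a Hadamard manifold, and the abstract response law of \cref{thm:path-schur}.

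\textbf{Second variation.} Let \(\gamma_s\) be a smooth variation of \(\gamma_0\) with \(\partial_s\gamma_s|_{s=0}=h\) and \(h(0)=h(T)=0\). Differentiating \(E(\gamma_s)=\tfrac12\int_0^T\norm{\dot\gamma_s}^2\dd t\) twice in \(s\) gives
\[
  \int_0^T\Bigl(\norm{D_th}^2+\ip{R(h,\dot\gamma_0)\dot\gamma_0}{h}\Bigr)\dd t
  +\Bigl[\ip{D_s\partial_s\gamma}{\dot\gamma_0}\Bigr]_0^T
  -\int_0^T\ip{D_s\partial_s\gamma}{D_t\dot\gamma_0}\dd t .
\]
The boundary term vanishes because the endpoints are fixed. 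The last term vanishes because \(\gamma_0\) is a geodesic. Total geodesy of \(\F\) means its intrinsic curvature tensor is the restriction of the ambient one, and geodesics of \(\F\) are ambient geodesics, so the computation may be done in either geometry. Using the symmetries of \(R\), the curvature term equals \(-\ip{R(\dot\gamma_0,h)h}{\dot\gamma_0}\), which is the stated index form \(I(h,h)\).

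\textbf{Coercivity.} On a Hadamard manifold the sectional curvature is nonpositive. Hence
\[
  -\ip{R(\dot\gamma_0,h)h}{\dot\gamma_0}=-K(\dot\gamma_0,h)\,\bigl|\dot\gamma_0\wedge h\bigr|^2\ge0
\]
pointwise, with the paper's sign convention. This gives \(I(h,h)\ge\int_0^T\norm{D_th}^2\dd t\). Parallel-transport a frame along \(\gamma_0\) to identify \(H_0^1(\gamma_0^*T\F)\) with \(H_0^1([0,T];\R^n)\). In this frame \(D_th\) becomes the ordinary derivative, and the Poincaré inequality \(\int\norm{h}^2\le (T/\pi)^2\int\norm{\dot h}^2\) shows that \(\norm{D_th}_{L^2}\) is an equivalent norm on \(H_0^1\). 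So the weak Jacobi operator is bounded and coercive, and the hypothesis of \cref{thm:path-schur} holds with \(\mu=1\) in the derivative norm.

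\textbf{Identification and response.} Since \(q\) is the unique projection, \cref{prop:rdgc-target-projection} and \cref{thm:length-energy-gauge} identify the energy-gauged RDGC minimizer with the constant-speed geodesic \(\gamma_0\) from \(x\) to \(q\). It is unique because geodesics on a Hadamard manifold are unique. To apply \cref{thm:path-schur}, write paths in a fixed boundary chart as \(\gamma=Rz+\eta\). Here \(R\) is built, for instance, from the exponential map at \(\gamma_0\) and a cutoff lift of endpoint data, and \(\eta\in H_0^1\). A \(C^3\) endpoint perturbation preserving this chart makes \(\mathfrak A\) a \(C^3\) functional on \(\mathcal Z\times\Hilb_0\), because kinetic energy is smooth on \(H^1\) paths in a chart. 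The vertical Hessian at \(z_0\) is exactly \(I\). Coercivity at \(z_0\) therefore yields (P5) and (P6), including the visible term \(\mathfrak A_{zz}\).

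For affine additive intensities with fixed endpoint, \(\mathfrak A_{zz}=0\) and \(\mathfrak A_{\eta z}\) is constant in \(z\). The negative-Gram formula of \cref{cor:path-interaction} then follows locally, on the neighborhood where coercivity persists.

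\textbf{Main obstacle.} The hardest step is verifying \(C^3\) regularity of \(\mathfrak A\) in the trivialization, together with the identification of the weak second variation with a bounded self-adjoint operator on \(H_0^1\). I expect to handle this by working in exponential coordinates along the compact image of \(\gamma_0\), where the metric coefficients are smooth and bounded. Composition with an \(H^1\subset C^0\) path is then smooth, by standard Nemytskii-operator arguments.
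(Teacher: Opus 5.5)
Your proposal is correct and follows the paper's proof essentially step for step. The shared steps are the classical fixed-endpoint index form, with total geodesy letting you use the restricted curvature; nonpositive sectional curvature for the sign; Poincar\'e for equivalence of the derivative norm on \(H_0^1\); and then \cref{thm:length-energy-gauge} together with \cref{thm:path-schur,cor:path-interaction}. Your explicit boundary chart and Nemytskii remark only spell out what the paper leaves implicit.

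One small slip is in the curvature sign. The symmetries of \(R\) give \(\ip{R(h,\dot\gamma_0)\dot\gamma_0}{h}=+\ip{R(\dot\gamma_0,h)h}{\dot\gamma_0}\), not its negative. In the paper's convention, where \(\ip{R(\dot\gamma_0,h)h}{\dot\gamma_0}=K\,|\dot\gamma_0\wedge h|^2\), the raw term in your second variation should therefore be \(\ip{R(h,\dot\gamma_0)h}{\dot\gamma_0}\). Your two sign flips cancel, so the final index form and inequality are right.
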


\subsection{Global Hadamard M\"obius--Jacobi realization}

The abstract Schur law becomes a global theorem on a whole intervention cube
for mechanical actions on Hadamard path space.  Let \((\mathcal N,g)\) be a
finite-dimensional Hadamard manifold, fix \(x_0\in\mathcal N\) and \(T>0\),
and set
\[
  \Omega_{x_0}
  =\{\gamma\in H^1([0,T],\mathcal N):\gamma(0)=x_0\}.
\]
Let the open set \(\mathcal U_0\Subset\mathcal U\subset\R^m\) contain
\([0,1]^m\), and
consider
\[
  W_u=W_0+\sum_{i=1}^m u_iw_i,
  \qquad
  \Phi_u=\Phi_0+\sum_{i=1}^m u_i\phi_i,
\]
\begin{equation}
  \mathcal A_u(\gamma)
  =\int_0^T
  \left(\frac12\norm{\dot\gamma}^2+W_u(t,\gamma(t))\right)\dd t
  +\Phi_u(\gamma(T)).
  \tag{MJ1}
\end{equation}

\begin{theorem}[Global path-space M\"obius--Jacobi theorem]
\label{thm:global-mobius-jacobi}
Fix \(r\ge2\).  Assume the bulk potentials are continuous in \(t\), all bulk
and terminal potentials are \(C^{r+2}\) in the state variable with covariant
derivatives locally uniform in \((t,u)\), and, for every
\(u\in\mathcal U_0\),
\begin{equation}
  \operatorname{Hess}W_u(t,\cdot)\succeq0,
  \qquad
  \operatorname{Hess}\Phi_u\succeq0.
  \tag{MJ2}
\end{equation}
Assume also that the values and gradients of \(W_u(t,\cdot)\) and \(\Phi_u\)
at \(x_0\) are uniformly bounded over \([0,T]\times\mathcal U_0\).
Then the following conclusions hold.

\begin{enumerate}[leftmargin=*,itemsep=0.35em]
  \item For every \(u\in\mathcal U_0\), \(\mathcal A_u\) has a unique global
  minimizer \(\gamma_u\), and \(u\mapsto\gamma_u\) is \(C^r\) on
  \(\mathcal U_0\).  No global lower bound on the potentials is required.
  The minimizer satisfies
  \begin{equation}
    D_t\dot\gamma_u=\operatorname{grad}W_u(t,\gamma_u),
    \qquad
    \dot\gamma_u(T)+\operatorname{grad}\Phi_u(\gamma_u(T))=0.
    \tag{MJ3}
  \end{equation}

  \item On
  \(\mathcal V_u=\{\xi\in H^1(\gamma_u^*T\mathcal N):\xi(0)=0\}\), the
  second variation is
  \begin{equation}
  \begin{aligned}
    \mathfrak B_u(\xi,\eta)
    ={}&\int_0^T\bigl[
    \ip{D_t\xi}{D_t\eta}
    -\ip{R(\xi,\dot\gamma_u)\dot\gamma_u}{\eta}
    +\operatorname{Hess}W_u[\xi,\eta]\bigr]\dd t\\
    &+\operatorname{Hess}\Phi_u[\xi(T),\eta(T)],
  \end{aligned}
  \tag{MJ4}
  \end{equation}
  and it obeys the uniform bound
  \begin{equation}
    \mathfrak B_u(\xi,\xi)
    \ge\int_0^T\norm{D_t\xi}^2\dd t.
    \tag{MJ5}
  \end{equation}
  Hence its weak Jacobi map
  \(\mathfrak J_u:\mathcal V_u\to\mathcal V_u^*\) is a positive
  self-adjoint isomorphism.

  \item Define
  \[
    \Delta_i(\gamma)
    =\int_0^T w_i(t,\gamma(t))\dd t+\phi_i(\gamma(T)),
    \qquad
    \mathfrak f_i^u=D\Delta_i(\gamma_u),
  \]
  and \(\chi_i^u=\mathfrak J_u^{-1}\mathfrak f_i^u\).  Then
  \begin{equation}
    \partial_{u_i}\gamma_u=-\chi_i^u.
    \tag{MJ6}
  \end{equation}
  The response is equivalently the unique solution of
  \begin{equation}
  \begin{aligned}
    J_u\chi_i^u&=\operatorname{grad}w_i(t,\gamma_u),
    &\chi_i^u(0)&=0,\\
    D_t\chi_i^u(T)+\operatorname{Hess}\Phi_u\,\chi_i^u(T)
    &=\operatorname{grad}\phi_i(\gamma_u(T)),
  \end{aligned}
  \tag{MJ7}
  \end{equation}
  where
  \[
    J_u\xi=-D_t^2\xi-R(\xi,\dot\gamma_u)\dot\gamma_u
    +\operatorname{Hess}W_u\,\xi.
  \]
  Thus bulk and terminal intervention forces share one Green--Jacobi inverse.

  \item The value \(\mathcal V(u)=\mathcal A_u(\gamma_u)\) is \(C^r\) and
  \begin{equation}
    \partial_{u_i}\mathcal V=\Delta_i(\gamma_u),
    \qquad
    \partial_{u_i u_j}^2\mathcal V
    =-\mathfrak B_u(\chi_i^u,\chi_j^u).
    \tag{MJ8}
  \end{equation}
  Equivalently,
  \(D_{uu}^2\mathcal V=-\mathfrak F_u^*\mathfrak J_u^{-1}\mathfrak F_u
  \preceq0\), where \(\mathfrak F_ua=\sum_i a_i\mathfrak f_i^u\).
  More precisely,
  \begin{equation}
  \begin{aligned}
    \frac{\norm{\mathfrak F_ua}_{\mathcal V_u^*}^2}
    {\norm{\mathfrak J_u}}
    &\le -a^\top D_{uu}^2\mathcal V(u)a\\
    &\le
    \norm{\mathfrak J_u^{-1}}
    \norm{\mathfrak F_ua}_{\mathcal V_u^*}^2,
  \end{aligned}
  \tag{MJ8a}
  \end{equation}
  and therefore
  \begin{equation}
    \ker(-D_{uu}^2\mathcal V)=\ker\mathfrak F_u,
    \qquad
    \rank(-D_{uu}^2\mathcal V)=\rank\mathfrak F_u.
    \tag{MJ8b}
  \end{equation}

  \item For \(F(A)=\mathcal V(\mathbf1_A)\) and
  \(\zeta_S=\sum_{B\subseteq S}(-1)^{|S|-|B|}F(B)\), every
  \(|S|\le r\) satisfies
  \begin{equation}
    \zeta_S
    =\int_{[0,1]^S}
    \partial_S^{|S|}\mathcal V
    \left(\sum_{i\in S}u_ie_i\right)\prod_{i\in S}\dd u_i.
    \tag{MJ9}
  \end{equation}
  In particular,
  \begin{equation}
  \boxed{
    \zeta_{\{i,j\}}
    =-\int_0^1\!\int_0^1
    \mathfrak B_{se_i+te_j}
    (\chi_i^{se_i+te_j},\chi_j^{se_i+te_j})\,\dd s\,\dd t.}
    \tag{MJ10}
  \end{equation}
  The same formula on the face
  \(u=\mathbf1_C+se_i+te_j\) gives the conditional pair contrast for every
  background \(C\cap\{i,j\}=\varnothing\), with every coordinate outside
  \(C\cup\{i,j\}\) fixed at zero.
\end{enumerate}
\end{theorem}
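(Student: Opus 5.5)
The plan is to work on the Hilbert manifold \(\Omega_{x_0}\) and establish the five parts in order. The second variation and coercivity (part 2) are the engine for everything else, so I would derive them first along an arbitrary admissible path and only then specialize to minimizers.

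\emph{Step 1: geodesic convexity of the action.} Along a geodesic variation \(\gamma_s\) in \(\Omega_{x_0}\) (pointwise geodesics \(s\mapsto\gamma_s(t)\)), the second derivative of \(\mathcal A_u\) has the form (MJ4). The identity is the standard energy second variation plus \(\operatorname{Hess}W_u\) and \(\operatorname{Hess}\Phi_u\). On a Hadamard manifold \(-\ip{R(\xi,\dot\gamma)\dot\gamma}{\xi}\ge0\), and (MJ2) gives nonnegativity of the Hessian terms, so (MJ5) follows.

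\emph{Step 2: existence, uniqueness and Euler--Lagrange.}
\begin{itemize}
\item Take the function \(t\mapsto d(\gamma(t),x_0)\). Geodesic convexity of \(W_u(t,\cdot)\) together with the bounded value and gradient at \(x_0\) gives the affine lower bound \(W_u(t,y)\ge -C-C\,d(x_0,y)\), and similarly for \(\Phi_u\).
\item Since \(d(\gamma(t),x_0)\le\sqrt{T}\,\norm{\dot\gamma}_{L^2}\), the action is bounded below by \(\tfrac12\norm{\dot\gamma}_{L^2}^2-C'(1+\norm{\dot\gamma}_{L^2})\). This shows coercivity without any global lower bound on the potentials.
\item Weak lower semicontinuity (Hopf--Rinow with Arzel\`a--Ascoli in charts) then gives a minimizer.
\item Strict convexity from (MJ5) gives uniqueness: two minimizers joined by the pointwise geodesic homotopy, which starts at \(x_0\) and has \(\int\norm{D_t\xi}^2>0\) unless the two coincide.
\item The first variation gives (MJ3) by integration by parts. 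The free endpoint produces the Robin condition.
\end{itemize}

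\emph{Step 3: smooth dependence and response.}
\begin{itemize}
\item At the minimizer, \(\mathfrak B_u\) is bounded on \(\mathcal V_u\) and bounded below by \(\int\norm{D_t\xi}^2\), which controls the \(H^1\) norm via \(\xi(0)=0\) and Poincar\'e. Lax--Milgram then makes \(\mathfrak J_u\) a positive self-adjoint isomorphism.
\item Apply the implicit function theorem to \(D_\gamma\mathcal A_u=0\) in a chart of \(\Omega_{x_0}\) around \(\gamma_u\), for example the exponential chart \(\xi\mapsto\exp_{\gamma_u}\xi\). The regularity \(C^{r+2}\) in state, with locally uniform derivatives, makes \(D\mathcal A\) a \(C^r\) map. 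This gives a local \(C^r\) branch.
\item Global uniqueness identifies this branch with \(\gamma_u\) on all of \(\mathcal U_0\).
\item Differentiating \(D\mathcal A_u(\gamma_u)=0\) in \(u_i\) gives \(\mathfrak J_u\partial_i\gamma_u+\mathfrak f_i^u=0\), which is (MJ6).
\item The strong form (MJ7) comes by integrating the weak equation by parts and using elliptic regularity of the linear ODE boundary problem.
\end{itemize}

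\emph{Step 4: value and interactions.}
\begin{itemize}
\item Envelope: \(\partial_i\mathcal V=\partial_{u_i}\mathcal A_u(\gamma_u)=\Delta_i(\gamma_u)\).
\item Differentiating again gives \(\partial_{ij}\mathcal V=\mathfrak f_i^u(\partial_j\gamma_u)=-\mathfrak f_i^u(\chi_j)=-\mathfrak B_u(\chi_i,\chi_j)\), equivalently \(-\mathfrak F^*\mathfrak J^{-1}\mathfrak F\).
\item The two-sided bound (MJ8a) is the standard bound for a positive isomorphism: \(\ip{f}{\mathfrak J^{-1}f}\) lies between \(\norm f^2/\norm{\mathfrak J}\) and \(\norm{\mathfrak J^{-1}}\norm f^2\).
\item (MJ8b) follows because the quadratic form vanishes exactly on \(\ker\mathfrak F_u\).
\item (MJ9) is the fundamental theorem of calculus applied iteratively. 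The Möbius sum over \(B\subseteq S\) is the iterated difference \(\prod_{i\in S}(\text{shift}_i-1)\) applied to \(\mathcal V\) on the face, and each difference is an integral of \(\partial_i\). This needs \(\mathcal V\in C^{|S|}\) and \(|S|\le r\).
\item (MJ10) and the conditional version substitute (MJ8) on the relevant face, which lies in \([0,1]^m\subset\mathcal U_0\).
\end{itemize}

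The main obstacle is Step 2/3 jointly: obtaining global existence without lower bounds, and upgrading the local IFT branch to a \(C^r\) map on all of \(\mathcal U_0\). The upgrade needs uniform coercivity, so that the local branches patch, and a careful \(C^r\) regularity of the action map on the Hilbert manifold \(H^1\). I would handle it by working in exponential charts centered at \(\gamma_u\) and invoking uniqueness to glue.
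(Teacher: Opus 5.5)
Your proposal is correct and follows essentially the same route as the paper's proof: affine support bounds at $x_0$ give the $H^1$-coercivity estimate and Arzel\`a--Ascoli existence, strict convexity along the pointwise geodesic homotopy gives uniqueness and (MJ5), Lax--Milgram handles $\mathfrak J_u$, the Hilbert implicit-function theorem is glued across $\mathcal U_0$, and the envelope, Gram and iterated fundamental-theorem-of-calculus arguments give (MJ6)--(MJ10). The one step to make explicit is that the gluing rests on convexity, not on uniqueness of minimizers alone: each implicit-function branch consists of critical paths, and a critical path of an action that is convex along geodesic homotopies is a global minimizer, hence equals $\gamma_u$.
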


\begin{corollary}[Arbitrary finite-order Green--Jacobi recursion]
\label{cor:all-order-mobius-jacobi}
Choose a smooth local path chart, let \(I\subseteq[m]\) satisfy
\(1\le|I|\le r\), and write
\(\eta_I^u=\partial_I^{|I|}\gamma_u\), and interpret the displayed higher
derivatives in that chart (equivalently, under one fixed covariant
differentiation convention).  Let \(\Pi(J)\) denote the set of
partitions of \(J\), with \(\Pi(\varnothing)=\{\varnothing\}\), and put
\(\mathscr F(u,\gamma)=D_\gamma\mathcal A_u(\gamma)\).  Then
\begin{equation}
  \eta_I^u=-\mathfrak J_u^{-1}\mathcal R_I^u,
  \tag{MJ11}
\end{equation}
where
\begin{equation}
  \mathcal R_I^u
  =\sum_{\substack{S\subseteq I,\ |S|\le1,\ \pi\in\Pi(I\setminus S)\\
  (S,\pi)\ne(\varnothing,\{I\})}}
  D_u^S D_\gamma^{|\pi|}\mathscr F
  \bigl[\eta_B^u\bigr]_{B\in\pi}.
  \tag{MJ12}
\end{equation}
Thus the same inverse Jacobi operator recursively generates every path
derivative through order \(r\) and, through (MJ9), every Boolean M\"obius
effect of that order.  Since \(r\) may be prescribed arbitrarily, the result
covers any fixed finite order under the corresponding \(C^{r+2}\) regularity;
\(C^\infty\) data give all finite orders simultaneously.  If \(r\ge3\), then
for distinct \(i,j,k\),
\begin{equation}
  \boxed{
  \begin{aligned}
  \partial_{u_i u_j u_k}^3\mathcal V
  ={}&-D_\gamma^3\mathcal A_u[\chi_i,\chi_j,\chi_k]\\
  &+D_\gamma^2\Delta_i[\chi_j,\chi_k]
  +D_\gamma^2\Delta_j[\chi_i,\chi_k]
  +D_\gamma^2\Delta_k[\chi_i,\chi_j].
  \end{aligned}}
  \tag{MJ13}
\end{equation}
\end{corollary}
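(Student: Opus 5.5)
The plan is to differentiate the Euler--Lagrange identity repeatedly, using the regularity supplied by \cref{thm:global-mobius-jacobi}. In a fixed smooth local path chart, the stationarity condition (MJ3), in weak form, reads
\[
  \mathscr F(u,\gamma_u)=D_\gamma\mathcal A_u(\gamma_u)=0\in\mathcal V_u^*
\]
for all \(u\in\mathcal U_0\). The theorem gives that \(u\mapsto\gamma_u\) is \(C^r\). Because the potentials are \(C^{r+2}\) in the state, \(\mathscr F\) is \(C^{r+1}\) in \(\gamma\) as a map from \(H^1\) paths into the dual. Hence \(u\mapsto\mathscr F(u,\gamma_u)\) may be differentiated \(|I|\le r\) times.

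\textbf{Step 1: the recursion (MJ11)--(MJ12).} For distinct indices \(I\), I apply the multivariate Fa\`a di Bruno formula with respect to the set partition of \(I\). A mixed derivative \(\partial_I\) of \(\mathscr F(u,\gamma_u)\) is obtained as follows. Some subset \(S\subseteq I\) of derivatives falls on the explicit \(u\)-dependence. The remaining indices \(I\setminus S\) are distributed over blocks \(\pi\), and each block \(B\) contributes a factor \(\eta_B^u\) inserted into \(D_\gamma^{|\pi|}\). Since \(W_u\) and \(\Phi_u\) are affine in \(u\), \(\mathscr F\) is affine in \(u\), so every term with \(|S|\ge2\) vanishes. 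This leaves exactly the index set in (MJ12) together with the single term \((S,\pi)=(\varnothing,\{I\})\), which equals \(D_\gamma\mathscr F(u,\gamma_u)[\eta_I^u]\).

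At a critical point, the chart second derivative \(D_\gamma\mathscr F\) coincides with the covariant second variation \(\mathfrak B_u\), because the Christoffel correction multiplies \(D_\gamma\mathcal A_u(\gamma_u)=0\). So this term is \(\mathfrak J_u\eta_I^u\). Setting the total derivative to zero and applying the isomorphism \(\mathfrak J_u^{-1}\) from (MJ5) gives (MJ11). The main point needing care is this chart/covariant identification: only the top-order term \(D_\gamma\mathscr F\) is chart-independent, whereas the lower-order multilinear terms in \(\mathcal R_I^u\) are chart-dependent. This is why the statement fixes one chart, or equivalently one covariant convention, throughout. Combining with (MJ9) then yields the M\"obius statement for every order up to \(r\).

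\textbf{Step 2: preparing the third-order formula.} I start from \(\partial_{u_i}\mathcal V=\Delta_i(\gamma_u)\) in (MJ8) and differentiate in \(u_j\) and then \(u_k\):
\[
  \partial^3_{ijk}\mathcal V
  =D^2_\gamma\Delta_i[\eta_j,\eta_k]+D\Delta_i[\eta_{jk}].
\]
By (MJ6), \(\eta_j=-\chi_j\), so the first term equals \(D^2_\gamma\Delta_i[\chi_j,\chi_k]\). For the second term, (MJ12) with \(I=\{j,k\}\) gives
\[
  \mathcal R_{jk}
  =D^2\Delta_j[\eta_k,\cdot]+D^2\Delta_k[\eta_j,\cdot]
  +D^3_\gamma\mathcal A_u[\eta_j,\eta_k,\cdot],
\]
where I used \(D_{u_j}\mathscr F=D\Delta_j\).

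\textbf{Step 3: symmetry to finish (MJ13).} By self-adjointness of \(\mathfrak J_u\),
\[
  D\Delta_i[\eta_{jk}]
  =-\langle\mathfrak f_i,\mathfrak J_u^{-1}\mathcal R_{jk}\rangle
  =-\mathcal R_{jk}[\chi_i].
\]
Substituting \(\eta=-\chi\) gives
\[
  D\Delta_i[\eta_{jk}]
  =D^2\Delta_j[\chi_k,\chi_i]+D^2\Delta_k[\chi_j,\chi_i]
  -D^3_\gamma\mathcal A_u[\chi_i,\chi_j,\chi_k].
\]
Adding the first term from Step 2 produces (MJ13). The expression is symmetric in \(i,j,k\), as it must be. Note also that \(D^3_\gamma\mathcal A_u\) here is chart-dependent, but the total expression is not. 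I expect the Fa\`a di Bruno bookkeeping and the chart convention in Step 1 to be the main obstacle; the remaining steps are routine algebra once the recursion is established.
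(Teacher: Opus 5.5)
Your proposal is correct and follows the paper's proof: you apply the set-indexed Fa\`a di Bruno formula to \(\mathscr F(u,\gamma_u)=0\), use affineness in \(u\) to restrict to \(|S|\le1\), and identify the \((\varnothing,\{I\})\) term with \(\mathfrak J_u\eta_I^u\); for (MJ13) you eliminate \(\partial^2_{u_ju_k}\gamma_u\) from the second derivative of \(\partial_{u_i}\mathcal V=\Delta_i(\gamma_u)\) through the \(I=\{j,k\}\) recursion and self-adjointness of \(\mathfrak J_u\). One minor point: the paper's smoothness lemma only gives \(\mathcal A_u\in C^{r+1}\), hence \(\mathscr F\in C^r\) rather than the \(C^{r+1}\) you state, but \(C^r\) is all your differentiation needs.
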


The intermediate chart derivatives \(\eta_I^u\) and the displayed recursive
representation depend on the chosen path chart, or equivalently on the fixed
covariant-differentiation convention.  The scalar derivatives
\(\partial_I^{|I|}\mathcal V\) and the resulting Boolean M\"obius effects are
intrinsic.

\subsection{Hard-target response on a smooth active stratum}

We now differentiate the terminal constraint itself.  This gives the missing
hard-target counterpart of the Robin response above.  Let
\(c:\mathcal U\times\mathcal N\to\R\) be smooth and consider
\begin{equation}
  \mathcal V_{\rm hard}(u)
  =\inf\{\mathcal A_u(\gamma):
  \gamma\in\Omega_{x_0},\ c_u(\gamma(T))\le0\}.
  \tag{HT1}
\end{equation}
The statement is local in the parameter and in one smooth active stratum;
the inequality itself remains hard.

\begin{theorem}[Bordered Jacobi response for an active hard target]
\label{thm:hard-target-jacobi}
Fix \(u_0\in\mathcal U\).  Suppose that \(\mathcal A_u\) and \(c_u\) are
\(C^{r+2}\), \(r\ge2\), in compatible local path and state charts.  Let
\(\gamma_0\) be a constrained local minimizer of \textup{(HT1)} with active
constraint, and suppose that there is a strictly positive multiplier
\(\lambda_0\) such that, for
\[
  \mathscr L(u,\gamma,\lambda)
  =\mathcal A_u(\gamma)+\lambda c_u(\gamma(T)),
\]
the KKT equations are
\begin{equation}
  D_\gamma\mathscr L(u_0,\gamma_0,\lambda_0)=0,
  \qquad c_{u_0}(\gamma_0(T))=0.
  \tag{HT2}
\end{equation}
On
\(\mathcal V_0=\{\xi\in H^1(\gamma_0^*T\mathcal N):\xi(0)=0\}\), set
\[
  \mathfrak J_0=D_{\gamma\gamma}^2\mathscr L,
  \qquad
  \mathfrak B_0\xi
  =\dd c_{u_0}(\gamma_0(T))[\xi(T)].
\]
Assume that \(\mathfrak J_0:\mathcal V_0\to\mathcal V_0^*\) is bounded,
self-adjoint, and coercive, and that \(\mathfrak B_0\ne0\).  Then:
\begin{enumerate}[leftmargin=*,itemsep=0.3em]
  \item there is a neighborhood \(U_0\) of \(u_0\) and a unique \(C^r\)
  branch \(u\mapsto(\gamma_u,\lambda_u)\) of active KKT points near
  \((\gamma_0,\lambda_0)\), with \(\lambda_u>0\); every \(\gamma_u\) is the
  unique constrained local minimizer in a common path neighborhood;

  \item on \(\mathcal Y_u=\mathcal V_u\times\R\), the bordered Jacobi--KKT
  map
  \begin{equation}
    \mathfrak K_u
    =
    \begin{pmatrix}
      \mathfrak J_u&\mathfrak B_u^*\\
      \mathfrak B_u&0
    \end{pmatrix}
    :\mathcal Y_u\longrightarrow\mathcal Y_u^*
    \tag{HT3}
  \end{equation}
  is a self-adjoint isomorphism, where
  \(\mathfrak J_u=D_{\gamma\gamma}^2\mathscr L\) and
  \(\mathfrak B_u\xi=\dd c_u(\gamma_u(T))[\xi(T)]\);

  \item if
  \[
    \mathfrak b_i^u
    =D_{(\gamma,\lambda)}\partial_{u_i}\mathscr L
      (u,\gamma_u,\lambda_u)\in\mathcal Y_u^*,
  \]
  then the path and multiplier response and the constrained value Hessian are
  \begin{equation}
    \boxed{
    \partial_{u_i}(\gamma_u,\lambda_u)
    =-\mathfrak K_u^{-1}\mathfrak b_i^u,}
    \tag{HT4}
  \end{equation}
  \begin{equation}
    \boxed{
    \partial_{u_i u_j}^2\mathcal V_{\rm hard}
    =\partial_{u_i u_j}^2\mathscr L
    -\left\langle
      \mathfrak b_i^u,\mathfrak K_u^{-1}\mathfrak b_j^u
    \right\rangle_{\mathcal Y_u^*,\mathcal Y_u}.}
    \tag{HT5}
  \end{equation}
  The direct derivative in \textup{(HT5)} holds \((\gamma,\lambda)\) fixed.
\end{enumerate}
If \(\mathcal A_u\) and \(c_u\) are affine in \(u\), the direct term in
\textup{(HT5)} vanishes.  If the same active branch and bordered
invertibility persist on a Boolean two-face, its exact conditional pair
effect is
\begin{equation}
  \boxed{
  \zeta_{\{i,j\}\mid C}
  =-\int_0^1\!\int_0^1
  \left\langle
  \mathfrak b_i^u,\mathfrak K_u^{-1}\mathfrak b_j^u
  \right\rangle\,\dd s\,\dd t,\quad
  u=\mathbf1_C+se_i+te_j.}
  \tag{HT6}
\end{equation}
Here every coordinate outside \(C\cup\{i,j\}\) is fixed at zero.
Unlike the unconstrained Gram formula, the bordered inverse is indefinite, so
hard-target interactions need not have one sign.
\end{theorem}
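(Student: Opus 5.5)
The plan is to apply the implicit function theorem to the KKT map on the Hilbert manifold of paths, working in the fixed compatible local chart around \(\gamma_0\) so that \(\mathcal V_u\) is identified with \(\mathcal V_0\) for \(u\) near \(u_0\). Define
\[
  \Psi(u,\gamma,\lambda)=\bigl(D_\gamma\mathscr L(u,\gamma,\lambda),\,c_u(\gamma(T))\bigr)\in\mathcal V_0^*\times\R .
\]
It is \(C^{r+1}\) because \(\mathscr L\) is \(C^{r+2}\), and \(\Psi(u_0,\gamma_0,\lambda_0)=0\) by (HT2). Its partial derivative in \((\gamma,\lambda)\) at the base point is exactly \(\mathfrak K_0\) in (HT3), since \(D_\lambda D_\gamma\mathscr L=\mathfrak B_0^*\) and \(D_\gamma c=\mathfrak B_0\).

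\emph{Bordered invertibility.} Self-adjointness of \(\mathfrak K_0\) is immediate from its block form. For the isomorphism, solve \(\mathfrak J\xi+\mathfrak B^*\mu=f\), \(\mathfrak B\xi=g\) by eliminating \(\xi=\mathfrak J^{-1}(f-\mathfrak B^*\mu)\). This leaves the scalar Schur complement \(s=\mathfrak B\mathfrak J^{-1}\mathfrak B^*\), which is strictly positive because \(\mathfrak J\) is coercive and \(\mathfrak B\ne0\). Hence \(\mu=s^{-1}(\mathfrak B\mathfrak J^{-1}f-g)\), and the inverse is bounded. Coercivity of \(\mathfrak J_u\), nonvanishing of \(\mathfrak B_u\), and therefore invertibility of \(\mathfrak K_u\), persist on a neighborhood by continuity of the \(C^r\) data. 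The implicit function theorem then gives the unique \(C^r\) branch \((\gamma_u,\lambda_u)\), and shrinking the neighborhood keeps \(\lambda_u>0\).

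\emph{Local minimality and active constraint.} This is the step I expect to be the main obstacle. Here \(\lambda_u>0\) implies the constraint stays active for nearby feasible paths. To identify \(\gamma_u\) as the unique constrained local minimizer in a common neighborhood, I will argue by second-order sufficiency. Coercivity of \(\mathfrak J_u\) on all of \(\mathcal V_u\), not merely on \(\ker\mathfrak B_u\), gives a uniform quadratic growth estimate
\[
  \mathscr L(u,\gamma,\lambda_u)\ge \mathscr L(u,\gamma_u,\lambda_u)+\tfrac{\mu}{4}\norm{\gamma-\gamma_u}^2
\]
on a ball whose radius is uniform in \(u\), using a Taylor bound on \(D^3\mathscr L\). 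For any feasible \(\gamma\) in that ball, \(\lambda_uc_u(\gamma(T))\le0\), so \(\mathcal A_u(\gamma)\ge\mathcal A_u(\gamma_u)+\tfrac{\mu}{4}\norm{\gamma-\gamma_u}^2\), with \(\mathcal A_u(\gamma_u)=\mathscr L(u,\gamma_u,\lambda_u)\) because the constraint is active. This yields strict local minimality and uniqueness. Making the ball uniform in \(u\) is where care is needed.

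\emph{Response formulas and the pair effect.} Differentiating \(\Psi(u,\gamma_u,\lambda_u)=0\) in \(u_i\) gives \(\mathfrak K_u\,\partial_{u_i}(\gamma_u,\lambda_u)+\mathfrak b_i^u=0\). This is (HT4), once we note that \(D_\lambda\partial_{u_i}\mathscr L=\partial_{u_i}c_u\) supplies the second component of \(\mathfrak b_i^u\). For the value, \(\mathcal V_{\rm hard}(u)=\mathscr L(u,\gamma_u,\lambda_u)\) locally. By stationarity in \((\gamma,\lambda)\), the envelope identity gives \(\partial_{u_i}\mathcal V_{\rm hard}=\partial_{u_i}\mathscr L(u,\gamma_u,\lambda_u)\). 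Differentiating again gives
\[
  \partial_{u_iu_j}^2\mathcal V_{\rm hard}=\partial^2_{u_iu_j}\mathscr L+\langle\mathfrak b_i^u,\partial_{u_j}(\gamma_u,\lambda_u)\rangle ,
\]
and substituting (HT4) yields (HT5). If \(\mathcal A_u\) and \(c_u\) are affine in \(u\), the direct term vanishes. The pair effect is then obtained as in (MJ9)–(MJ10): the inclusion–exclusion identity
\[
  \zeta_{\{i,j\}\mid C}=\int_{[0,1]^2}\partial^2_{u_iu_j}\mathcal V_{\rm hard}(\mathbf1_C+se_i+te_j)\,\dd s\,\dd t
\]
is valid on the face wherever the \(C^2\) branch persists, and it gives (HT6). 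Indefiniteness of \(\mathfrak K_u\) follows from the block structure. For example, on \((\xi,\mu)=(-\mathfrak J^{-1}\mathfrak B^*\mu,\mu)\) the quadratic form equals \(-\mu^2 s<0\), so \(\mathfrak K_u\) takes both signs and no sign can be asserted for (HT6).
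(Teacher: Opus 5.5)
Your proposal is correct and follows the paper's proof essentially step for step. Both arguments use the KKT map whose $(\gamma,\lambda)$-derivative is $\mathfrak K_0$, invertibility through the positive scalar Schur complement $\mathfrak B_0\mathfrak J_0^{-1}\mathfrak B_0^*$, the implicit-function branch, local minimality from full-space coercivity of $D^2_{\gamma\gamma}\mathscr L$ together with $\lambda_u c_u(\gamma(T))\le0$, and the envelope identity combined with (HT4) to obtain (HT5)--(HT6). (Your remark that $\lambda_u>0$ keeps the constraint active for all nearby feasible paths is not true as stated, but your quadratic-growth argument never uses it; your explicit negative direction for the bordered form is a small addition that the paper only asserts.)
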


The terminal contribution to \(\mathfrak J_u\) is
\(\lambda_u\operatorname{Hess}c_u\), while
\(\mathfrak B_u\) linearizes motion of the active boundary.  Thus
\textup{(HT3)} couples the interior Jacobi equation, endpoint transversality,
and active-target motion in one invertible system.

For fixed endpoints, the same theorem holds on Dirichlet fluctuations and the
terminal forces disappear.  For a general regular Lagrangian it remains valid
when existence, uniqueness, smoothness, and uniform coercivity of the complete
second variation are verified.  At active-set changes the correct object is a
normal-cone graphical derivative coupled to a Jacobi variational inequality;
a classical Hessian need not exist.

Accordingly, \cref{thm:global-mobius-jacobi} is the smooth mechanical
extension of the RDGC path framework, while
\cref{thm:hard-target-jacobi,cor:hard-condition-response} gives the classical
hard-target response on a fixed smooth active stratum.  Neither theorem
asserts classical differentiability through eigenvalue multiplicities or
active-index changes.

The theorem applies to a smooth integral action when its second-variation
bilinear form is continuous and coercive on the chosen Sobolev fluctuation
space.  The Riesz map identifies that bilinear form with \(\Jac\); in a
classical Euler--Lagrange problem this is the weak Jacobi operator with the
declared boundary conditions.

\begin{corollary}[Associative second-order path elimination]
\label{cor:path-schur-associativity}
If \(\eta=(\eta_1,\eta_2)\) and the complete vertical Hessian is coercive,
then eliminating \(\eta_2\) and subsequently \(\eta_1\) gives the same value
Hessian as eliminating \((\eta_1,\eta_2)\) in one step:
\begin{equation}
  \operatorname{Schur}_{(\eta_1,\eta_2)}\mathbb H
  =
  \operatorname{Schur}_{\eta_1}
  \bigl(\operatorname{Schur}_{\eta_2}\mathbb H\bigr).
  \tag{P7}
\end{equation}
\end{corollary}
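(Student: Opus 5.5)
We reduce (P7) to the quotient formula for Schur complements of a coercive block operator. Write the complete second-variation operator on \(\mathcal Z\times\Hilb_0\), with \(\Hilb_0=\Hilb_1\oplus\Hilb_2\), as
\[
  \mathbb H=
  \begin{pmatrix}
    A & B_1^* & B_2^*\\
    B_1 & C_{11} & C_{12}\\
    B_2 & C_{21} & C_{22}
  \end{pmatrix},
  \qquad
  C=\begin{pmatrix}C_{11}&C_{12}\\ C_{21}&C_{22}\end{pmatrix}=\Jac .
\]
Here \(\operatorname{Schur}_\eta\mathbb H=A-B^*C^{-1}B\) with \(B=(B_1,B_2)\). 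This is exactly the expression (P6) of \cref{thm:path-schur}, so both sides of (P7) are value Hessians.

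\textbf{Step 1: well-posedness of each elimination.} Coercivity \(\ip{h}{Ch}\ge\mu\norm h^2\), restricted to \(h=(0,h_2)\), gives \(C_{22}\succeq\mu\). Hence \(C_{22}^{-1}\) exists by Lax--Milgram, and \(\operatorname{Schur}_{\eta_2}\mathbb H\) is defined.

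The first elimination produces a block operator on \(\mathcal Z\times\Hilb_1\) whose vertical block is \(S_{11}=C_{11}-C_{12}C_{22}^{-1}C_{21}\). To see that \(S_{11}\) is coercive, fix \(h_1\) and set \(h_2=-C_{22}^{-1}C_{21}h_1\). Expanding gives
\[
\ip{h_1}{S_{11}h_1}=\ip{(h_1,h_2)}{C(h_1,h_2)}\ge\mu\norm{h_1}^2 .
\]
Therefore \(\operatorname{Schur}_{\eta_1}\) of the reduced operator is also defined.

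\textbf{Step 2: block inversion.} Use the block-LDU factorization
\[
C=\begin{pmatrix}I&C_{12}C_{22}^{-1}\\0&I\end{pmatrix}
\begin{pmatrix}S_{11}&0\\0&C_{22}\end{pmatrix}
\begin{pmatrix}I&0\\C_{22}^{-1}C_{21}&I\end{pmatrix}.
\]
Inverting it expresses \(C^{-1}\) through \(S_{11}^{-1}\) and \(C_{22}^{-1}\).

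Substituting into \(A-B^*C^{-1}B\) yields
\[
A-B_2^*C_{22}^{-1}B_2-\tilde B_1^*S_{11}^{-1}\tilde B_1,
\qquad
\tilde B_1=B_1-C_{12}C_{22}^{-1}B_2 .
\]
Meanwhile \(\operatorname{Schur}_{\eta_2}\mathbb H\) has visible block \(A-B_2^*C_{22}^{-1}B_2\), mixed block \(\tilde B_1\), and vertical block \(S_{11}\). Taking its \(\eta_1\)-Schur complement gives the same expression, which proves (P7).

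\textbf{Alternative route and main obstacle.} The same identity can be read variationally. For fixed \(z\), the quadratic \(q(z,h)=\ip{(z,h)}{\mathbb H(z,h)}\) has minimum \(\ip{z}{\operatorname{Schur}_\eta\mathbb H\,z}\) over \(h\). An iterated infimum, minimizing first over \(h_2\) and then over \(h_1\), equals the joint infimum. This is the second-order analogue of \cref{thm:path-dpp}, and coercivity ensures every infimum is attained.

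The main obstacle is only bookkeeping. We must verify coercivity of the intermediate \(S_{11}\), as in Step 1, and keep adjoints consistent under the Riesz identification of \(\Hilb_0^*\) with \(\Hilb_0\). No further analysis is needed.
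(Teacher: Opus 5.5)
Your proof is correct, but your main argument differs from the paper's. The paper's proof is essentially your ``alternative route''. It takes the quadratic form $q(\delta z,\delta\eta_1,\delta\eta_2)$ of the complete Hessian and notes that the joint infimum over $(\delta\eta_1,\delta\eta_2)$ equals the iterated infimum. It then identifies each quadratic elimination with the corresponding Schur complement and concludes from equality for every $\delta z$.

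Your primary route instead proves the Crabtree--Haynsworth quotient formula directly from the block-$LDU$ factorization of $C$. This produces the explicit intermediate data $A-B_2^*C_{22}^{-1}B_2$, $\tilde B_1$, and $S_{11}$.

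Your version makes explicit two points the paper leaves tacit:
\begin{enumerate}
\item You verify that the intermediate vertical block $S_{11}$ is coercive. Without this, the second infimum is not represented by a Schur complement at all. The paper compresses the point into ``coercivity gives unique quadratic minimizers.''
\item You obtain the operator identity directly. The variational route only gives equality of the two quadratic forms in $\delta z$, and must then use self-adjointness and polarization to reach (P7).
\end{enumerate}

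The paper's version is shorter, avoids all block bookkeeping, and presents (P7) as the second-order counterpart of the min-plus associativity \textup{(P1)}. The two routes rest on the same key fact. Your coercivity computation is precisely the variational identity $\langle h_1,S_{11}h_1\rangle=\min_{h_2}\langle (h_1,h_2),C(h_1,h_2)\rangle$, which is what makes the paper's iterated infimum representable at each stage.
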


\subsection{Interaction curvature generated by path relaxation}

Let \(u\in U\subset\R^m\) parameterize smooth mechanism intensities and
suppose the unreduced action is affine in \(u\):
\[
  \mathfrak A(\eta,u)
  =
  \mathfrak A_0(\eta)+\sum_{i=1}^m u_i\Delta_i(\eta).
\]
Define \(\mathcal G:\R^m\to\Hilb_0^*\) by
\[
  \mathcal G a=\sum_i a_iD_\eta\Delta_i.
\]

The finite-dimensional form of the following Schur identity is established
in the companion hidden-state reduction theory
\cite{li2026optimizationgeometrodynamics}.  We record its path-Hilbert form to
connect the abstract reduction formula to the Jacobi and Green operators
constructed above; the generic negative-Gram algebra is used as an established
ingredient.

\begin{corollary}[Path interaction curvature]
\label{cor:path-interaction}
Under \cref{thm:path-schur}, the local reduced value satisfies
\begin{equation}
  \boxed{
  D^2_{uu}\Val
  =
  -\mathcal G^*\Jac^{-1}\mathcal G
  \preceq0.}
  \tag{P8}
\end{equation}
In components,
\begin{equation}
  \partial^2_{u_i u_j}\Val
  =
  -\ip{D_\eta\Delta_i}
  {\Jac^{-1}D_\eta\Delta_j}.
  \tag{P9}
\end{equation}
If the same minimizing branch exists smoothly and the vertical second
variation remains uniformly coercive on an open set containing
\([0,1]^m\), then for two Boolean endpoint settings, with all other
intensities fixed at zero, the mixed finite difference is exactly
\begin{equation}
\begin{aligned}
  &\Val(e_i+e_j)-\Val(e_i)-\Val(e_j)+\Val(0)\\
  &\quad=
  -\int_0^1\!\int_0^1
  \ip{D_\eta\Delta_i}
  {\Jac^{-1}D_\eta\Delta_j}_{\eta_\star(se_i+te_j)}
  \,\dd s\,\dd t.
\end{aligned}
\tag{P10}
\end{equation}
Thus hidden path relaxation creates an observable interaction whenever the two
path perturbations fail to be orthogonal in the inverse-Jacobi metric.  A zero
integrated interaction can also arise through cancellation; only pointwise
zero curvature implies pointwise orthogonality.
\end{corollary}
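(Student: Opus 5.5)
We derive (P8)--(P10) by specializing \cref{thm:path-schur} to the affine action \(\mathfrak A(\eta,u)=\mathfrak A_0(\eta)+\sum_i u_i\Delta_i(\eta)\), with \(u\) playing the role of the visible parameter \(z\). Since the action is affine in \(u\), the direct visible block vanishes, \(\mathfrak A_{uu}=0\). The mixed block is \(\mathfrak A_{\eta u}a=\sum_i a_iD_\eta\Delta_i=\mathcal G a\). Under the Riesz identification its adjoint is \(\mathfrak A_{u\eta}=\mathcal G^*\).

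Substituting these blocks into (P6) gives
\[
  D^2_{uu}\Val=-\mathcal G^*\Jac^{-1}\mathcal G .
\]
Negative semidefiniteness follows from coercivity. Because \(\ip{h}{\Jac h}\ge\mu\norm{h}^2\), \(\Jac\) is a positive self-adjoint isomorphism, so \(\Jac^{-1}\) is also positive. Hence
\[
  a^\top D^2_{uu}\Val\,a=-\ip{\mathcal G a}{\Jac^{-1}\mathcal G a}\le0 .
\]
Reading off the \((i,j)\) entry gives (P9), with everything evaluated at \(\eta_\star(u)\). Here \(\Jac\) is \(\Jac(u)\), the Hessian taken along the branch. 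This is the pointwise form the theorem already provides at every \(u\) in the neighborhood.

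For (P10), restrict \(\Val\) to the two-face \(\psi(s,t)=\Val(se_i+te_j)\) and apply the fundamental theorem of calculus twice:
\[
  \psi(1,1)-\psi(1,0)-\psi(0,1)+\psi(0,0)=\int_0^1\!\int_0^1\partial_s\partial_t\psi\,\dd s\,\dd t .
\]
This requires \(\psi\in C^2\) on a neighborhood of \([0,1]^2\). That regularity is the main point needing care: \cref{thm:path-schur} is only local.

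I would cover the compact square by finitely many neighborhoods from \cref{thm:path-schur}, centered at points of the given smooth branch. On each such neighborhood, the hypothesis that the same branch exists smoothly identifies it with the local section \(\eta_\star\). Uniform coercivity on the open set guarantees the theorem applies at every center, with a uniform \(\mu\). The local \(C^2\) pieces therefore agree on overlaps, and \(\psi\) is \(C^2\) on the whole square. Its mixed partial equals \(\partial^2_{u_iu_j}\Val\) at \(se_i+te_j\), which (P9) identifies with the integrand.

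The closing remarks are immediate. The integrand vanishes identically exactly when \(D_\eta\Delta_i\) and \(D_\eta\Delta_j\) are orthogonal in the \(\Jac^{-1}\) inner product. The integral can nonetheless be zero through sign cancellation, since the off-diagonal inverse-Jacobi pairing has no fixed sign.
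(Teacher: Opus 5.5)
Your proposal is correct and follows the paper's proof. Affineness gives \(\mathfrak A_{uu}=0\) and \(\mathfrak A_{\eta u}=\mathcal G\), and the \(u\)-block of (P6) then yields (P8)--(P9), with the sign coming from positivity of \(\Jac^{-1}\); two applications of the fundamental theorem of calculus on the face give (P10). Your finite-cover argument for \(C^2\) regularity of \(\Val\) on the whole square adds care that the paper leaves implicit in its smooth-branch hypothesis, but the route is the same.
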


%% file: sections/05_diagonal_model.tex
\section{A Closed-Form Diagonal RDGC Model}
\label{sec:diagonal-model}

The general theory has a nontrivial realization in the determinant-one
two-dimensional diagonal family.  Write
\[
  D(u)=\diag(e^{u/2},e^{-u/2}),
\]
so the global scale has been quotiented out.  The induced affine-invariant
length is
\[
  \Len_{\diag}(u_{[0,T]})
  =
  \frac{1}{\sqrt2}\int_0^T|\dot u_t|\,\dd t.
\]
For \(H\in\SPD^2\), define
\[
  K_{\diag}^\ast(H)
  =\inf_{u\in\R}\kappa_{\rm gen}(H,D(u)).
\]

\begin{theorem}[Two-dimensional diagonal exact complexity]
\label{thm:2d-diag}
Let \(H\in\SPD^2\), \(K\ge1\), and
\[
  R_K=\frac{K+1}{\sqrt K}\sqrt{\det H}.
\]
If \(K<K_{\diag}^\ast(H)\), then
\[
  D_{K,\diag}^{2D}(u_0;H)=+\infty.
\]
If \(K\ge K_{\diag}^\ast(H)\), define
\[
  x_\pm(K)
  =
  \frac{R_K\pm\sqrt{R_K^2-4H_{11}H_{22}}}{2H_{22}},
  \qquad
  u_\pm(K)=2\log x_\pm(K).
\]
Then the feasible set is
\[
  \mathcal I_K(H)=[u_-(K),u_+(K)]
\]
and
\[
  D_{K,\diag}^{2D}(u_0;H)
  =
  \frac{1}{\sqrt2}\dist(u_0,\mathcal I_K(H)).
\]
\end{theorem}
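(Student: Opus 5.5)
The plan is to reduce the generalized condition number on the diagonal family to an explicit scalar inequality in \(u\), solve it as a quadratic in \(x=e^{u/2}\), and then read off the path value from one-dimensional geometry.

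\textbf{Step 1: the trace test.} For \(G=D(u)\), the matrix \(M_u=D(u)^{-1/2}HD(u)^{-1/2}\) has diagonal entries \(H_{11}e^{-u/2}\) and \(H_{22}e^{u/2}\), and its off-diagonal entry stays \(H_{12}\). Hence
\[
  \det M_u=\det H,
  \qquad
  \tr M_u=t(u):=H_{11}e^{-u/2}+H_{22}e^{u/2}.
\]
For a \(2\times2\) SPD matrix with eigenvalues \(\lambda_1\ge\lambda_2>0\) and \(\kappa=\lambda_1/\lambda_2\), one has
\[
  \frac{(\lambda_1+\lambda_2)^2}{\lambda_1\lambda_2}=\frac{(\kappa+1)^2}{\kappa}.
\]
Since \(\kappa\mapsto(\kappa+1)/\sqrt\kappa\) is strictly increasing on \([1,\infty)\), we get
\[
  \kappa_{\rm gen}(H,D(u))\le K
  \iff \frac{t(u)}{\sqrt{\det H}}\le\frac{K+1}{\sqrt K}
  \iff t(u)\le R_K .
\]

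\textbf{Step 2: the feasible interval.} Put \(x=e^{u/2}>0\). Multiplying \(t(u)\le R_K\) by \(x\) turns it into the quadratic inequality
\[
  H_{22}x^2-R_Kx+H_{11}\le0 .
\]
Its solution set is \([x_-(K),x_+(K)]\) exactly when the discriminant \(R_K^2-4H_{11}H_{22}\) is nonnegative, and it is empty otherwise. Both roots are positive, because their product is \(H_{11}/H_{22}>0\) and their sum is \(R_K/H_{22}>0\). Taking logarithms gives \(\mathcal I_K(H)=[u_-(K),u_+(K)]\).

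It remains to check that the threshold \(K=K^\ast_{\diag}(H)\) coincides with the discriminant condition.
\begin{itemize}
  \item By AM--GM, \(\min_u t(u)=2\sqrt{H_{11}H_{22}}\), and the minimum is attained.
  \item By the monotone equivalence of Step 1, the infimum \(K^\ast_{\diag}(H)\) is attained and is the unique \(K^\ast\ge1\) with \(R_{K^\ast}=2\sqrt{H_{11}H_{22}}\).
  \item Since \(R_K\) is increasing in \(K\), the set \(\mathcal I_K(H)\) is nonempty iff \(R_K\ge2\sqrt{H_{11}H_{22}}\), i.e.\ iff \(K\ge K^\ast_{\diag}(H)\).
\end{itemize}
When \(K<K^\ast_{\diag}(H)\) the target is empty, and \cref{def:restricted-complexity} gives the value \(+\infty\).

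\textbf{Step 3: length and the distance formula.} Since \(\log D(u)=\diag(u/2,-u/2)\), the affine-invariant speed is \(\fro{\diag(\dot u/2,-\dot u/2)}=|\dot u|/\sqrt2\). This justifies the stated length formula \(\Len_{\diag}\).

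For the lower bound, take any admissible path ending in \(\mathcal I_K(H)\). Then
\[
  \Len_{\diag}(u_{[0,T]})\ge\frac{1}{\sqrt2}\,|u_T-u_0|\ge\frac{1}{\sqrt2}\dist\bigl(u_0,\mathcal I_K(H)\bigr).
\]
For the upper bound, the straight constant-speed path from \(u_0\) to its nearest point in the closed interval attains this value. It is the zero path when \(u_0\in\mathcal I_K(H)\). Equivalently, \(\mathcal I_K(H)\) is the closed convex target on the flat totally geodesic line, so \cref{prop:rdgc-target-projection} applies directly.

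The main obstacle is the bookkeeping in Steps 1 and 2 rather than the path argument. One must make sure that \(\kappa_{\rm gen}\) is the generalized-eigenvalue ratio, so that the trace/determinant reduction is legitimate. One must also confirm that the convention \(x=e^{u/2}\) matches the stated roots \(x_\pm\). Finally, one must check that \(K^\ast_{\diag}\) is attained, so that the boundary case \(K=K^\ast_{\diag}(H)\) yields the degenerate interval \(u_-=u_+\) rather than an empty set.
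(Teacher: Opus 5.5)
Your proposal is correct and follows the paper's proof essentially step for step. Both arguments reduce \(\kappa_{\rm gen}(H,D(u))\le K\) to the trace test \(\tr A(u)\le R_K\) at fixed determinant \(\det A(u)=\det H\), rewrite it as the quadratic inequality \(H_{22}x^2-R_Kx+H_{11}\le0\) in \(x=e^{u/2}\), and read off the distance from the line element \(\dd s^2=\frac12\dd u^2\). Your identification of the threshold via \(\min_u t(u)=2\sqrt{H_{11}H_{22}}\) and the monotonicity of \(K\mapsto R_K\) makes explicit a step the paper only asserts, namely that the feasible set is empty exactly when \(K<K_{\diag}^\ast(H)\) and degenerates to a point at equality.
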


The interval \(\mathcal I_K(H)\) records the exact expressive range of the
remaining log-scale ratio.  The off-diagonal entry of (H) enters through
\(\det H\), so correlation shrinks the feasible interval and raises the
smallest reachable condition number.

Let
\[
  q=P_{\mathcal I_K(H)}(u_0)
\]
be the unique RDGC endpoint and fix \(T>0\).  For mechanism intensities
\(a=(a_1,a_2)\in\R^2\), consider paths \(v\in H^1([0,T])\) satisfying
\(v(0)=u_0\), \(v(T)=q\), with action
\begin{equation}
  \mathfrak A(v;a)
  =\frac14\int_0^T\dot v(t)^2\,\dd t
  -a_1\int_0^T v(t)\,\dd t
  -a_2\int_0^T\frac{t}{T}v(t)\,\dd t.
  \tag{D1}
\end{equation}

\begin{theorem}[Exact forced diagonal RDGC response]
\label{thm:diag-forced-response}
Assume \(K\ge K_{\diag}^\ast(H)\).  The action in \textup{(D1)} has the
unique minimizer
\[
  v_a(t)
  =u_0+\frac{q-u_0}{T}t+a_1w_1(t)+a_2w_2(t),
\]
where
\[
  w_1(t)=t(T-t),
  \qquad
  w_2(t)=\frac{Tt}{3}-\frac{t^3}{3T}.
\]
At \(a=0\), this is the constant-speed RDGC path and
\[
  \mathfrak A(v_0;0)
  =\frac{D_{K,\diag}^{2D}(u_0;H)^2}{2T}.
\]
For \(\Val(a)=\min_v\mathfrak A(v;a)\), the path response is
\(\partial_{a_i}v_a=w_i\), and
\begin{equation}
  \boxed{
  D_a^2\Val
  =-T^3
  \begin{pmatrix}
    1/6 & 1/12\\
    1/12 & 2/45
  \end{pmatrix}\prec0.}
  \tag{D2}
\end{equation}
The positive matrix inside the minus sign has determinant \(1/2160\).
\end{theorem}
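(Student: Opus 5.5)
The action (D1) is a strictly convex quadratic functional on the affine space \(\{v\in H^1([0,T]):v(0)=u_0,\ v(T)=q\}\), and it is linear in \(a\). The plan is therefore to solve the Euler--Lagrange equation explicitly, verify the ansatz, and read off the value Hessian either directly or from \cref{cor:path-interaction}. Write \(v=\ell+h\), where \(\ell(t)=u_0+(q-u_0)t/T\) and \(h\in H_0^1\). The kinetic term is \(\tfrac14\int\dot h^2\) plus a cross term \(\tfrac12\int\dot\ell\dot h\), and this cross term vanishes because \(\dot\ell\) is constant and \(h(0)=h(T)=0\). The quadratic part is coercive on \(H_0^1\) by Poincar\'e, so a unique minimizer exists. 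It is characterized by the weak Euler--Lagrange equation \(-\tfrac12\ddot v=a_1+a_2t/T\) with the stated Dirichlet data.

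Next I would check the ansatz directly. We have \(\ddot w_1=-2\) and \(\ddot w_2=-2t/T\), and both \(w_1\) and \(w_2\) vanish at \(0\) and at \(T\) (for \(w_2\), note \(T^2/3-T^2/3=0\)). Hence \(-\tfrac12\ddot v_a=a_1+a_2t/T\) holds, \(v_a\) is the minimizer, and \(\partial_{a_i}v_a=w_i\). At \(a=0\) the minimizer is the linear path, with action \((q-u_0)^2/(4T)\). By \cref{thm:2d-diag} we have \(D=\tfrac1{\sqrt2}|u_0-q|\), so \(D^2/(2T)=(q-u_0)^2/(4T)\), which agrees. This uses \(K\ge K^\ast_{\diag}\), so that \(\mathcal I_K\) is nonempty and \(q\) is defined.

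For the Hessian, the action is affine in \(a\) with forces \(\Delta_1(v)=-\int v\) and \(\Delta_2(v)=-\int (t/T)v\). The Jacobi operator is \(-\tfrac12\,\mathrm d^2/\mathrm dt^2\) with Dirichlet conditions. Then (P9), or simply the envelope identity \(\partial_{a_i}\Val=\Delta_i(v_a)\) differentiated once more, gives \(\partial^2_{a_ia_j}\Val=\partial_{a_j}\Delta_i(v_a)=\Delta_i(w_j)\). So the entries are \(-\int_0^T w_1\), \(-\int_0^T (t/T)w_1\), and \(-\int_0^T (t/T)w_2\); the symmetric entry \(-\int_0^T w_2\) serves as a check. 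The needed integrals are:
\[
\int_0^T t(T-t)\,\dd t=T^3/6,\qquad \int_0^T \tfrac tT\,t(T-t)\,\dd t=T^3/12,
\]
\[
\int_0^T w_2\,\dd t=T^3/6-T^3/12=T^3/12,\qquad \int_0^T \tfrac tT\,w_2\,\dd t=T^3/9-T^3/15=2T^3/45.
\]
The determinant is \(\tfrac{1}{6}\cdot\tfrac{2}{45}-\tfrac1{144}=\tfrac1{135}-\tfrac1{144}=\tfrac{9}{19440}=\tfrac1{2160}>0\). Together with the positive diagonal, this shows the bracketed matrix is positive definite, so \(D_a^2\Val\prec0\).

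The main obstacle is bookkeeping rather than conceptual. The \(\tfrac14\) normalization, which gives the Jacobi operator \(-\tfrac12\partial_t^2\), has to be tracked consistently so that the Green functions come out as exactly \(w_i\) and not \(2w_i\). The sign convention of the forces must also be kept straight. I would cross-check the result against (P9) as \(-\langle D\Delta_i,\Jac^{-1}D\Delta_j\rangle\), using the fact that \(\Jac^{-1}D\Delta_j=-w_j\).
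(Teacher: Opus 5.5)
Your proposal is correct and follows essentially the same route as the paper: split off the affine path, use coercivity of \(\tfrac12\int_0^T\dot h^2\,\dd t\) on \(H_0^1\), check that \(w_i\) solve \(-\tfrac12 w_i''=f_i\) (with \(f_1=1\), \(f_2=t/T\)) under Dirichlet conditions, and read off the entries \(-\int_0^T f_i w_j\,\dd t\) from the envelope identity. Your integrals, the value identity \(\mathfrak A(v_0;0)=(q-u_0)^2/(4T)\), and the determinant \(1/2160\) all agree with the paper's computation.
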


The preceding examples use the unrestricted path class inside a geodesically
convex diagonal family.  A coordinate-update protocol gives a simple case in
which the admissible path geometry cannot be replaced by the ambient AIRM
projection in \cref{prop:rdgc-target-projection}.  Define
\[
  D_3(x)=\diag(e^{x_1},e^{x_2},e^{-x_1-x_2}),
  \qquad x\in\R^2,
\]
and call a path sequential if at most one of \(\dot x_1,\dot x_2\) is nonzero
almost everywhere.  This models a diagonal preconditioner that rescales one
independent log-coordinate at a time.

\begin{proposition}[Exact coordinate-sequential RDGC]
\label{prop:sequential-diagonal-rdgc}
For \(H=I\), put \(k=\log K\) and
\[
  \mathcal H_k
  =\left\{y\in\R^2:
  \max(y_1,y_2,-y_1-y_2)-
  \min(y_1,y_2,-y_1-y_2)\le k\right\}.
\]
The sequential path distance and RDGC are
\begin{equation}
  d_{\rm seq}(x,y)=\sqrt2\norm{x-y}_1,
  \qquad
  D_{K,{\rm seq}}^{3D}(x_0;I)
  =\sqrt2\min_{y\in\mathcal H_k}\norm{x_0-y}_1.
  \tag{D7}
\end{equation}
In particular, if \(x_0=(a,a)\) with \(a>k/3\), the unique symmetric target
point is \(q=(k/3,k/3)\), and
\begin{equation}
\begin{aligned}
  D_{K,{\rm seq}}^{3D}(x_0;I)
  &=2\sqrt2\left(a-\frac{k}{3}\right),\\
  d_{\AI}(D_3(x_0),D_3(q))
  &=\sqrt6\left(a-\frac{k}{3}\right).
\end{aligned}
\tag{D8}
\end{equation}
Thus the sequential update protocol has a strictly larger exact RDGC, by the
factor \(2/\sqrt3\), than the ambient AIRM projection benchmark.
\end{proposition}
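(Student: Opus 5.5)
The plan is to compute the Riemannian data of the sequential path class directly and then reduce the RDGC to a finite-dimensional \(\ell_1\) projection onto the convex hexagon \(\mathcal H_k\).

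\textbf{Step 1: the metric.} For \(D_3(x)=\diag(e^{x_1},e^{x_2},e^{-x_1-x_2})\) the AIRM speed is
\[
\fro{D_3^{-1/2}\dot D_3D_3^{-1/2}}
=\bigl(\dot x_1^2+\dot x_2^2+(\dot x_1+\dot x_2)^2\bigr)^{1/2}.
\]
On a sequential path only one coordinate moves at a time, so the speed is \(\sqrt2|\dot x_i|\). The length of any sequential path is therefore \(\sqrt2\int(|\dot x_1|+|\dot x_2|)\,\dd t\), which is at least \(\sqrt2\norm{x-y}_1\). This lower bound is attained by the two-leg path that moves \(x_1\) monotonically and then \(x_2\). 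That gives \(d_{\rm seq}\) in (D7).

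\textbf{Step 2: the target and the RDGC formula.} With \(H=I\), the relative state is \(D_3(x)\) itself. Its condition number is \(\le K\) exactly when the log-eigenvalue spread \(\max-\min\) of \((x_1,x_2,-x_1-x_2)\) is \(\le k\), that is, when \(x\in\mathcal H_k\). Since \(d_{\rm seq}\) is a genuine path metric, Definition~\ref{def:restricted-complexity} gives the infimum over endpoints \(y\in\mathcal H_k\) of \(d_{\rm seq}(x_0,y)\). This yields the second identity in (D7). The minimum is attained because \(\mathcal H_k\) is closed (and convex, being an intersection of half-planes) and the distance function is coercive.

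\textbf{Step 3: the symmetric case \(x_0=(a,a)\) with \(a>k/3\).} This is the main computational step.

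For the lower bound, I will use a single linear functional. The constraint \(y_1-(-y_1-y_2)\le k\) and its symmetric counterpart sum to \(3(y_1+y_2)\le 2k\), so \(y_1+y_2\le 2k/3\) on \(\mathcal H_k\). Hence
\[
\norm{x_0-y}_1\ge 2a-(y_1+y_2)\ge 2(a-k/3).
\]
For the upper bound, \(q=(k/3,k/3)\) lies in \(\mathcal H_k\): its log-spectrum is \((k/3,k/3,-2k/3)\), whose spread is exactly \(k\). The bound is attained there.

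The symmetric target point is unique. A symmetric \(y=(b,b)\) has spread \(3|b|\le k\), and it achieves \(\ell_1\) distance \(2(a-b)\), which is minimized only at \(b=k/3\). (I will note that non-symmetric \(\ell_1\) minimizers may exist along the edge \(y_1+y_2=2k/3\), which is why the statement speaks only of the unique symmetric point.) This gives \(D^{3D}_{K,\rm seq}=2\sqrt2(a-k/3)\).

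\textbf{Step 4: the comparison.} With \(\delta=a-k/3\), the AIRM distance is the Frobenius norm of the log-difference \((\delta,\delta,-2\delta)\), namely \(\sqrt6\,\delta\). The ratio of the two distances is \(2\sqrt2/\sqrt6=2/\sqrt3>1\).

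I expect the only delicate point to be Step 3's lower bound. In particular, one must choose the right supporting functional \(y_1+y_2\) rather than attempt a general \(\ell_1\) projection onto the hexagon.
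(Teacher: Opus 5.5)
Your Steps 1 and 2, and the lower bound and attainment in Step 3, follow the paper's proof essentially verbatim. The line element $\dd s^2=\dd x_1^2+\dd x_2^2+(\dd x_1+\dd x_2)^2$ collapses to $\sqrt2|\dd x_i|$ on sequential segments, and the target is the hexagon $\mathcal H_k$. Summing $2y_1+y_2\le k$ and $y_1+2y_2\le k$ gives the supporting bound $y_1+y_2\le 2k/3$, which the paper also uses. These parts are correct.

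What is missing is the step behind the proposition's last sentence. You compute $d_{\AI}(D_3(x_0),D_3(q))=\sqrt6\,\delta$ and take the ratio. You never show that $q$ is the ambient AIRM projection of $x_0$ onto the target, i.e.\ that $\sqrt6\,\delta$ equals the benchmark $d_{\AI}(D_3(x_0),D_3(\mathcal H_k))$ of \cref{prop:rdgc-target-projection}. As written you only know the benchmark is at most $\sqrt6\,\delta$. That gives a ratio of at least $2/\sqrt3$, not the exact factor claimed.

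The paper's proof opens with exactly this step, via symmetry and convexity. In the $x$-chart the AIRM metric is the constant form $v_1^2+v_2^2+(v_1+v_2)^2$, so the projection onto the closed convex hexagon is unique. Since $x_0$, the metric, and $\mathcal H_k$ are invariant under $y_1\leftrightarrow y_2$, the projection lies on the diagonal. There feasibility reads $3|s|\le k$, and the distance $\sqrt6\,|a-s|$ is minimized at $s=k/3$. Two other ways to close it:
\begin{itemize}
\item With $M=\bigl(\begin{smallmatrix}2&1\\1&2\end{smallmatrix}\bigr)$ the Gram matrix of the metric, $M(x_0-q)=\delta\bigl[(2,1)^\top+(1,2)^\top\bigr]$ lies in the normal cone at the vertex $q$.
\item \cref{thm:full-spd-benchmark} also returns $\sqrt6\,\delta$.
\end{itemize}
This identification is what ``the unique symmetric target point'' refers to in the statement.

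Separately, your parenthetical about non-symmetric $\ell_1$ minimizers ``along the edge $y_1+y_2=2k/3$'' is false. That line meets $\mathcal H_k$ only at the vertex $q$, because $y_1+y_2=2k/3$ forces both $2y_1+y_2\le k$ and $y_1+2y_2\le k$ to hold with equality. Your own equality analysis in Step 3 therefore already shows that $q$ is the unique $\ell_1$ minimizer, not merely the unique symmetric one.
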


The fixed-endpoint model verifies the unconstrained Green response directly
and shows that two time profiles acquire a nonzero interaction solely through
relaxation of a structured metric path.  The next model keeps the terminal
condition hard and moves its active spectral boundary.

Let \(b\in\R^m\), put \(\rho(u)=\rho_0+b^\top u\), and define the
determinant-one Hessian family
\[
  H_u=\diag(e^{\rho(u)/2},e^{-\rho(u)/2}).
\]

\begin{theorem}[Exact moving hard-target diagonal response]
\label{thm:diag-moving-hard-target}
Let \(K>1\), and suppose an open set \(\mathcal U_0\supset[0,1]^m\)
satisfies
\begin{equation}
  v_0>\rho(u)+\log K,
  \qquad u\in\mathcal U_0.
  \tag{D3}
\end{equation}
For the metric path \(D(v(t))\), impose the hard terminal condition
\[
  \kappa_{\rm gen}(H_u,D(v(T)))\le K
\]
and minimize the kinetic action \(\frac14\int_0^T\dot v^2\dd t\).  Then the
active target endpoint, path, multiplier, and value are
\begin{equation}
\begin{aligned}
  q_u&=\rho(u)+\log K,\\
  v_u(t)&=v_0+\frac{q_u-v_0}{T}t,\\
  \lambda_u&=\frac{v_0-q_u}{2T}>0,\\
  \mathcal V_{\rm hard}(u)&=\frac{(v_0-q_u)^2}{4T}.
\end{aligned}
\tag{D4}
\end{equation}
Consequently,
\begin{equation}
  \partial_{u_i}v_u(t)=b_i\frac{t}{T},
  \qquad
  \partial_{u_i}\lambda_u=-\frac{b_i}{2T},
  \qquad
  \boxed{D_{uu}^2\mathcal V_{\rm hard}=\frac{bb^\top}{2T}.}
  \tag{D5}
\end{equation}
Every Boolean conditional pair effect on the cube is therefore
\begin{equation}
  \boxed{\zeta_{\{i,j\}\mid C}^{\rm hard}=\frac{b_i b_j}{2T}.}
  \tag{D6}
\end{equation}
Thus equally oriented target shifts have positive hard interaction and
oppositely oriented shifts have negative hard interaction.  The whole cube
remains on one simple active spectral stratum by \textup{(D3)}, so
\textup{(D4)}--\textup{(D6)} instantiate \textup{(HT4)}--\textup{(HT6)}.
\end{theorem}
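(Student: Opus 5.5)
The plan is to reduce everything to the one-dimensional spectral computation behind \cref{thm:2d-diag}, and then read off (D4)--(D6) by direct differentiation. Afterwards I would check consistency with the bordered system (HT3)--(HT6).

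\emph{Target interval.} For diagonal \(H_u\) and \(D(v)\), the matrix \(D(v)^{-1/2}H_uD(v)^{-1/2}=\diag(e^{(\rho(u)-v)/2},e^{-(\rho(u)-v)/2})\) has eigenvalues whose ratio is \(e^{|\rho(u)-v|}\). Hence \(\kappa_{\rm gen}(H_u,D(v))\le K\) exactly when \(v\in[\rho(u)-\log K,\rho(u)+\log K]\), a closed interval in the flat coordinate \(v\). By (D3), \(v_0\) lies strictly to the right of this interval for every \(u\in\mathcal U_0\). The nearest feasible point is therefore \(q_u=\rho(u)+\log K\).

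\emph{Path and value.} The action \(\frac14\int_0^T\dot v^2\,\dd t\) is strictly convex and the terminal set is convex. The problem is therefore convex with a unique global minimizer. For that minimizer I would invoke \cref{thm:length-energy-gauge}, or equivalently \cref{prop:rdgc-target-projection} in this flat one-dimensional model. The minimizer is the constant-speed segment from \(v_0\) to \(q_u\), and its value is \((v_0-q_u)^2/(4T)\). The normalization matches \(\frac12\int(\dot v^2/2)\) with length element \(|\dot v|/\sqrt2\).

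\emph{Multiplier.} On the active side \(v>\rho\), the constraint function is locally \(c_u(v)=v-\rho(u)-\log K\), which is smooth and affine in both \(v\) and \(u\). Stationarity of \(\frac14\int_0^T\dot v^2\,\dd t+\lambda c_u(v(T))\) gives \(\ddot v=0\) together with the natural boundary condition \(\tfrac12\dot v(T)+\lambda=0\). This yields \(\lambda_u=(v_0-q_u)/(2T)\), which is positive by (D3).

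\emph{Derivatives and interactions.} Since \(q_u\) is affine in \(u\) with \(\partial_{u_i}q_u=b_i\), I get \(\partial_{u_i}v_u(t)=b_i t/T\) and \(\partial_{u_i}\lambda_u=-b_i/(2T)\). Because \(\mathcal V_{\rm hard}\) is a quadratic polynomial in \(u\), its Hessian is the constant \(bb^\top/(2T)\). For (D6), \(\mathcal V_{\rm hard}\) is exactly quadratic on every face \(u=\mathbf1_C+se_i+te_j\), so the mixed second difference equals the double integral of the constant \(\partial^2_{u_iu_j}\mathcal V_{\rm hard}=b_ib_j/(2T)\). This is just (MJ9)/(HT6) with a constant integrand; it needs only \(\mathcal U_0\supset[0,1]^m\), which is given.

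To show that this instantiates \cref{thm:hard-target-jacobi}, I would check its hypotheses along the whole cube. The extreme eigenvalues are simple because \(\rho-v_u(T)=-\log K\neq0\). The constraint differential satisfies \(\mathfrak B_u\xi=\xi(T)\neq0\). The multiplier is positive. The operator \(\mathfrak J_u\), namely \(\frac12\int\dot\xi\,\dot\eta\) on \(\{\xi(0)=0\}\) since \(\operatorname{Hess}c_u=0\), is coercive. As an optional consistency check, one can solve the bordered system (HT3) with right-hand side \(\mathfrak b_i=(0,-b_i)\) and recover the same response and (HT5).

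The main obstacle, though mild, is making sure the active branch never leaves the smooth stratum: the relevant constraint must remain \(v-\rho\le\log K\) rather than \(\rho-v\le\log K\). That is exactly what (D3) guarantees uniformly on \(\mathcal U_0\). It also makes all face integrals legitimate, with no eigenvalue collision or active-index switch.
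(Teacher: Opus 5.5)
Your proposal is correct and follows essentially the same route as the paper. Like the paper, you compute \(\log\kappa_{\rm gen}(H_u,D(v))=|v-\rho(u)|\), use \textup{(D3)} to project onto the upper endpoint \(q_u\), and obtain \(\lambda_u\) from the transversality condition \(\tfrac12\dot v_u(T)+\lambda_u=0\); you then differentiate the closed forms, integrate the constant mixed derivative \(b_ib_j/(2T)\) over each face, and check the hypotheses of \cref{thm:hard-target-jacobi} (simple eigenvalues, \(\mathfrak B_u\xi=\xi(T)\neq0\), positive multiplier, coercive kinetic form). Your explicit solve of \textup{(HT3)} with \(\mathfrak b_i^u=(0,-b_i)\), which recovers \textup{(D5)} and \textup{(HT5)}, is a useful extra consistency check; the paper only asserts that the hypotheses are met.
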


\FloatBarrier

%% file: sections/06_verification.tex
\section{Deterministic Verification}
\label{sec:verification}

The source tree contains a finite-dimensional verification of identities that
are algebraically independent of the proofs.  A nodal discretization of a
strictly convex quartic path action is solved under two affine bulk/terminal
interventions.  The exact tridiagonal Hessian supplies every Jacobi solve, and
tensor Gauss--Legendre quadrature integrates the curvature over the
intervention square.

At \(N=40\) nodes and quadrature order \(q=10\), the four-vertex effect is
\[
  \zeta_{\{1,2\}}=-0.15744124517954516,
  \qquad
  I_{\rm Jac}=-0.15744124517876962.
\]
Their absolute discrepancy is
\(7.755\times10^{-13}\), and the maximum Green-reciprocity error is
\(8.327\times10^{-17}\).  The minimum sampled coordinate-Hessian eigenvalue
is \(0.1041677\), which certifies positive definiteness of this discrete
problem only; its value is basis- and mesh-scaled.

\begin{table}[t]
\caption{Deterministic mesh and quadrature sensitivity.  The effect changes
with the mesh, while the two discrete evaluations agree to approximately
machine precision.  The coordinate eigenvalue is not a discretization of a
mesh-independent coercivity constant.}
\label{tab:verification-sensitivity}
\centering
\begin{tabular}{rrrr}
\toprule
\(N\) & four-vertex effect & identity error & coordinate \(\lambda_{\min}\)\\
\midrule
10 & \(-1.7647773\times10^{-1}\) & \(9.398\times10^{-13}\) & \(3.963001\times10^{-1}\)\\
20 & \(-1.6369020\times10^{-1}\) & \(1.123\times10^{-12}\) & \(2.049008\times10^{-1}\)\\
40 & \(-1.5744125\times10^{-1}\) & \(7.755\times10^{-13}\) & \(1.041677\times10^{-1}\)\\
80 & \(-1.5435403\times10^{-1}\) & \(4.931\times10^{-13}\) & \(5.251650\times10^{-2}\)\\
\midrule
\(q\) at \(N=40\) & 4 & 6 & 10 / 16\\
identity error & \(1.008\times10^{-10}\) & \(1.267\times10^{-12}\) &
\(7.755/7.507\times10^{-13}\)\\
\bottomrule
\end{tabular}
\end{table}

The same program verifies the diagonal interaction matrix and determinant in
\textup{(D2)}, both signs of the moving hard-target interaction in
\textup{(D6)}, and the sequential-to-ambient ratio in \textup{(D8)}.  A
separate symbolic check verifies the scalar
implicit-differentiation specialization of the third-order formula; it is not
a numerical validation of the full path-space recursion in
\cref{cor:all-order-mobius-jacobi}.

The deterministic entry point is
\nolinkurl{experiments/toy/verify_mobius_jacobi.py}; passing
\texttt{--sensitivity} produces \cref{tab:verification-sensitivity}.  No
random seed is used.  The reported run used Python 3.11.4, NumPy 1.26.4,
SciPy 1.10.1, and SymPy 1.11.1.  These checks compare two evaluations of the
same discretized identities.  They establish discrete algebraic consistency,
with no continuous-discretization error or optimizer-performance claim.

%% file: sections/07_discussion.tex
\section{Discussion}
\label{sec:discussion}

The results separate four logical layers of restricted metric optimization.
Path concatenation gives the exact min-plus semigroup.  Tonelli assumptions
give minimizers.  A compact-control representation supplies the conditional
Hamilton--Jacobi layer.  Smooth nondegeneracy gives the Jacobi inverse and its
Schur-complement response.  In the Hadamard mechanical model, nonpositive
curvature and convex potentials derive that nondegeneracy globally on an
entire neighborhood of the intervention cube.

The fixed-horizon energy gauge is essential.  It removes the
reparameterization zero mode of length while preserving the RDGC minimizer and
fixes the action value at \(D_{K,\F}^2/(2T)\).  The resulting Green operator
handles bulk and terminal Robin forcing in one system, generates the value
Hessian with exact spectral and rank information, and recursively determines
every prescribed finite interaction order under corresponding regularity.
When the condition target itself moves, the
bordered Jacobi--KKT operator adds the active boundary and its multiplier to
the same response calculation.  This supplies a classical hard-RDGC response
on each regular active spectral stratum.

The strongest statements are finite-dimensional and deterministic.  A
nonlinear training problem requires a separate realization theorem connecting
optimizer state, damping, bias correction, and interpolation to an admissible
metric path.  Eigenvalue collisions, nonsmooth active-set changes, rank
changes, multiple minimizers, and loss of Jacobi coercivity require stratified
or generalized response theory.  These boundaries delimit the classical
derivatives proved here.

In summary, RDGC is a genuine path-space value: its global elimination law is
Bellman composition, its smooth local response is a Jacobi Schur complement,
and its finite intervention effects are exact integrals of a Green or bordered
Green kernel.  The closed-form diagonal model demonstrates the unconstrained
layers within a restricted preconditioning geometry.  Its moving-Hessian
specialization additionally gives the hard spectral endpoint, multiplier, and
both possible pair-interaction signs in closed form.  The coordinate-sequential
three-dimensional example shows separately that an update protocol can make
RDGC strictly exceed the ambient static projection benchmark.

%% file: appendix/a_path_proofs.tex
\section{Proofs for Path-Space Reduction}
\label{app:path-proofs}

\begin{proof}[Proof of \cref{thm:path-dpp}]
Fix an admissible path \(\gamma\in\Path_{s,t}(x,z)\) and put
\(y=\gamma(r)\).  Restriction and action additivity give
\[
  \mathcal A_{s,t}(\gamma)
  \ge
  c_{s,r}(x,y)+c_{r,t}(y,z).
\]
Taking the infimum over \(\gamma\) proves that the left side of (P1) is at
least the right side.

If the right side is \(+\infty\), the preceding inequality already forces the
left side to be \(+\infty\).  Otherwise, for every \(\varepsilon>0\), choose
\(y\) whose two finite transition costs have sum within \(\varepsilon\) of
the right side, and choose segments \(\gamma_1,\gamma_2\) whose actions are
within \(\varepsilon\) of those costs.  Their concatenation is admissible and
has action at most
\[
  \inf_{w}\{c_{s,r}(x,w)+c_{r,t}(w,z)\}+3\varepsilon.
\]
Let \(\varepsilon\downarrow0\).  The interval lower bounds in
\cref{ass:path-composition} exclude \(-\infty\) transition costs, so every
sum used above is defined.
\end{proof}

\begin{proof}[Proof of \cref{thm:global-mobius-jacobi}]
We divide the argument into the global variational, differential, and Boolean
layers.

\paragraph{Path norm and action regularity.}
For \(\xi\in\mathcal V_u\), parallel transport to one tangent space and the
fundamental theorem of calculus give
\begin{equation}
  \norm{\xi(T)}^2
  \le T\int_0^T\norm{D_t\xi}^2\dd t,
  \qquad
  \int_0^T\norm{\xi(t)}^2\dd t
  \le\frac{T^2}{2}\int_0^T\norm{D_t\xi}^2\dd t.
  \tag{A.P1}
\end{equation}
Hence the derivative norm is equivalent to the usual \(H^1\) norm on the
fixed-initial fluctuation space.  The hypotheses of
\cref{lem:path-action-smoothness} hold with \(k=r+2\); consequently
\((u,\gamma)\mapsto\mathcal A_u(\gamma)\) is a \(C^{r+1}\) functional in
every exponential path chart used below.

\paragraph{Coercivity and existence without a potential lower bound.}
Geodesic convexity gives the global support inequalities
\[
  W_u(t,x)
  \ge W_u(t,x_0)
  +\ip{\operatorname{grad}W_u(t,x_0)}{\log_{x_0}x},
\]
\[
  \Phi_u(x)
  \ge \Phi_u(x_0)
  +\ip{\operatorname{grad}\Phi_u(x_0)}{\log_{x_0}x}.
\]
The uniform base-point bounds imply that both right sides are bounded below
by \(-C(1+d(x_0,x))\), uniformly in \((t,u)\).  Put
\(E(\gamma)=\int_0^T\norm{\dot\gamma}^2\dd t\).  Since
\[
  d(x_0,\gamma(t))\le\sqrt t\,E(\gamma)^{1/2},
\]
there are constants \(C_1,C_2\), independent of \(u\in\mathcal U_0\), such
that
\begin{equation}
  \mathcal A_u(\gamma)
  \ge\frac12E(\gamma)-C_1E(\gamma)^{1/2}-C_2.
  \tag{A.P2}
\end{equation}
Thus minimizing sequences have uniformly bounded \(H^1\) energy even when a
potential tends linearly to \(-\infty\).

The common initial point and the bound
\[
  d(\gamma(s),\gamma(t))
  \le |t-s|^{1/2}E(\gamma)^{1/2}
\]
confine a minimizing sequence to a common bounded ball and make it
equicontinuous.  Finite-dimensional Hadamard manifolds are proper, so
Arzela--Ascoli gives a uniformly convergent subsequence.  In a finite family
of path charts its velocities converge weakly in \(L^2\).  Kinetic energy is
weakly lower semicontinuous, while uniform path convergence and local uniform
continuity of the potentials give convergence of the bulk and terminal
potential terms.  The limit therefore attains the infimum.

\paragraph{Strict path convexity and uniqueness.}
For two admissible paths \(\gamma^0,\gamma^1\), let
\(\Gamma(s,t)\) be their pointwise geodesic homotopy and write
\(H=\partial_s\Gamma\), \(V=\partial_t\Gamma\).  The common initial point
gives \(H(s,0)=0\).  The second variation along the homotopy is
\[
\begin{aligned}
  \frac{\dd^2}{\dd s^2}\mathcal A_u(\Gamma_s)
  ={}&\int_0^T
  \left[\norm{D_tH}^2
  -\ip{R(H,V)V}{H}
  +\operatorname{Hess}W_u[H,H]\right]\dd t\\
  &+\operatorname{Hess}\Phi_u[H(T),H(T)].
\end{aligned}
\]
Every term after \(\norm{D_tH}^2\) is nonnegative by nonpositive curvature
and (MJ2).  If the lower bound vanishes, then \(D_tH=0\); together with
\(H(s,0)=0\), this implies \(H\equiv0\).  Hence the action is strictly convex
along every nontrivial path homotopy.  The minimizer is unique.

\paragraph{Euler--Lagrange equation and index form.}
For \(\xi(0)=0\), the first variation at the minimizer is
\[
\begin{aligned}
  D\mathcal A_u(\gamma_u)[\xi]
  ={}&\int_0^T
  \ip{-D_t\dot\gamma_u+\operatorname{grad}W_u}{\xi}\dd t\\
  &+\ip{\dot\gamma_u(T)+\operatorname{grad}\Phi_u}{\xi(T)}.
\end{aligned}
\]
Compactly supported tests and then arbitrary terminal values give (MJ3) in
weak form.  In a coordinate chart the weak equation first gives
\(\dot\gamma_u\in W^{1,1}\), hence \(\gamma_u\in C^1\); continuity in
\(t\) of the force and smoothness of the connection then give
\(\gamma_u\in C^2\) and (MJ3) pointwise.  Differentiating once more gives
(MJ4).  Nonpositive
curvature and (MJ2) give (MJ5) term by term.  The inequalities in (A.P1) show
that \(\mathfrak B_u\) is continuous and coercive on \(\mathcal V_u\).
Lax--Milgram therefore makes
\(\mathfrak J_u:\mathcal V_u\to\mathcal V_u^*\) an isomorphism, with inverse
norm at most one in the derivative norm.

\paragraph{Smooth global branch.}
In a path chart, write the first-order condition as
\[
  \mathscr F(u,\gamma)=D_\gamma\mathcal A_u(\gamma)=0.
\]
Its path derivative at \(\gamma_u\) is \(\mathfrak J_u\), hence invertible.
The Hilbert implicit-function theorem produces a local \(C^r\) critical
branch around every \(u\in\mathcal U_0\).  Strict convexity makes every
critical path the unique global minimizer, so local branches agree on overlaps
and form one \(C^r\) map on the whole cube neighborhood.

\paragraph{Weak and strong Green--Jacobi response.}
Differentiating \(\mathscr F(u,\gamma_u)=0\) in coordinate \(u_i\) gives
\[
  \mathfrak J_u(\partial_{u_i}\gamma_u)+\mathfrak f_i^u=0,
\]
which proves (MJ6).  For smooth \(\xi,\eta\), covariant integration by parts
in (MJ4) gives
\begin{equation}
\begin{aligned}
  \mathfrak B_u(\xi,\eta)
  ={}&\int_0^T\ip{J_u\xi}{\eta}\dd t\\
  &+\ip{D_t\xi(T)+\operatorname{Hess}\Phi_u\,\xi(T)}{\eta(T)}.
\end{aligned}
\tag{A.P3}
\end{equation}
The closed coercive form corresponds to the Friedrichs self-adjoint operator
with homogeneous conditions
\(\xi(0)=0\) and
\(D_t\xi(T)+\operatorname{Hess}\Phi_u\,\xi(T)=0\).  Comparing
\(\mathfrak B_u(\chi_i^u,\eta)=\mathfrak f_i^u[\eta]\) with (A.P3), first for
compactly supported tests and then for arbitrary endpoint traces, yields the
bulk equation and nonhomogeneous Robin condition in (MJ7).  One-dimensional
elliptic regularity gives the unique classical response.  This also identifies
the weak inverse, Friedrichs resolvent, and Green response.

\paragraph{Value curvature and reciprocity.}
The envelope identity gives
\[
  \partial_{u_i}\mathcal V=\Delta_i(\gamma_u).
\]
Differentiating and using (MJ6) yields
\[
  \partial_{u_i u_j}^2\mathcal V
  =-\mathfrak f_i^u[\chi_j^u]
  =-\mathfrak B_u(\chi_i^u,\chi_j^u).
\]
Symmetry of \(\mathfrak B_u\) proves reciprocity and (MJ8).  Applying the
identity to a linear combination of interventions gives the negative
semidefinite Gram representation.  Under the Riesz identification, the
spectral bounds
\[
  \norm{\mathfrak J_u}^{-1}I
  \preceq\mathfrak J_u^{-1}
  \preceq\norm{\mathfrak J_u^{-1}}I
\]
give (MJ8a).  Since \(\mathfrak J_u^{-1}\) is strictly positive,
\(\langle f,\mathfrak J_u^{-1}f\rangle=0\) holds exactly when \(f=0\).
This proves the kernel and rank identities in (MJ8b).

\paragraph{Boolean integration.}
For a smooth scalar function, the coordinate difference satisfies
\[
  f(u+e_i)-f(u)=\int_0^1\partial_{u_i}f(u+se_i)\dd s
\]
when \(u_i=0\).  Iterating this identity over \(i\in S\) proves (MJ9).
Substitution of (MJ8) proves (MJ10).  Starting the same integration from
\(\mathbf1_C\) proves the conditional-face formula.
\end{proof}

\begin{lemma}[Smoothness of the mechanical action in a path chart]
\label{lem:path-action-smoothness}
Let \(\bar\gamma\in H^1([0,T],\mathcal N)\), and use a smooth exponential
path chart \(\gamma=\exp_{\bar\gamma}\xi\) on an \(H^1\)-neighborhood of
\(\bar\gamma\).  Suppose that the metric coefficients are smooth and that a
bulk potential \(W_u(t,x)\) is continuous in \(t\), \(C^k\) in \((u,x)\),
and has state and parameter derivatives through order \(k\) locally uniform
in \(t\).  Then
\[
  (u,\xi)\longmapsto
  \int_0^T\left(\frac12\norm{\dot\gamma}^2
  +W_u(t,\gamma(t))\right)\dd t
\]
is \(C^{k-1}\) on a sufficiently small path-chart neighborhood.  A
\(C^k\) terminal potential composed with the endpoint trace has the same
regularity.  Its derivatives are the variational derivatives obtained by
differentiating under the integral and at the endpoint.
\end{lemma}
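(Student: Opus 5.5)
The plan is to pull everything back to a fixed reference path and apply the finite-order chain rule to the two composed maps, the endpoint trace and the Nemytskii operator of the potential. Take the exponential chart \(\gamma=\exp_{\bar\gamma}\xi\) with \(\xi\in H^1(\bar\gamma^*T\mathcal N)\), and trivialize \(\bar\gamma^*T\mathcal N\) by parallel transport, so that \(\xi\) becomes an \(\R^n\)-valued \(H^1\) function. The map \((t,v)\mapsto\exp_{\bar\gamma(t)}v\) is smooth in \(v\) with derivatives continuous in \(t\). The image of \(\bar\gamma\) is compact, and so is any small neighborhood of it. Hence all \(v\)-derivatives of the exponential, and of the metric coefficients along it, are uniformly bounded on \(\{\norm{v}\le\varepsilon\}\). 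The one-dimensional Sobolev embedding \(H^1\hookrightarrow C^0\) guarantees that \(\norm{\xi}_{H^1}\) small implies \(\sup_t\norm{\xi(t)}\le\varepsilon\). Thus the chart is defined on an \(H^1\)-ball.

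\textbf{Step 1: kinetic term.} In the trivialization,
\[
  \dot\gamma=\partial_t E(t,\xi)+\partial_v E(t,\xi)\,\dot\xi ,
\]
where \(E(t,v)=\exp_{\bar\gamma(t)}v\). The factor \(\partial_tE\) contains \(\dot{\bar\gamma}\in L^2\) linearly, through the Jacobi-type derivative of the exponential in its base point. So \(\norm{\dot\gamma}^2\) is a quadratic form in \((\dot{\bar\gamma},\dot\xi)\in L^2\times L^2\), and its coefficients are smooth functions of \((t,\xi(t))\). A map \(\xi\mapsto a(t,\xi(t))\) with \(a\) smooth in \(v\) is \(C^\infty\) from \(C^0\) into \(C^0\), because uniform bounds on derivatives give Taylor remainders uniformly in \(t\). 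Pairing such coefficients with an \(L^2\times L^2\) bilinear expression is then \(C^\infty\). Therefore the kinetic term is \(C^\infty\) in \(\xi\) and independent of \(u\).

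\textbf{Step 2: potential term (main obstacle).} Set \(F(t,u,v)=W_u(t,E(t,v))\). This is continuous in \(t\) and \(C^k\) in \((u,v)\), and its derivatives through order \(k\) are uniformly continuous on \([0,T]\times\) (compact set). Here I use the local uniformity hypothesis together with the bounds from the exponential. I then show that the Nemytskii map \((u,\xi)\mapsto\int_0^TF(t,u,\xi(t))\,\dd t\) is \(C^{k-1}\) from \(\R^m\times C^0\). The candidate \(j\)-th derivative is
\[
  (h_1,\dots,h_j)\mapsto\int_0^T D^jF(t,u,\xi)[h_1,\dots,h_j]\,\dd t .
\]
For \(j\le k-1\), the remainder in its Taylor expansion involves \(D^{j+1}F\) along a segment. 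That remainder is \(o(\norm{h}_{C^0})\) uniformly in \(t\), since \(D^{j+1}F\) is bounded (\(j+1\le k\)). Continuity of \(D^{k-1}\) in operator norm follows from uniform continuity of \(D^{k-1}F\), and in fact from boundedness of \(D^kF\). The one order of loss comes exactly from needing the next derivative uniformly bounded to control the remainder. The delicate point is that all estimates must be uniform in \(t\), which is why only local uniformity is needed rather than smoothness in \(t\). Composing with the continuous linear inclusion \(H^1\hookrightarrow C^0\) gives the claim on the path-chart neighborhood.

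\textbf{Step 3: terminal term and derivative formulas.} The trace \(\xi\mapsto\xi(T)\) is bounded linear from \(H^1\) to \(\R^n\). So \(\Phi_u(E(T,\xi(T)))\) is a \(C^k\) function of finitely many variables composed with a linear map, hence \(C^k\supseteq C^{k-1}\). Finally, the derivative formulas of Steps 1–2 are by construction the integrals of pointwise derivatives. Hence they coincide with the variational derivatives obtained by differentiating under the integral and at the endpoint. Rewriting them covariantly via \(D\exp\) recovers the expressions (MJ4) used in the proof of \cref{thm:global-mobius-jacobi}.
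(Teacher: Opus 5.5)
Your proposal is correct and follows essentially the same route as the paper. The embedding \(H^1\hookrightarrow C^0\) confines a small chart ball to a compact tube, the kinetic term is bounded Nemytskii coefficients paired with at most two \(L^2\) velocity factors, the potential is handled by the same Nemytskii estimate, and the terminal term is a finite-dimensional composition with the continuous endpoint trace. Using a \(C^0\)-valued Nemytskii operator in a parallel frame instead of the \(H^1\) Banach-algebra estimate, and observing that the kinetic term is actually \(C^\infty\), are harmless refinements. One small correction: when you differentiate the \(j\)-th derivative, the \(o(\norm{h}_{C^0})\) remainder comes from uniform continuity of \(D^{j+1}F\), or from boundedness of \(D^{j+2}F\), not from mere boundedness of \(D^{j+1}F\).
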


\begin{proof}
The embedding \(H^1([0,T])\hookrightarrow C^0([0,T])\) is continuous, and
one-dimensional \(H^1\) is a Banach algebra.  A small chart ball therefore
maps into one compact tube around \(\bar\gamma([0,T])\).  On that tube, all
coefficient derivatives used below are uniformly bounded.  In local
coordinates the kinetic integrand is
\(g_{ab}(\gamma)\dot\gamma^a\dot\gamma^b/2\).  Repeated differentiation
produces finite sums of bounded Nemytskii coefficients multiplied by at most
two \(L^2\) velocity factors and by \(H^1\) variations.  H\"older's inequality
and the algebra estimate make every derivative through order \(k-1\)
continuous in the \(H^1\) norm.  The potential terms follow from the same
Nemytskii estimate with no velocity factors.  Finally, endpoint evaluation is
a continuous linear map from \(H^1\) to a finite-dimensional tangent space,
so ordinary finite-dimensional composition treats the terminal term.
\end{proof}

\begin{proof}[Proof of \cref{thm:hard-target-jacobi}]
Work in a smooth path chart around \(\gamma_0\) and identify nearby
fixed-initial fluctuation spaces with one Hilbert space \(\mathcal V\).  The
KKT map is
\[
  \mathscr F(u,\gamma,\lambda)
  =\bigl(D_\gamma\mathscr L(u,\gamma,\lambda),
  c_u(\gamma(T))\bigr).
\]
Its derivative in \((\gamma,\lambda)\) at the reference point is precisely
\(\mathfrak K_0\) in \textup{(HT3)}.

We first prove that this derivative is invertible.  Coercivity makes
\(\mathfrak J_0:\mathcal V\to\mathcal V^*\) a positive self-adjoint
isomorphism.  Since \(\mathfrak B_0\ne0\),
\[
  \sigma_0
  =\mathfrak B_0\mathfrak J_0^{-1}\mathfrak B_0^*>0.
\]
For prescribed \((f,a)\in\mathcal V^*\times\R\), the system
\[
  \mathfrak J_0\xi+\mathfrak B_0^*\beta=f,
  \qquad \mathfrak B_0\xi=a
\]
has the unique solution
\[
  \beta=\sigma_0^{-1}
  \bigl(\mathfrak B_0\mathfrak J_0^{-1}f-a\bigr),
  \qquad
  \xi=\mathfrak J_0^{-1}(f-\mathfrak B_0^*\beta).
\]
Hence \(\mathfrak K_0\) is an isomorphism.  The Hilbert implicit-function
theorem gives the unique \(C^r\) KKT branch.  Strict positivity of the
multiplier and coercivity persist after shrinking the parameter
neighborhood.

For a feasible path \(\gamma\) near \(\gamma_u\), Taylor expansion of the
Lagrangian and stationarity give
\[
  \mathcal A_u(\gamma)-\mathcal A_u(\gamma_u)
  =\mathscr L(u,\gamma,\lambda_u)
   -\mathscr L(u,\gamma_u,\lambda_u)
   -\lambda_u c_u(\gamma(T)).
\]
The first difference is positive quadratic to leading order by coercivity,
and the last term is nonnegative because \(\lambda_u>0\) and
\(c_u(\gamma(T))\le0\).  After one uniform neighborhood reduction,
\(\gamma_u\) is therefore the unique constrained local minimizer there.

Differentiating \(\mathscr F(u,\gamma_u,\lambda_u)=0\) gives
\[
  \mathfrak K_u\partial_{u_i}(\gamma_u,\lambda_u)
  +\mathfrak b_i^u=0,
\]
which proves \textup{(HT4)}.  Because the active constraint vanishes,
\[
  \mathcal V_{\rm hard}(u)
  =\mathscr L(u,\gamma_u,\lambda_u).
\]
The envelope identity and a second differentiation yield
\[
  \partial_{u_i u_j}^2\mathcal V_{\rm hard}
  =\partial_{u_i u_j}^2\mathscr L
  +\left\langle
  \mathfrak b_i^u,\partial_{u_j}(\gamma_u,\lambda_u)
  \right\rangle.
\]
Substitution of \textup{(HT4)} proves \textup{(HT5)}.  Self-adjointness of
the bordered map gives reciprocity.  If both data are affine in \(u\), the
direct second derivative is zero; applying the coordinate fundamental theorem
of calculus twice on the indicated face proves \textup{(HT6)}.
\end{proof}

\begin{proof}[Proof of \cref{cor:all-order-mobius-jacobi}]
Apply the set-indexed multivariate Faa di Bruno formula to
\(\mathscr F(u,\gamma_u)=0\).  Since \(\mathcal A_u\), and hence
\(\mathscr F\), is affine in \(u\), at most one index can act on the explicit
\(u\)-argument.  The resulting expansion is
\[
  0=\sum_{\substack{S\subseteq I,\ |S|\le1\\
  \pi\in\Pi(I\setminus S)}}
  D_u^S D_\gamma^{|\pi|}\mathscr F
  \bigl[\eta_B^u\bigr]_{B\in\pi}.
\]
The unique term containing the highest derivative \(\eta_I^u\) corresponds to
\(S=\varnothing\), \(\pi=\{I\}\), and equals
\(\mathfrak J_u\eta_I^u\).  Moving all other terms to the right and applying
\(\mathfrak J_u^{-1}\) proves (MJ11)--(MJ12).  Thus every derivative is
generated recursively from lower-order derivatives by the same Green inverse.

For completeness, applying the same partition formula to the scalar composite
\(\mathcal A_u(\gamma_u)\) gives
\begin{equation}
  \partial_I^{|I|}\mathcal V
  =\sum_{\substack{S\subseteq I,\ |S|\le1\\
  \pi\in\Pi(I\setminus S)}}
  D_u^S D_\gamma^{|\pi|}\mathcal A_u
  \bigl[\eta_B^u\bigr]_{B\in\pi},
  \tag{A.P4}
\end{equation}
where the term \(D_\gamma\mathcal A_u[\eta_I^u]\) vanishes by stationarity.
Equations (A.P4) and (MJ9) give every higher-order Boolean effect.

For three distinct indices, differentiate
\(\mathfrak J_u\partial_{u_j}\gamma_u+D\Delta_j=0\) in \(u_k\), use the
result to eliminate \(\partial_{u_j u_k}^2\gamma_u\) from the second derivative
of \(\partial_{u_i}\mathcal V=\Delta_i(\gamma_u)\), and substitute
\(\partial_{u_l}\gamma_u=-\chi_l\).  Symmetry of \(\mathfrak J_u\) cancels
the second path-response terms and leaves exactly (MJ13).
\end{proof}

\begin{proof}[Proof of \cref{cor:spd-mobius-jacobi}]
The AIRM cone is Hadamard, and a structured realization satisfying the stated
hypotheses has the same intrinsic nonpositive-curvature property.  The
information potentials supply (MJ2), so
\cref{thm:global-mobius-jacobi,cor:all-order-mobius-jacobi} apply directly.
The displayed operator is (MJ7) with the AIRM connection and curvature.  Its
curvature term is nonnegative in the index form, and (MJ10) gives the pair
formula.  The final scope statement records exactly where the classical
Hessian used by the theorem ceases to exist.
\end{proof}

\begin{proof}[Proof of \cref{thm:length-energy-gauge}]
For every finite-energy path, Cauchy--Schwarz gives
\[
  \frac12\int_0^T\norm{\dot\gamma_t}^2\dd t
  \ge
  \frac1{2T}\left(\int_0^T\norm{\dot\gamma_t}\dd t\right)^2
  =\frac{\Len(\gamma)^2}{2T}
  \ge\frac{D(x,\mathcal C)^2}{2T}.
\]
Thus \(\mathcal E_T\ge D^2/(2T)\).  If \(D<+\infty\), choose paths with
length converging to \(D\) and reparameterize each at constant speed.  The
reparameterized paths remain admissible and have energy
\(\Len(\gamma)^2/(2T)\), proving the reverse inequality after taking the
limit.  If \(D=+\infty\), a finite-energy path would have finite length by
Cauchy--Schwarz, a contradiction, so both sides are \(+\infty\).

Equality in Cauchy--Schwarz holds exactly when the metric speed is constant
almost everywhere.  Equality in the second inequality holds exactly for a
length minimizer.  These two equality conditions characterize the energy
minimizers and prove the RDGC specialization.
\end{proof}

\begin{proof}[Proof of \cref{cor:path-bellman}]
Apply \cref{thm:path-dpp} to \(c_{s,T}(x,z)\), add \(\Gamma(z)\), and regroup
the two successive infima:
\[
\begin{aligned}
V(s,x)
&=\inf_z\inf_y
\{c_{s,r}(x,y)+c_{r,T}(y,z)+\Gamma(z)\}\\
&=\inf_y\{c_{s,r}(x,y)+V(r,y)\}.
\end{aligned}
\]
\end{proof}

\begin{proof}[Proof of \cref{thm:path-tonelli}]
Take a minimizing sequence with uniformly bounded objective.  The lower bound
on \(\overline L\) and the lower bound on \(\Gamma\) give a
uniform \(L^p\) bound on path speeds.  Since \(p>1\), H\"older's inequality
gives uniform equicontinuity.  The common initial point and the speed bound
confine every path to a common closed bounded ball.  Finite-dimensional
Hadamard manifolds are proper, so Arzel\`a--Ascoli gives a uniformly convergent
subsequence.  The corresponding velocities converge weakly after passage to
  local charts and a further subsequence.  Sequential viability keeps the limit
  admissible.  More explicitly, subdivide the compact limiting path into
  finitely many intervals whose images lie in normal coordinate charts.  For
  all sufficiently large indices the approximating segments lie in the same
  charts; their coordinate paths converge strongly and uniformly, while their
  coordinate velocities converge weakly in \(L^p\).  On each subinterval the
  normal-integrand hypothesis and convexity in velocity give weak lower
  semicontinuity of the integral functional
  \cite[Chapter~3]{dacorogna2008direct}.  Chart-boundary times form a finite
  measure-zero set, so summing the chartwise inequalities gives the global
  lower bound.  Therefore
\[
  \mathcal A(\gamma)
  \le
  \liminf_n\mathcal A(\gamma_n),
\]
and lower semicontinuity of \(\Gamma\) treats the endpoint.  Hence the limit
attains the value.

If two different minimizing paths existed, their pointwise geodesic
interpolation would be admissible and strict geodesic convexity would give a
strictly smaller objective, a contradiction.
\end{proof}

\begin{proof}[Proof of \cref{prop:path-hjb}]
The Bellman identity on \([t,t+h]\) gives
\[
  V(t,G)=
  \inf_{a_{[t,t+h]}}
  \left\{
  \int_t^{t+h}\ell(s,\gamma_s,a_s)\,\dd s
  +V(t+h,\gamma(t+h))
  \right\}.
\]
  Put
  \[
    q_\varphi(s,X,a)
    =\ell(s,X,a)+\dd_X\varphi(s,X)[b(s,X,a)].
  \]
  Let a smooth test function \(\varphi\) touch \(V\) from above at \((t,G)\).
  For every constant control \(a\in\mathcal U\), the Bellman inequality and the
  local maximum of \(V-\varphi\) give
  \[
    0\le
    \int_t^{t+h}\ell(s,\gamma_s,a)\,\dd s
    +\varphi(t+h,\gamma(t+h))-\varphi(t,G).
  \]
  Uniform local Lipschitz continuity of \(b\) gives
  \(\gamma(t+h)=\exp_G(hb(t,G,a)+o(h))\), uniformly in \(a\).  Divide by
  \(h\), let \(h\downarrow0\), and use compactness of \(\mathcal U\).  The
  inequality holds for every \(a\), hence
  \[
    -\partial_t\varphi(t,G)
    +\mathcal H(t,G,-\dd_G\varphi(t,G))\le0,
  \]
  which is the viscosity subsolution inequality for the sign convention in
  (P4).

  If \(\varphi\) touches \(V\) from below, choose a measurable control whose
  short-horizon cost is within \(o(h)\) of the Bellman infimum.  The local
  minimum of \(V-\varphi\) then yields
  \[
    o(h)\ge
    \int_t^{t+h}\ell(s,\gamma_s,a_s)\,\dd s
    +\varphi(t+h,\gamma(t+h))-\varphi(t,G).
  \]
  Along this trajectory, the chain rule rewrites the right side as
  \[
    \int_t^{t+h}
    \bigl[\partial_s\varphi(s,\gamma_s)
    +q_\varphi(s,\gamma_s,a_s)\bigr]\,\dd s.
  \]
  The trajectory remains uniformly close to \(G\) on the shrinking interval.
  Uniform continuity on the resulting compact neighborhood gives, pointwise in
  the measurable control,
  \[
    q_\varphi(s,\gamma_s,a_s)
    \ge \inf_{a\in\mathcal U}q_\varphi(t,G,a)-o(1).
  \]
  After division by \(h\), this proves
  \[
    -\partial_t\varphi(t,G)
    +\mathcal H(t,G,-\dd_G\varphi(t,G))\ge0.
  \]
  No closure or convexification of averaged velocity--cost pairs is used.
  Continuity of \(V\) and \(\Gamma\) supplies the terminal condition.  At a
  differentiability point, the two inequalities coincide.  The feedback
  formula uses an attaining maximizer, which exists by compactness.  If
  comparison holds in the declared class, standard comparison gives
  uniqueness.
\end{proof}

\begin{proof}[Proof of \cref{thm:path-schur}]
The first-order condition is
\[
  D_\eta\mathfrak A(z,\eta_\star(z))=0.
\]
Coercivity and self-adjointness imply that \(\Jac:\Hilb_0\to\Hilb_0\) is
invertible; equivalently one may apply Lax--Milgram to the second-variation
form.  The Hilbert-space implicit-function theorem therefore produces a unique
\(C^2\) critical section near \((z_0,\eta_0)\).  Coercivity persists after
shrinking the neighborhood, so this section remains the unique local
minimizer.  Differentiating the first-order condition gives
\[
  \mathfrak A_{\eta z}+\Jac D\eta_\star=0,
\]
which proves (P5).

The envelope identity is
\[
  D\Val
  =
  \mathfrak A_z+\mathfrak A_\eta D\eta_\star
  =
  \mathfrak A_z.
\]
Differentiating again and substituting (P5) gives
\[
  D^2\Val
  =
  \mathfrak A_{zz}
  -\mathfrak A_{z\eta}\Jac^{-1}\mathfrak A_{\eta z},
\]
which is (P6).
\end{proof}

\begin{proof}[Proof of \cref{cor:rdgc-jacobi-bridge}]
The classical second-variation formula for kinetic energy along a geodesic
with fixed endpoints is the displayed index form.  Since \(\F\) is totally
geodesic, its intrinsic connection and curvature are the restrictions used by
that formula.  Nonpositive sectional curvature gives
\[
  \ip{R(\dot\gamma_0,h)h}{\dot\gamma_0}\le0,
\]
and hence
\(I(h,h)\ge\int_0^T\norm{D_th}^2\dd t\).  The derivative seminorm is a norm
on \(H_0^1\), equivalent to its standard Sobolev norm by Poincare's inequality.
Thus the Riesz representative of \(I\) is bounded, self-adjoint, and coercive.
The length--energy identification and the final sensitivity claim now follow
from \cref{thm:length-energy-gauge,thm:path-schur}.
\end{proof}

\begin{proof}[Proof of \cref{cor:path-schur-associativity}]
Let \(q(\delta z,\delta\eta_1,\delta\eta_2)\) be the quadratic form defined by
the complete Hessian.  Coercivity gives unique quadratic minimizers.  Hence
\[
  \inf_{\delta\eta_1,\delta\eta_2}q
  =
  \inf_{\delta\eta_1}
  \left(\inf_{\delta\eta_2}q\right).
\]
Each quadratic elimination is represented by the corresponding Schur
complement.  Equality for every \(\delta z\) proves (P7).
\end{proof}

\begin{proof}[Proof of \cref{cor:path-interaction}]
Affineness in \(u\) gives
\[
  \mathfrak A_{uu}=0,
  \qquad
  \mathfrak A_{\eta u}=\mathcal G.
\]
The \(u\)-block of (P6) is therefore
\[
  D^2_{uu}\Val=-\mathcal G^*\Jac^{-1}\mathcal G.
\]
For any \(a\in\R^m\),
\[
  \ip{a}{D^2_{uu}\Val\,a}
  =
  -\ip{\mathcal Ga}{\Jac^{-1}\mathcal Ga}\le0,
\]
which proves (P8) and (P9).  Applying the one-dimensional fundamental theorem
of calculus successively in coordinates \(u_i\) and \(u_j\) gives
\[
  \Val(e_i+e_j)-\Val(e_i)-\Val(e_j)+\Val(0)
  =
  \int_0^1\!\int_0^1
  \partial^2_{u_i u_j}\Val(se_i+te_j)\,\dd s\,\dd t.
\]
Substitution of (P9) proves (P10).
\end{proof}

%% file: appendix/b_spd_diagonal_proofs.tex
\section{Proofs for the SPD Specialization and Diagonal Model}
\label{app:spd-diagonal-proofs}

\begin{proof}[Proof of \cref{thm:full-spd-benchmark}]
Let \(S_1\in\Ck\), and let \(z_1\le\cdots\le z_d\) be its ordered
log-eigenvalues.  The affine-invariant distance satisfies the spectral lower
bound
\[
  d_{\AI}(S_0,S_1)\ge\norm{y-z}_2
\]
\cite{bhatia2007positive}.  Since \(S_1\in\Ck\), the vector \(z\) has
width at most \(\log K\), so \(z_i\in[c,c+\log K]\) for some \(c\).
Therefore
\[
  d_{\AI}(S_0,S_1)^2
  \ge
  \sum_{i=1}^d\dist(y_i,[c,c+\log K])^2.
\]

Conversely, fix \(c\) and set
\[
  z_i(c)=\Pi_{[c,c+\log K]}(y_i).
\]
Choose \(S_1\) with the same eigenvectors as \(S_0\) and log-eigenvalues
\(z_i(c)\).  Then \(S_1\in\Ck\), the matrices commute, and
\(d_{\AI}(S_0,S_1)=\norm{y-z(c)}_2\).  Minimizing over \(c\) proves the
formula.
\end{proof}

\begin{proof}[Proof of \cref{cor:hard-condition-response}]
Simplicity of the extreme generalized eigenvalues makes \(c_u|_{\F}\)
smooth near \((u_0,q_0)\).  For each fixed \(u\), geodesic convexity of the
log-condition number makes \(\mathcal T_u\) geodesically convex.  The AIRM
projection is therefore unique, and the fixed-horizon kinetic minimizer is the
constant-speed geodesic to that projection by
\cref{prop:rdgc-target-projection,thm:length-energy-gauge}.

Since \(G_0\) lies outside the target, the endpoint constraint is active.
Regularity of its restricted differential gives LICQ.  Projection
transversality has the form
\[
  \dot\gamma_{u_0}(T)
  +\lambda_0\operatorname{grad}^{\AI}(c_{u_0}|_{\F})(q_0)=0.
\]
The multiplier is positive: if it vanished, the terminal velocity and hence
the constant speed would vanish, contradicting
\(G_0\notin\mathcal T_{u_0}\).

The second variation of the endpoint Lagrangian is
\[
  \int_0^T\left(
  \norm{D_t\xi}^2
  -\ip{R^{\AI}(\xi,\dot\gamma)\dot\gamma}{\xi}
  \right)\dd t
  +\lambda_0\operatorname{Hess}^{\AI}(c_{u_0}|_{\F})
  [\xi(T),\xi(T)].
\]
Nonpositive curvature and geodesic convexity make the last two contributions
nonnegative, so this form is coercive in the fixed-initial derivative norm.
All hypotheses of \cref{thm:hard-target-jacobi} hold.  Its implicit branch is
the unique global projection branch because the endpoint objective is
strictly convex and \(\mathcal T_u\) is convex.  The value identity follows
from the length--energy gauge, and the response and Hessian formulas follow
from \textup{(HT4)}--\textup{(HT6)}.
\end{proof}

\begin{proof}[Proof of \cref{prop:rdgc-target-projection}]
The function \(G\mapsto\log\kappa(G)\) is geodesically convex on the SPD
cone, so \(\Ck\) is closed and geodesically convex.  Hence
\(\mathcal T_{K,\F}(H)\) is closed and geodesically convex.  A nonempty
closed geodesically convex subset of a finite-dimensional Hadamard manifold
has a unique metric projection.  The geodesic from \(S_0\) to this projection
stays inside \(\mathcal S_\F(H)\) and has length equal to the metric
distance; every admissible curve has length at least the distance between its
endpoints.  This proves the RDGC identity and uniqueness.

For a remaining horizon \(h=T-t\), Cauchy--Schwarz gives
\[
  \frac12\int_t^T\norm{\dot\gamma_s}^2\,\dd s
  \ge
  \frac{\Len(\gamma)^2}{2h}
  \ge
  \frac{d_{\AI}(S,\mathcal T_{K,\F}(H))^2}{2h}.
\]
The constant-speed geodesic to the projection attains equality.
\end{proof}

\begin{proof}[Proof of \cref{prop:spd-spectral-convexity}]
For \(A,B\in\SPD^d\), \(t\in[0,1]\), and \(1\le k\le d\), exterior powers
commute with the weighted geometric mean.  Monotonicity of that mean yields
\[
  \prod_{i=1}^k\lambda_i^\downarrow(A\#_tB)
  \le
  \prod_{i=1}^k
  \lambda_i^\downarrow(A)^{1-t}
  \lambda_i^\downarrow(B)^t.
\]
Equality holds for \(k=d\).  After taking logarithms,
\begin{equation}
  \log\lambda(A\#_tB)
  \prec
  (1-t)\log\lambda(A)+t\log\lambda(B).
  \tag{B.1}
\end{equation}

Fix \(H\succ0\), let \(G_t=G_0\#_tG_1\), and set
\(A_i=H^{-1/2}G_iH^{-1/2}\).  Congruence is an affine-invariant isometry and
preserves geometric means.  The relative eigenvalues
\(G^{-1/2}HG^{-1/2}\) are the reciprocals of those of
\(H^{-1/2}GH^{-1/2}\).  Multiplication by \(-1\) preserves majorization, and
a convex permutation-invariant function is Schur convex.  Applying
\textup{(B.1)} and ordinary convexity of \(\phi\) gives
\[
  \Omega_{\phi,H}(G_t)
  \le
  (1-t)\Omega_{\phi,H}(G_0)
  +t\Omega_{\phi,H}(G_1).
\]
Coordinatewise monotonicity followed by convexity of \(\rho\) proves the
claim for \(\mathcal J\).  Closedness follows from continuity of the
finite-dimensional convex spectral functions.  Finally,
\[
  \log\lambda((cG)^{-1/2}H(cG)^{-1/2})
  =
  \log\lambda(G^{-1/2}HG^{-1/2})-(\log c)\mathbf1,
\]
which proves scale invariance.
\end{proof}

\begin{proof}[Proof of \cref{thm:2d-diag}]
For \(D(u)=\diag(e^{u/2},e^{-u/2})\),
\[
  \log D(u)=\diag(u/2,-u/2),
\]
so the induced line element is \(\dd s^2=\frac12\,\dd u^2\).
Let
\[
  A(u)=D(u)^{-1/2}HD(u)^{-1/2}.
\]
Because \(\det D(u)=1\), one has \(\det A(u)=\det H\).  If the eigenvalues
of \(A(u)\) are \(\mu\le\nu\), the condition \(\nu/\mu\le K\) with
fixed product \(\mu\nu=\det H\) is equivalent to
\[
  \mu+\nu
  \le
  \left(\sqrt K+\frac1{\sqrt K}\right)\sqrt{\det H}
  =R_K.
\]
Moreover,
\[
  \tr A(u)=H_{11}e^{-u/2}+H_{22}e^{u/2}.
\]
With \(x=e^{u/2}\), the target condition is
\[
  H_{22}x^2-R_Kx+H_{11}\le0.
\]
Below \(K_{\diag}^\ast(H)\) the feasible set is empty.  Otherwise its roots
are \(x_\pm(K)\), so the feasible set in \(u\)-coordinates is
\([2\log x_-(K),2\log x_+(K)]\).  The one-dimensional line element gives
the claimed distance.
\end{proof}

\begin{proof}[Proof of \cref{thm:diag-forced-response}]
Write every admissible path as
\[
  v(t)=v_0(t)+\eta(t),
  \qquad
  v_0(t)=u_0+\frac{q-u_0}{T}t,
  \qquad
  \eta\in H_0^1(0,T).
\]
The vertical second variation is
\[
  D_{vv}^2\mathfrak A[h,k]
  =\frac12\int_0^T\dot h(t)\dot k(t)\,\dd t,
\]
which is coercive on \(H_0^1(0,T)\).  The weak Euler--Lagrange equation is
\[
  \frac12\int_0^T\dot v_a\dot h\,\dd t
  =
  a_1\int_0^T h\,\dd t
  +a_2\int_0^T\frac{t}{T}h\,\dd t.
\]
The affine path \(v_0\) has zero weak second derivative, while
\[
  -\frac12w_1''=1,
  \qquad
  -\frac12w_2''=\frac{t}{T},
  \qquad
  w_i(0)=w_i(T)=0.
\]
Thus the displayed \(v_a\) is the unique global minimizer.

At \(a=0\),
\[
  \mathfrak A(v_0;0)
  =\frac{(q-u_0)^2}{4T}
  =\frac{D_{K,\diag}^{2D}(u_0;H)^2}{2T}.
\]
The envelope identity gives
\[
  \partial_{a_i a_j}^2\Val
  =-\int_0^T f_i(t)w_j(t)\,\dd t,
  \qquad
  f_1(t)=1,
  \quad f_2(t)=t/T.
\]
Direct integration yields
\[
  \int_0^T w_1\,\dd t=\frac{T^3}{6},
  \qquad
  \int_0^T w_2\,\dd t
  =\int_0^T\frac{t}{T}w_1\,\dd t=\frac{T^3}{12},
\]
and
\[
  \int_0^T\frac{t}{T}w_2\,\dd t=\frac{2T^3}{45}.
\]
The determinant of the positive matrix in \textup{(D2)} is \(1/2160\), so
it is positive definite and \(D_a^2\Val\prec0\).
\end{proof}

\begin{proof}[Proof of \cref{thm:diag-moving-hard-target}]
For the diagonal Hessian and metric in the theorem,
\[
  D(v)^{-1/2}H_uD(v)^{-1/2}
  =\diag\left(e^{(\rho(u)-v)/2},e^{(v-\rho(u))/2}\right).
\]
Hence
\[
  \log\kappa_{\rm gen}(H_u,D(v))=|v-\rho(u)|,
\]
and the hard target in the \(v\)-coordinate is the interval
\([\rho(u)-\log K,\rho(u)+\log K]\).  Assumption \textup{(D3)} keeps
\(v_0\) strictly above this interval throughout \(\mathcal U_0\), so its
projection is the upper endpoint \(q_u\).  The unique kinetic minimizer is the
straight path in \textup{(D4)}, and direct integration gives the displayed
value.

On this active stratum use
\[
  c_u(v)=v-\rho(u)-\log K\le0.
\]
The endpoint first variation is
\(\frac12\dot v_u(T)\xi(T)\).  KKT transversality therefore reads
\[
  \frac12\dot v_u(T)+\lambda_u=0,
\]
which gives the positive multiplier in \textup{(D4)}.  Differentiating the
closed forms gives the path and multiplier responses in \textup{(D5)}.  Since
\(q_u\) is affine,
\[
  \partial_{u_i u_j}^2\mathcal V_{\rm hard}
  =\frac{b_i b_j}{2T}.
\]
This mixed derivative is constant on every Boolean face, so two applications
of the coordinate fundamental theorem of calculus prove \textup{(D6)}.
Finally, \(c_u\) is affine in \(u\), its endpoint differential is one, the
multiplier stays positive by \textup{(D3)}, and the kinetic Jacobi form is
coercive.  These are exactly the full-face hypotheses of
\cref{thm:hard-target-jacobi,cor:hard-condition-response}.
\end{proof}

\begin{proof}[Proof of \cref{prop:sequential-diagonal-rdgc}]
The AIRM line element in the log-coordinate chart is
\[
  \dd s^2
  =\dd x_1^2+\dd x_2^2+(\dd x_1+\dd x_2)^2.
\]
Along a sequential path this reduces to
\(\dd s=\sqrt2|\dd x_i|\) on every coordinate segment.  Hence every such
path from \(x\) to \(y\) has length at least
\[
  \sqrt2\left(
  |y_1-x_1|+|y_2-x_2|\right).
\]
Changing the first coordinate monotonically and then the second attains this
bound, proving the transition-distance formula in \textup{(D7)}.  The three
log-eigenvalues of \(D_3(y)\) are \(y_1,y_2,-y_1-y_2\), so its condition
target is exactly \(\mathcal H_k\).  Minimization of the transition distance
over that target proves the RDGC formula.

For \(x_0=(a,a)\), symmetry and convexity give a symmetric ambient AIRM
projection.  On the line \(y_1=y_2=s\), the target condition is
\(3|s|\le k\), so the projection is \(q=(k/3,k/3)\).  The same point minimizes
the \(\ell_1\) distance: for every \(y\in\mathcal H_k\),
\[
  2y_1+y_2\le k,
  \qquad
  y_1+2y_2\le k,
\]
and hence \(y_1+y_2\le2k/3\).  Since
\(|a-y_i|\ge a-y_i\),
\[
  \norm{x_0-y}_1
  \ge2a-y_1-y_2
  \ge2\left(a-\frac{k}{3}\right),
\]
with equality at \(q\).  Finally,
\[
  d_{\AI}(D_3(x_0),D_3(q))^2
  =2\left(a-\frac{k}{3}\right)^2
   +4\left(a-\frac{k}{3}\right)^2,
\]
which gives \textup{(D8)} and the strict ratio.
\end{proof}